\newtheorem{theorem}{Theorem}[section]
\newtheorem{lemma}[theorem]{Lemma}
\newtheorem{proposition}[theorem]{Proposition}
\newtheorem{definition}[theorem]{Definition}
\theoremstyle{remark}
\newtheorem{remark}[theorem]{Remark}
\newtheorem{example}[theorem]{Example}
\newcommand{\R}{\mathbb{R}}
\newcommand{\Q}{\mathbb{Q}}
\newcommand{\Z}{\mathbb{Z}}
\newcommand{\N}{\mathbb{N}}
\newcommand{\Idem}{\mathrm{Idem}}
\newcommand{\supp}{\mathrm{supp}}
\newcommand{\Lsc}{\mathrm{Lsc}}
\newcommand{\A}{\mathrm{A}}
\newcommand{\s}{\epsilon}
\newcommand{\Hom}{\mathrm{Hom}}
\newcommand{\Cu}{\mathrm{Cu}}
\begin{document}
\title{Cones of traces arising from AF C*-algebras}

\author{Mark Moodie}
\author{Leonel Robert}


\begin{abstract}
We characterize the topological non-cancellative cones that are expressible as projective limits of finite powers of $[0,\infty]$. These are also the cones of lower semicontinuous extended-valued traces on AF C*-algebras. Our main result may be regarded as a generalization of the fact that any Choquet simplex is a projective limit of finite dimensional simplices. To obtain our main result, we first establish a duality between certain non-cancellative topological cones and Cuntz semigroups with real multiplication. This duality extends the duality between compact convex sets and complete order unit vector spaces to a non-cancellative setting. 
\end{abstract}	

\maketitle

\section{Introduction}
By a theorem of Lazar and Lindenstrauss, any Choquet simplex can be expressed as a projective limit of finite dimensional simplices (see \cite{lazar-lindenstrauss}, \cite{EHS}). This has implications for C*-algebras: given a  Choquet simplex $K$, there exists a simple, unital,  approximately finite dimensional (AF)  C*-algebra whose set of tracial states is isomorphic to $K$ (\cite{blackadar, EHS}). In the investigations on the structure of a C*-algebra, another  kind of trace is also of interest, namely, the lower semicontinuous traces with values in $[0,\infty]$. These traces form a non-cancellative topological  cone. (By cone we understand an abelian monoid endowed with a scalar multiplication by positive scalars.) Our goal here is to characterize through intrinsic properties the topological cones arising  as the lower semicontinuous  $[0,\infty]$-valued traces on an AF C*-algebra. These are also the  projective limits of cones of the form $[0,\infty]^n$, with $n\in\N$, and also, the cones arising as the $[0,\infty]$-valued monoid morphisms on the positive elements of a  dimension group.

Let $A$ be C*-algebra. Denote its cone of positive elements by $A_+$. A map $\tau\colon A_+\to [0,\infty]$ 
is called a trace if it is linear (additive, homogeneous, mapping 0 to 0) and satisfies that $\tau(x^*x)=\tau(xx^*)$
for all $x\in A$. We are interested in the lower semicontinuous traces. Let $T(A)$ denote the cone of $[0,\infty]$-valued lower semicontinuous traces on $A_+$. By the results of \cite{ERS}, $T(A)$ is a complete lattice when endowed with the algebraic order,  and addition in $T(A)$ is distributive with respect to the lattice operations. Further, one can endow $T(A)$ with a topology that is locally convex, compact and Hausdorff. We call an abstract topological cone with these properties an \emph{extended Choquet cone} (see Section \ref{sectionECCs}). 

By an AF C*-algebra we understand an inductive limit, over a possibly uncountable index set, of finite dimensional C*-algebras. Not every extended Choquet cone arises as the cone of lower semicontinuous traces on an AF C*-algebra. The requisite additional properties are sorted out in the theorem below.

An element $w$ in a cone is called idempotent if $2w=w$. Given a cone $C$, we denote by $\Idem(C)$ the set of idempotent elements of $C$.

\begin{theorem}\label{mainchar}
Let $C$ be an extended Choquet cone (see Definition \ref{defExtCho}). The following are equivalent:
\begin{enumerate}[\rm (i)]
\item
$C$ is isomorphic to $T(A)$ for some AF C*-algebra $A$. 
\item
$C$ is isomorphic to $\mathrm{Hom}(G_+,[0,\infty])$ for some dimension group $(G,G_+)$. (Here 
$\mathrm{Hom}(G_+,[0,\infty])$ denotes the set of monoid morphisms from $G_+$ to $[0,\infty]$.) 

\item
$C$ is a projective limit of cones of the form $[0,\infty]^n$, $n\in \N$.

\item
$C$ has the following properties: 
\begin{enumerate}
\item $\Idem(C)$ is an algebraic lattice under the opposite algebraic order, 
\item for each $w\in \Idem(C)$, the set $\{x\in C:x\leq w\}$ is connected.
\end{enumerate} 
\end{enumerate}
Moreover, if $C$ is metrizable and satisfies (iv), then the C*-algebra in (i) may be chosen separable, the group $G$ in (ii) may be be chosen countable, and the projective limit in (iii) may be chosen over a countable index set.
\end{theorem}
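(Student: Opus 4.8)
The plan is to show the four conditions equivalent by proving the two constructive biconditionals (i)$\Leftrightarrow$(ii) and (ii)$\Leftrightarrow$(iii) together with the permanence implication (iii)$\Rightarrow$(iv), and then closing the loop with the single hard implication (iv)$\Rightarrow$(iii). Throughout I would lean on the duality between extended Choquet cones and Cuntz semigroups with real multiplication established earlier: writing $S\mapsto \Hom(S,[0,\infty])$ for the functor sending such a semigroup to its cone of functionals, and $C\mapsto(\Lsc\text{-affine functions on }C)$ for the reverse passage, this duality is contravariant and hence interchanges inductive limits with projective limits. The finite building blocks correspond under it: $[0,\infty]^n$ on the cone side is dual to the free object on $n$ generators, and one has $\Hom(\N^{n},[0,\infty])\cong[0,\infty]^{n}$.

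For (i)$\Leftrightarrow$(ii): if $A$ is AF then $K_0(A)$ is a dimension group $(G,G_+)$, and every lower semicontinuous $[0,\infty]$-valued trace is determined by its values on Murray--von Neumann classes of projections, which exhaust $G_+$; since these classes are sup-dense in $\Cu(A)$ one obtains a cone isomorphism $T(A)\cong\Hom(G_+,[0,\infty])$. Conversely, by Elliott's classification every dimension group is the $K_0$ of some AF algebra, and one reads the correspondence backwards. For (ii)$\Leftrightarrow$(iii): by the Effros--Handelman--Shen theorem a dimension group satisfies $G_+=\varinjlim \N^{n_i}$ with bonding maps given by nonnegative integer matrices; applying $\Hom(-,[0,\infty])$ yields $C\cong\varprojlim[0,\infty]^{n_i}$. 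For the converse I would use the real-multiplication envelope: a projective system of $[0,\infty]^{n_i}$'s dualizes to an inductive system of free finitely generated objects whose limit has the same functionals as a dimension group, the discrepancy between general $[0,\infty]$-valued bonding matrices and integer matrices being absorbed by rescaling coordinates and routing the $0$ and $\infty$ entries through the idempotent structure. Implication (iii)$\Rightarrow$(iv) is a permanence check: $\Idem([0,\infty]^n)=\{0,\infty\}^n$ is a finite Boolean lattice, and these assemble under projective limits into an algebraic lattice in the opposite order, while each set $\{x\le w\}$ in $[0,\infty]^n$ is a product of intervals, hence connected, and a projective limit of compact connected sets is connected.

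The crux is (iv)$\Rightarrow$(iii), the non-cancellative analogue of the Lazar--Lindenstrauss theorem. Here I would pass through the duality to the dual Cuntz semigroup $S$ and show that (iv)(a) and (iv)(b) force $S$ to be an inductive limit of free finitely generated objects, equivalently the real-multiplication envelope of a dimension group. The algebraic-lattice hypothesis (iv)(a) translates into the ideal lattice of $S$ being algebraic with the compact ideals matching the compact idempotents of $C$; this provides an exhaustion of $S$ by finitely generated ideals, hence the finite stages of a candidate system. The connectedness hypothesis (iv)(b) is what guarantees each finite stage is genuinely dual to some $[0,\infty]^n$ rather than a degenerate finite cone: I expect connectedness of $\{x\le w\}$ to rule out perforation and to force Riesz interpolation on each stage, since a divisibility gap or an interpolation failure would disconnect the corresponding face. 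Extracting from the abstract lattice and connectedness data an explicit cofinal projective system of continuous affine surjections $C\to[0,\infty]^{n}$, and proving the induced map into the limit is an isomorphism (points separated by the finite quotients, and completeness of the limit), is where I anticipate the main difficulty and where the bulk of the technical work --- the generalization of the Lazar--Lindenstrauss selection argument to the non-cancellative, extended-valued setting --- will reside.

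Finally, for the metrizability addendum I would track countability through each step. Metrizability of the compact cone $C$ is equivalent to separability of the dual function system, hence to $S$, and to the dimension group $G$, being countably generated; a countably generated dimension group is a sequential inductive limit $\varinjlim\N^{n_i}$, so the projective limit in (iii) may be taken over $\N$, the group in (ii) countable, and the AF algebra $A=\varinjlim F_i$ assembled from the finite-dimensional summands $F_i$ is then separable. The remaining task is to verify that the isomorphisms constructed above respect this bookkeeping, so that the four reformulations transport the metrizable, countable, separable, and sequential conditions to one another simultaneously.
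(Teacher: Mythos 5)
Your global architecture matches the paper's: dualize $C$ to the Cu-cone $\Lsc_\sigma(C)$, realize that semigroup as an inductive limit of copies of $[0,\infty]^n$ by an Effros--Handelman--Shen-type argument, dualize back to get the projective limit in (iii), and handle (i), (ii), (iii) by Elliott, EHS, and the identification of traces on an AF algebra with monoid morphisms on $V(A)$ (the paper routes this last step through quasitraces, $F(\Cu(A))$, and the Brown--Ciuperca identification of $V(A)$ with the compact elements of $\Cu(A)$ --- a point your sketch glosses over but which is standard). The permanence check (iii)$\Rightarrow$(iv) is Proposition \ref{coneslimits} of the paper, and your countability bookkeeping for the metrizable case is in the right spirit.

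The genuine gap is in the crux (iv)$\Rightarrow$(iii), where your proposed mechanism does not match how the hypotheses are actually used and would not work as described. First, the finite stages of the inductive system do not come from ``an exhaustion of $S$ by finitely generated ideals'': ideals of $\Lsc_\sigma(C)$ correspond to idempotents of $C$ and are not of the form $[0,\infty]^n$; the finite stages are instead generated by finite subsets of $\A(C)$, the continuous linear functions, and hypothesis (iv)(a) enters by guaranteeing that every element of $\Lsc_\sigma(C)$ is a supremum of an increasing sequence from $\A(C)$ (Theorem \ref{ACsuprema}), which is also what makes the duality $C\cong F(\Lsc_\sigma(C))$ work. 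Second, connectedness does not act by ``ruling out perforation or interpolation failure on each finite stage'' --- the stages $[0,\infty]^n$ have these properties automatically. The actual role of (iv)(b) is that strong connectedness is equivalent to \emph{weak cancellation} in $\Lsc_\sigma(C)$ ($x+z\ll y+z\Rightarrow x\ll y$, Theorem \ref{weakcancellation}); weak cancellation yields a Riesz decomposition in $\A(C)$ (Theorem \ref{ACRiesz}) and is the engine of the triangle lemma (Lemma \ref{coretrianglelemma}), proved by a delicate induction on a lexicographic degree, which factors any relation $\phi(x)\ll\phi(y)$ through a larger finite stage where it becomes an order relation. Without identifying weak cancellation as the dual of strong connectedness and without the triangle lemma, the step you yourself flag as ``where the bulk of the technical work will reside'' has no actionable content, so the proposal does not yet constitute a proof of the only nontrivial implication.
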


We refer to property (a) in part (iv)  as ``having an abundance of compact idempotents". The fact that the primitive spectrum of an AF C*-algebra has a basis of compact open sets makes this condition necessary.
We  call property (b) ``strong connectedness". The existence of a non-trivial trace on every simple ideal-quotient of an AF C*-algebra makes this condition necessary. In general, if a C*-algebra $A$ is such that
its primitive spectrum  has a basis of compact open sets, and every simple quotient $I/J$, where $J\subsetneq I$ are ideals of $A$, has a non-zero densely finite trace, then $T(A)$ has an abundance of compact idempotents and is strongly connected, i.e.,  properties (a) and (b) above hold. For example, if $A$ has real rank zero, stable rank one, and is exact, then these conditions are met. Theorem \ref{mainchar} then asserts the existence of an AF C*-algebra $B$ such that $T(A)\cong T(B)$.

The crucial implication in  Theorem \ref{mainchar} is (iv) implies (iii).
A reasonable approach to proving it  is to first prove that (iv) implies (ii) by directly constructing a dimension group $G$ from the cone $C$, very much in the spirit of the proof of the Lazar-Lindenstrauss theorem obtained by Effros, Handelmann, and Shen in \cite{EHS} (which, unlike the proof in \cite{lazar-lindenstrauss}, also deals with non-metrizable Choquet simpleces).  If the cone $C$ is assumed to be finitely generated, then we indeed obtain a direct construction of an ordered vector space with the Riesz property $(V,V^+)$ such that $\mathrm{Hom}(V^+,[0,\infty])$ is isomorphic to $C$. This is done in the last section of the paper. In the general case, however, such an approach has  eluded us. 

To prove Theorem \ref{mainchar} we first establish a duality between extended Choquet cones with an abundance of compact idempotents and certain abstract Cuntz semigroups. Briefly stated, this duality works as follows:
\[
C\mapsto \Lsc_\sigma(C)\hbox{ and }S\mapsto F(S).
\]
That is, to an extended Choquet cone $C$ with an abundance of compact idempotents one assigns the Cu-cone $\Lsc_\sigma(C)$ of lower semicontinuous linear functions $f\colon C\to [0,\infty]$ with ``$\sigma$-compact support". In the other direction, to a Cu-cone $S$ with an abundance of compact ideals one assigns the cone of functionals $F(S)$; see Section \ref{duality} and Theorem \ref{dualitythm}.  In the context of this duality, strong connectedness in $C$ translates into the property of weak cancellation in
$\Lsc_\sigma(C)$. We then use this arrow reversing duality  to turn the question of finding a projective limit representation for a cone into one of finding an inductive limit representation for a  Cu-cone. To achieve the latter, we follow the strategy of proof of the Effros-Handelmann-Shen theorem, adapted to the category at hand. The main technical complication here is the non-cancellative nature of Cu-cones, but this is adequately compensated by the above mentioned property of ``weak cancellation" (dual to strong connectedness).

A question that is closely related to the one addressed by Theorem \ref{mainchar} asks for a characterization of the lattices arising as (closed two-sided) ideal lattices of AF C*-algebras. For separable AF C*-algebras, this problem was solved by Bratteli and Elliott in \cite{bratteli-elliott}, and independently by Bergman in unpublished work: Any complete algebraic lattice with a countable set of compact elements is the lattice of closed two-sided ideals of a separable AF C*-algebra.  A thorough discussion of this result is given  by Goodearl and Wehrung in \cite{goodearl-wehrung}.  The cardinality restriction on the set of compact elements is necessary, as demonstrated by examples of R\r{u}\v{z}i\v{c}ka and Wehrung (\cite{ruzicka}, \cite{wehrungexample}). Now,  the lattice of closed two-sided ideals of a C*-algebra $A$ is in order reversing bijection with the lattice of idempotents of $T(A)$ via the assignment $I\mapsto \tau_I$, where $\tau_I$ is the $\{0,\infty\}$-valued trace vanishing on $I_+$. Thus, the realization of a cone $C$ in the form $T(A)$ entails the realization of $(\Idem(C),\leq^{\mathrm{op}})$ as the ideal lattice of $A$. Curiously, no  cardinality restriction is needed in Theorem \ref{mainchar} above. This demonstrates that the examples of R\r{u}\v{z}i\v{c}ka and Wehrung are also examples of algebraic lattices that cannot be realized as the lattice of idempotents of a cone $C$ satisfying any of the equivalent conditions of Theorem \ref{mainchar}.


This paper is organized as follows: In Section \ref{sectionECCs} we define extended Choquet cones and prove a number of background results on their structure. In Section \ref{sec:conesfromfunctionals} we go over three constructions---starting from a C*-algebra, a
dimension group, and a Cu-semigroup---yielding extended Choquet cones that are strongly connected and have an abundance of compact idempotents. Sections \ref{functionspaces} and \ref{duality} delve into  spaces of linear functions on extended Choquet cones with an abundance of compact idempotents. In Theorem \ref{dualitythm} we establish the above mentioned  duality  assigning to a cone $C$ the Cu-cone $\Lsc_\sigma(C)$, and conversely to a Cu-cone $S$ its cone of functionals $F(S)$. In Section \ref{proofofmainchar} we prove Theorem \ref{mainchar}. In Section \ref{fingen} we assume that the cone $C$ is finitely generated. In this case we give a direct construction of an ordered vector space with the Riesz property $(V,V^+)$ such that $C\cong \mathrm{Hom}(V^+,[0,\infty])$. The vector space $V$ is described as $\R$-valued functions on a certain spectrum of the cone $C$.

\textbf{Acknowledgement}: The second author thanks Hannes Thiel  for fruitful discussions on the topic of topological cones and for sharing his unpublished notes \cite{thiel}. 

\section{Extended Choquet Cones}\label{sectionECCs}

\subsection{Algebraically ordered compact cones}
We call cone an abelian monoid $(C,+)$ endowed  with  a scalar multiplication by positive real numbers  $(0,\infty)\times C\to C$ such that 
\begin{enumerate}[\rm (i)]
\item
the map $(t,x)\mapsto tx$ is additive on both variables,
\item
$s(tx)=(st)x$ for all $s,t\in (0,\infty)$ and $x\in C$, 
\item
$1\cdot x=x$ for all $x\in C$.
\end{enumerate}
We do not assume that the addition operation on $C$ is  cancellative. In fact, the  primary example of the cones that we investigate below is $[0,\infty]$ endowed with the obvious operations.

The algebraic pre-order on $C$ is defined as follows: $x\leq y$ if there exists $z\in C$ such that $x+z=y$. We say that $C$ is algebraically ordered if this pre-order is an order. 

We call $C$ a topological cone if it is endowed with a topology  for which the operations of addition and multiplication by positive scalars  are jointly continuous. 

\begin{definition}\label{defExtCho}
An algebraically ordered topological cone  $C$ is called an extended Choquet cone if 
\begin{enumerate}[\rm (i)]
\item
$C$ is a lattice under the algebraic order,  and the addition operation is distributive over both $\wedge$ and $\vee$: 
\begin{align*}
x + (y\wedge z) &=(x+y)\wedge (x+z),\\
x + (y\vee z)   &=(x+y)\vee (x+z),
\end{align*}
for all $x,y,z\in C$,
\item
the topology on $C$ is compact, Hausdorff, and locally convex, i.e., it has a basis of open convex sets.
\end{enumerate}
\end{definition}

\begin{remark}
It is a standard result that  in a compact algebraically ordered monoid both upward and downward directed sets converge to their supremum and infimum, respectively (\cite[Proposition 3.1]{edwards}, \cite[Proposition VI-1.3, p441]{GHK}). We shall make frequent use of this fact applied to extended Choquet cones. It readily follows from this and the existence of finite suprema and infima that extended Choquet cones are complete lattices. 
\end{remark}

\begin{remark}
By Wehrung's \cite[Theorem 3.11]{wehrung}, the algebraic and order theoretic properties of an extended Choquet cone may be summarized as saying that it is an injective object in the category of positively ordered monoids. 
\end{remark}

\begin{example}
The set $[0,\infty]$  is an extended Choquet cone when endowed with the standard operations of addition and  scalar multiplication and the standard topology. More generally, the powers $[0,\infty]$, endowed with coordinatewise operations and the product topology are extended Choquet cones.
\end{example}

Let $C$ and $D$ be extended Choquet cones. A map $\phi\colon C\to D$ is a morphism in the extended Choquet cones category if $\phi$ is linear (additive, homogeneous with respect to scalar multiplication, and mapping 0 to 0) and continuous.

\begin{theorem}
The category of extended Choquet cones has projective limits. 
\end{theorem}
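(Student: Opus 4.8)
The plan is to realize the projective limit concretely as a sub-cone of a product and then to verify the axioms of Definition \ref{defExtCho}, the only delicate point being the lattice structure. Let $(C_i,\phi_{ij})_{i\le j}$ be a projective system of extended Choquet cones indexed by a directed set $I$. First I would check that the product $\prod_{i}C_i$, with coordinatewise operations and the product topology, is itself an extended Choquet cone: it is compact Hausdorff by Tychonoff; it is locally convex because the basic open sets of the product constrain only finitely many coordinates and may there be taken convex; its lattice operations and the distributive laws are inherited coordinatewise; and the cone operations are jointly continuous. I then set
\[
C=\Big\{(x_i)\in \prod_{i}C_i : \phi_{ij}(x_j)=x_i \text{ whenever } i\le j\Big\},
\]
equip it with the subspace topology and the inherited operations, and let $\pi_i\colon C\to C_i$ be the projections; the goal is to show $C$ is the projective limit.

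Several properties are immediate. Since each $\phi_{ij}$ is continuous and each $C_i$ is Hausdorff, $C$ is an intersection of equalizers of continuous maps, hence closed in the compact space $\prod_i C_i$ and therefore compact Hausdorff. As a sub-cone, $C$ is convex, so intersecting the convex basic open sets of the product with $C$ exhibits $C$ as locally convex, and the operations remain jointly continuous on the subspace. The one point needing an argument is that the \emph{intrinsic} algebraic order of $C$ agrees with the coordinatewise order. One inclusion is trivial; for the other, given $(x_i),(y_i)\in C$ with $x_i\le y_i$ in every $C_i$, the solution sets $S_i=\{z\in C_i:x_i+z=y_i\}$ are nonempty, compact, and Hausdorff, and satisfy $\phi_{ij}(S_j)\subseteq S_i$, so they form a projective system. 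Since a projective limit of nonempty compact Hausdorff spaces over a directed set is nonempty, I obtain a compatible family $(z_i)\in C$ witnessing $(x_i)\le(y_i)$ in $C$. Thus the algebraic order of $C$ is the coordinatewise order, which is a partial order, so $C$ is algebraically ordered.

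The crux, and the step I expect to be the main obstacle, is that morphisms of extended Choquet cones are required only to be linear and continuous, not to preserve $\wedge$ and $\vee$; consequently the coordinatewise meet of two elements of $C$ need not be compatible, and the lattice operations of $C$ are \emph{not} computed coordinatewise. To build the meet of $x=(x_i)$ and $y=(y_i)$ in $C$, I would put $m_j=x_j\wedge y_j$ and observe that, by monotonicity of the $\phi_{ij}$ (linear maps preserve the algebraic order), the net $(\phi_{ij}(m_j))_{j\ge i}$ is downward directed in $C_i$; I then define $\hat m_i=\inf_{j\ge i}\phi_{ij}(m_j)$, which exists since each $C_i$ is a complete lattice. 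Using the cited fact that downward directed nets converge to their infima together with the continuity of the $\phi_{ik}$, I would check that $\hat m=(\hat m_i)$ lies in $C$, that it is a common lower bound of $x$ and $y$, and that it dominates every common lower bound, so $\hat m=x\wedge y$ in $C$; the join is produced dually from $\sup_{j\ge i}\phi_{ij}(x_j\vee y_j)$. The distributive laws then follow because addition, being continuous, commutes with directed infima and suprema and distributes in each fiber, so both sides of each identity reduce to the same directed infimum (resp.\ supremum) in the $C_i$. Together with the remark that a compact algebraically ordered lattice is automatically complete, this establishes that $C$ is an extended Choquet cone.

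Finally, the universal property is routine: the $\pi_i$ are linear, continuous, and compatible, and given any extended Choquet cone $D$ with compatible morphisms $\psi_i\colon D\to C_i$, the assignment $d\mapsto(\psi_i(d))_i$ lands in $C$, is the unique map with $\pi_i\circ\psi=\psi_i$, and is a morphism because it is linear and continuous into the subspace $C$. No compatibility with the lattice operations is demanded of morphisms, which is precisely why this last step is harmless even though the lattice structure of $C$ was subtle; this same observation indicates why the construction is tailored to the directed, cofiltered setting of projective limits rather than to arbitrary diagrams.
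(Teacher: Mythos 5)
Your proposal is correct and follows essentially the same route as the paper: realize the limit as the subcone of compatible threads in the product, and obtain $x\vee y$ and $x\wedge y$ as the directed supremum/infimum of the images $\phi_{ij}(x_j\vee y_j)$, resp.\ $\phi_{ij}(x_j\wedge y_j)$, which is exactly the paper's net $z^{(k)}$ written coordinatewise; the distributivity and universal-property arguments also match. The one point you treat more explicitly than the paper is the identification of the intrinsic algebraic order on $C$ with the coordinatewise order, via the nonemptiness of the projective limit of the compact solution sets $S_i=\{z\in C_i: x_i+z=y_i\}$ --- a detail the paper subsumes under ``well known arguments,'' and which your lattice and least-upper-bound verifications do genuinely rely on.
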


\begin{proof}
Let $\{C_i:i\in I\}$, $\{\varphi_{i,j}\colon C_i\to C_j:i,j\in I \hbox{ with }j\leq i\}$, be a projective system of extended Choquet cones, where $I$ is an upward directed set. Define 
\[
C=\bigg\{(x_i)_{i}\in \prod_{i\in I}
C_i: x_{j}=\varphi_{i,j}(x_i)\hbox{ for all }i,j\in I\hbox{ with }j\leq i\bigg\}.
\] 
Endow the product $\prod_{i\in I} C_i$ with coordinatewise operations, coordinatewise order, and with the product topology; endow $C$ with the topological cone structure induced by inclusion. Let $\pi_i\colon C\to C_i$, $i\in I$, denote the projection maps. It follows from well known arguments that $\{C,\pi_i|_C:i\in I\}$ is the projective limit of the system $\{C_i,\phi_{i,j}:i,j\in I\}$ as compact Hausdorff topological cones (cf. \cite[Theorem 13]{davies}). Since  for each $i$ the topology of $C_i$ has a basis of open convex sets, the product topology on $\prod_{i} C_i$ also has a basis of open convex sets. Further, since $C$ is a convex subset of $\prod_i C_i$, the induced topology on $C$ is locally convex as well.

Let us now prove that $C$ is a lattice. The proof runs along the same lines as the one in \cite[Theorem 13]{davies} for projective limits of Choquet simplices. We show that $C$ is closed under finite suprema; the argument for finite infima is similar. Let $x=(x_i)_i$ and $y=(y_i)_i$ be in $C$. Their coordinatewise supremum exists in $\prod_{i} C_i$, but does not necessarily belong to $C$. For each $k\in I$ define $z^{(k)}\in \prod_i C_i$ by
\[
(z^{(k)})_i=\begin{cases}
\phi_{k,i}(x_k\vee y_k)& \hbox{if }i\leq k,\\
x_i\vee y_i &\hbox{otherwise}.
\end{cases}
\]
If $k'\geq k$, then 
\[
\phi_{k',k}(x_{k'}\vee y_{k'})\geq \phi_{k',k}(x_{k'})=x_k, 
\]
and similarly $\phi_{k',k}(x_{k'}\vee y_{k'})\geq y_k$, whence $\phi(x_{k'}\vee y_{k'})\geq x_k\vee y_k$. It follows that
\[
(z^{(k')})_i=\phi_{k',i}(x_{k'}\vee y_{k'})\geq \phi_{k,i}(x_k\vee y_k)=(z^{(k)})_i,
\]
for $i\leq k$, while 
\[
(z^{(k')})_i\geq x_i\vee y_i=(z^{(k)})_i
\] 
for $i\nleq k$. Thus, $(z^{(k)})_{k\in I}$ is an upward directed net. Set $x\vee y:=\lim_k z^{(k)}$, which is readily shown to belong to $C$. Then $x\vee y\geq z^{(k)}\geq x,y$ for all $k$.  Suppose that $w=(w_i)_i\in C$ is such that $w\ge x,y$. Then $w_i\ge x_i\vee y_i$ for all $i$, and further 
\[
w_i=\varphi_{k,i}(w_k)\ge\varphi_{k,i}(x_k\vee y_{k}).
\] 
Hence, $w\ge z^{(k)}$ for all $k$, and so  $w\geq x\vee y$. This proves that $x\vee y$ is in fact the supremum of $x$ and $y$ in $C$. 

Let us prove distributivity of addition over $\vee$. Let $x,y,v\in C$. Fix an index $i$.
Then
\begin{align*}
((x+v)\vee (y+v))_i &=\lim_k \phi_{k,i}((x_k+v_k)\vee (y_k+v_k))\\
&=\lim_{k} \phi_{k,i}((x_k\vee y_k)+v_k)\\
&=\lim_{k}\phi_{k,i}(x_k\vee y_k)+v_i\\
&=(x\vee y+v)_i,
\end{align*}
where we have used the distributivity of addition over $\vee$ on each coordinate and the construction of joins
in $C$ obtained above. Thus, $(x+v)\vee (y+v)=(x\vee y)+v$. Distributivity over $\wedge$ is handled similarly.
\end{proof}

\subsection{Lattice of idempotents}
Throughout this subsection  $C$ denotes an extended Choquet cone. 

An element $w\in C$ is called idempotent if $2w=w$. It  follows, using that $C$ is algebraically ordered, that $t w=w$ for all $t\in (0,\infty]$. We denote the set of idempotents of $C$ by $\Idem(C)$. The set $\Idem(C)$ is  a sub-lattice of $C$: if $w_1$ and $w_2$ are idempotents then 
\[
2(w_1\vee w_2)=(2w_1\vee 2w_2)=w_1\vee w_2,
\]
where we have used that  multiplication by $2$ is an order isomorphism. Hence, $w_1\vee w_2$ is an idempotent. Similarly,  $w_1\wedge w_2$ is shown to be an idempotent. Moreover, $w_1\vee w_2=w_1+w_2$, a fact easily established.

In the lattice $\Idem(C)$, we use the symbol $\gg$ to denote the way-below relation under the opposite order.
That is, $w_1\gg w_2$ if whenever $\inf_i v_i\leq w_2$ for a decreasing net $(v_i)_i$ in $\Idem(C)$, we have  $ v_{i_0}\leq w_1$ for some $i_0$. We call $w\in \Idem(C)$ a compact idempotent if $w\gg w$. More explicitly, $w$ is compact if whenever $\inf_i v_i\leq w$ for a decreasing net $(v_i)_i$ in $\Idem(C)$, we have  $v_{i_0}\leq w$ for some $i_0$. Note: we only use the notion of compact element in $\Idem(C)$ in the sense just defined, i.e., applied to the \emph{opposite order}.

A complete lattice is called algebraic if each of its elements is a supremum of compact elements 
(\cite[Definition I-4.2]{GHK}).   

\begin{definition} We say that an extended Choquet cone $C$ has an abundance of compact idempotents if  $(\Idem(C), \leq^{\mathrm{op}})$ is an algebraic lattice, i.e.,  every idempotent in $C$ is an infimum of compact idempotents.
\end{definition}

Let $x\in C$. Consider the set $\{z\in C:x+z=x\}$. This set is closed under addition and also closed in the topology of $C$. It follows that it has a maximum element $\s(x)$. Since $2\cdot \s(x)$ is also absorbed additively by $x$, we have $\s(x)=2\s(x)$, i.e., $\s(x)$ is an idempotent. We call $\s(x)$ the support idempotent of $x$.

\begin{lemma}\label{supportlemma}(Cf. \cite[Lemma 3.2]{edwards})
Let $x,y,z\in C$.
\begin{enumerate}[\rm (i)]
\item
$\s(x) = \lim_n\frac{1}{n}x$.
\item
If $x+z\leq y+z$ then $x+\s(z)\leq y+\s(z)$.
\end{enumerate}
\end{lemma}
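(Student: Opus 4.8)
For part (i), the plan is to identify $\s(x)$ with the infimum of the decreasing family $\left(\tfrac1n x\right)_n$. First I would note that this family is decreasing in the algebraic order, since $\tfrac1n x = \tfrac1{n+1}x + \tfrac{1}{n(n+1)}x \ge \tfrac1{n+1}x$; by the cited convergence of downward directed nets in a compact algebraically ordered monoid, it converges to its infimum $e := \inf_n \tfrac1n x = \lim_n \tfrac1n x$. Since multiplication by $2$ is an order isomorphism it preserves infima, so $2e = \inf_n \tfrac2n x = e$, whence $e$ is idempotent. Joint continuity of scalar multiplication gives $x + e = \lim_n\left(1 + \tfrac1n\right)x = 1\cdot x = x$, so $e$ is absorbed by $x$ and therefore $e \le \s(x)$. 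For the reverse inequality, I would multiply $x + \s(x) = x$ by $t \in (0,\infty)$ and use $t\,\s(x) = \s(x)$ to get $tx + \s(x) = tx$, i.e. $\s(x) \le tx$ for every $t$; taking $t = \tfrac1n$ and passing to the infimum yields $\s(x) \le e$. Hence $\s(x) = e = \lim_n \tfrac1n x$.

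For part (ii), the key reduction is to replace the inequality by an equation using distributivity of addition over $\vee$. Indeed $x + z \le y + z$ is equivalent to $(x+z)\vee(y+z) = y+z$, that is, to $(x\vee y) + z = y + z$. Writing $a = x\vee y$ and $b = y$, so that $a \ge b$, it then suffices to prove the reduced statement: if $a + z = b + z$ then $a + \s(z) = b + \s(z)$; translating back through distributivity, the identity $(x\vee y)+\s(z) = y + \s(z)$ gives exactly $x + \s(z) \le y + \s(z)$.

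The heart of the argument is then a short induction showing $ka + z = kb + z$ for every $k \ge 1$. The base case is the hypothesis, and for the inductive step I would substitute the equation $a + z = b + z$ into itself:
\[
(k+1)a + z = ka + (a+z) = (ka + z) + b = (kb + z) + b = (k+1)b + z,
\]
using the induction hypothesis $ka + z = kb + z$ at the third equality. Scaling this identity by $\tfrac1k$ gives $a + \tfrac1k z = b + \tfrac1k z$ for all $k$, and letting $k \to \infty$, part (i) together with joint continuity of addition yields $a + \s(z) = b + \s(z)$, completing the reduced statement.

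The main obstacle, and the reason an easy estimate is not enough, is the non-cancellative nature of $C$: one cannot simply cancel $z$. The naive bound $u + z \le v + z \Rightarrow u + \s(z) \le v + z$ (obtained by replacing $z$ on the left with $\tfrac1n z \le z$ and taking limits) only shrinks the left-hand copy of $z$; improving the right-hand $z$ down to $\s(z)$ is the real content. Passing to the equation $(x\vee y)+z = y+z$ is what unlocks this, since an exact equation can be substituted into itself in the induction, whereas an inequality cannot. I expect this reduction step---and the recognition that distributivity over $\vee$ makes it available---to be the crux; everything after it is routine scaling together with the limit justified by part (i).
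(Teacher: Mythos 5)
Your argument is correct, and part (i) is essentially the paper's proof (the paper shows every $z$ with $x+z=x$ satisfies $z+w=w$ where $w=\lim_n\frac1n x$; you specialize to $z=\s(x)$ and derive $\s(x)\leq \frac1n x$ directly --- same content). Part (ii) takes a genuinely different route. The paper performs the induction directly on the inequality: from $x+z\leq y+z$ and the inductive hypothesis $nx+z\leq ny+z$ one gets
\[
(n+1)x+z=nx+(x+z)\leq nx+(y+z)=(nx+z)+y\leq (ny+z)+y=(n+1)y+z,
\]
using only that addition preserves the algebraic order. So your stated reason for first converting to an equation --- that ``an inequality cannot be substituted into itself'' --- is not accurate; the non-cancellative difficulty does not obstruct this step. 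That said, your detour through $(x\vee y)+z=y+z$ is valid (distributivity of $+$ over $\vee$ is part of the definition of an extended Choquet cone, and translating back via $x\leq x\vee y$ is immediate), and it does buy one small thing: at the final limit you pass to the limit in an \emph{equation}, which needs only continuity of addition and the Hausdorff property, whereas the paper's version passes to the limit in an \emph{inequality}, which tacitly uses that the algebraic order is closed (true here, since $\{(a,b):a\leq b\}$ is the continuous image of the compact set $C\times C$ under $(a,c)\mapsto(a,a+c)$, but left unsaid in the paper). Net effect: a slightly longer but self-justifying variant of the same induction-and-rescale argument.
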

\begin{proof}
(i) Observe that $w:=\lim_n\frac{1}{n}x$ exists, since the infimum of a decreasing sequence is also its limit. It is also clear that $2w=w$, and that $x+w=x$. Let $z\in C$ be such that $x+z=x$. Then $x+nz=x$, i.e., 
$\frac1n x+ z=\frac1nx$, for all $n\in \N$. Letting $n\to \infty$, we get that 
$w+z=w$, and in particular, $w\leq z$. Thus, $w$ is the largest element absorbed by $x$, i.e. $w=\s(x)$. 

(ii) This is \cite[Lemma 3.2]{edwards}. Here is the argument: We deduce, by induction, that $nx+z\leq ny+z$ for all $n\in \N$. Hence, $x+\frac1nz\leq y+\frac1nz$.  Letting $n\to \infty$ and using (i), we get $x+\s(z)\leq y+\s(z)$. 
\end{proof}

\begin{lemma}\label{supportmax}
Let $K\subseteq C$ be closed and convex. Then the map $x\mapsto \s(x)$ attains a maximum on $K$.
\end{lemma}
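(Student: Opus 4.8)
The plan is to produce the maximizer as a point of a nonempty intersection of closed sets, exploiting compactness of $K$ through the finite intersection property, rather than trying to establish semicontinuity of $x\mapsto\s(x)$ and pass to the limit of a maximizing net. The whole argument rests on reformulating the support idempotent in terms of absorption: for an idempotent $w$ and any $y\in C$, I would first observe that $w\leq \s(y)$ holds if and only if $y+w=y$. The forward direction is immediate since $\s(y)$ is absorbed and hence so is any $w\leq\s(y)$; conversely, if $y+w=y$ then $w$ is absorbed by $y$, so $w\leq\s(y)$ by maximality of $\s(y)$. I would also record the companion fact that an idempotent lying below an element is automatically absorbed by it: if $w$ is idempotent and $w\leq y$, writing $y=w+c$ gives $y+w=2w+c=w+c=y$, whence $w\leq\s(y)$.

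With these in hand, for each $x\in K$ I would set $K_x=\{y\in K:\s(x)\leq\s(y)\}$. By the reformulation above, $K_x=K\cap\{y\in C:y+\s(x)=y\}$, and the set $\{y:y+\s(x)=y\}$ is the equalizer of the continuous maps $y\mapsto y+\s(x)$ and $y\mapsto y$ into the Hausdorff space $C$, hence closed. Since $K$ is closed in the compact cone $C$, it is itself compact, and each $K_x$ is a closed subset of it; moreover $x\in K_x$, so each $K_x$ is nonempty.

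The heart of the argument is the finite intersection property of the family $\{K_x\}_{x\in K}$. Given $x_1,\dots,x_n\in K$, I would test $\bigcap_j K_{x_j}$ against the convex combination $\bar x=\tfrac1n(x_1+\cdots+x_n)$, which lies in $K$ because $K$ is convex. For each $j$ we have $\bar x\geq \tfrac1n x_j\geq \s(x_j)$, where the second inequality uses $\s(x_j)=\lim_m\tfrac1m x_j\leq \tfrac1n x_j$ from Lemma \ref{supportlemma}(i); since $\s(x_j)$ is an idempotent below $\bar x$, the absorption fact yields $\s(x_j)\leq\s(\bar x)$, so $\bar x\in\bigcap_j K_{x_j}$. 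By compactness of $K$, the total intersection $\bigcap_{x\in K}K_x$ is therefore nonempty, and any $x_0$ in it satisfies $\s(x)\leq\s(x_0)$ for all $x\in K$; thus $\s$ attains its maximum at $x_0$.

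The point demanding care is that $C$ is non-cancellative, so the support idempotent cannot be handled by subtraction and must be manipulated purely through absorption and the algebraic order. The one genuinely non-formal step is the finite-intersection estimate, where convexity of $K$—not merely its compactness—is used essentially, to manufacture a single element of $K$ whose support dominates finitely many prescribed supports at once; this is precisely where the hypothesis that $K$ is convex enters the proof.
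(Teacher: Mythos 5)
Your proof is correct and is essentially the paper's argument: both use convexity to average finitely many points of $K$ into a single point whose support idempotent dominates all of theirs, and then invoke compactness together with continuity of addition to produce the maximizer. The only difference is the packaging of the compactness step --- you use the finite intersection property of the closed sets $K_x$, while the paper extracts a convergent subnet from a net of representatives indexed by the upward directed set of supports and passes the absorption identities to the limit.
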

\begin{proof}
Let $W=\{\s(x):x\in K\}$.  Let $x_1,x_2\in K$, with $\s(x_1)=w_1$ and $\s(x_2)=w_2$.  Since $K$ is convex,  
$(x_1+x_2)/2\in K$. Since
\[
\s\Big(\frac{x_1+x_2}{2}\Big)=\lim_n \frac{1}{2n}x_1+\frac{1}{2n}x_2=\s(x_1)+\s(x_2),
\]
the set $W$ is closed under addition. For each $w\in W$, let us choose $x_w\in K$ with $\s(x_w)=w$.
By compactness of $K$, the net $(x_w)_{w\in W}$ has a convergent subnet. Say $x_{h(\lambda)}\to x\in K$, where $h\colon \Lambda\to W$ is increasing and with cofinal range. For each $\lambda$ we have $x_{h(\lambda')}+h(\lambda)=x_{h(\lambda')}$ for all $\lambda'\geq \lambda$. Passing to the limit in $\lambda'$ we get $x+h(\lambda)=x$. Since $h(\lambda)$ ranges through   a cofinal set in $W$, $x+w=x$ for all $w\in W$. Thus, $\s(\cdot)$
attains its maximum on $W$ at $x$. 
\end{proof}

\begin{lemma}\label{opensetsupport}
For each idempotent $w\in C$ the set $\{x\in C:w\gg \s(x)\}$ is open. (Recall that  $\gg$ is the way below relation 
in the lattice $(\Idem(C),\leq^{\mathrm{op}})$.)
\end{lemma}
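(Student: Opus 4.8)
The plan is to show that the complement $\{x\in C:w\not\gg \s(x)\}$ is closed. Two preliminary facts set the stage. First, for every idempotent $v$ the set $F_v:=\{x\in C:x+v=x\}=\{x\in C:v\le \s(x)\}$ is closed, being the equalizer of the continuous maps $x\mapsto x+v$ and $x\mapsto x$ in the Hausdorff cone $C$. Second, $\Idem(C)=\{v\in C:2v=v\}$ is itself the equalizer of $v\mapsto 2v$ and the identity, hence a closed, and therefore compact, subset of $C$.

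Next I would rewrite the complement in terms of these closed sets. Unwinding the definition of $\gg$ (the way-below relation in $(\Idem(C),\le^{\mathrm{op}})$), one has $w\not\gg \s(x)$ precisely when there is a decreasing net of idempotents all lying in $\mathcal G:=\{v\in\Idem(C):v\not\le w\}$ whose infimum is $\le \s(x)$; equivalently, writing $\mathcal B$ for the set of infima of downward directed subfamilies of $\mathcal G$, the complement equals $\bigcup_{b\in\mathcal B}F_b$. Granting that $\mathcal B$ is closed (hence compact), closedness of the complement follows by the same kind of limiting argument used in Lemma \ref{supportmax}: if $x_\lambda\to x$ with each $x_\lambda\in\bigcup_{b}F_b$, choose $b_\lambda\in\mathcal B$ with $x_\lambda+b_\lambda=x_\lambda$, pass to a subnet with $b_\lambda\to b\in\mathcal B$ by compactness, and use joint continuity of addition to get $x+b=\lim_\lambda(x_\lambda+b_\lambda)=x$, so that $b\le \s(x)$ and $x\in F_b\subseteq\bigcup_b F_b$.

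The whole proof therefore reduces to showing that $\mathcal B$ is closed in $\Idem(C)$, and this is the step I expect to be the main obstacle. The easy half is that $\mathcal G$ is itself open in $\Idem(C)$ (its complement $\{v:v+w=w\}$ is an equalizer), so a limit point $b$ of $\mathcal B$ satisfying $b\not\le w$ already lies in $\mathcal G$, whence $b\in\mathcal B$ via the one-element family $\{b\}$. The difficulty lies with limit points $b\le w$: here one must manufacture a downward directed family inside $\mathcal G$ with infimum $b$ out of the approximating families. The tools I would use are the distributivity of $\Idem(C)$ together with the stabilizing map $v\mapsto v+b$, which carries $\mathcal G$ into $\mathcal G$ (since $v+b\ge v\not\le w$) and into the interval above $b$, where $(v+b)\wedge(v'+b)=(v\wedge v')+b$. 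The genuine work is to arrange downward directedness across the different approximating families while keeping the finite meets out of $\{u:u\le w\}$; I would do this by a diagonal selection, exploiting that the approximating infima converge to $b$ and that downward directed nets converge to their infima in the compact cone $C$.
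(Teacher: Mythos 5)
Your reduction is sound as far as it goes: the complement of $\{x\in C:w\gg\s(x)\}$ is indeed $\bigcup_{b\in\mathcal{B}}F_b$ with each $F_b=\{x:x+b=x\}$ closed, $\Idem(C)$ is compact, and your subnet argument correctly shows that this union is closed \emph{provided} $\mathcal{B}$ is closed. But that proviso is not a technical loose end --- it is where the entire content of the lemma lives. The assertion that every limit point of $\mathcal{B}$ dominates an element of $\mathcal{B}$ is essentially the statement that $\{v\in\Idem(C):w\gg v\}$ is open in $\Idem(C)$, i.e.\ the lemma itself restricted to idempotents, so your argument is close to circular at this point. Your sketch for the hard case $b\leq w$ does not close the gap: translating the $\lambda$-th approximating family by $b$ gives, for each $\lambda$, a downward directed family in $\mathcal{G}$ with infimum $b_\lambda+b\geq b$, but to merge these across different $\lambda$ into one downward directed family you must take finite meets of the form $(v_{i_1}^{(\lambda_1)}+b)\wedge\cdots\wedge(v_{i_n}^{(\lambda_n)}+b)$, and $\mathcal{G}=\{v:v\nleq w\}$ is not closed under meets --- distributivity gives you no control over whether such a meet drops into $\{u:u\leq w\}$. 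Nothing in the proposed ``diagonal selection'' addresses this, and since $\inf_\lambda(b_\lambda+b)=b\leq w$, the meets must in fact eventually get arbitrarily close to $\{u:u\leq w\}$. A further warning sign is that your argument nowhere uses local convexity of $C$, which the paper's proof relies on essentially.

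The paper argues locally rather than via the complement. Given $x$ with $w\gg\s(x)$, Lemma \ref{supportmax} provides, in each closed convex neighborhood $K$ of $x$, a point $x_K$ at which $\s(\cdot)$ attains its maximum over $K$; local convexity makes the family of such $K$ downward directed, so $(\s(x_K))_K$ is a decreasing net of idempotents, and a short argument shows its infimum is exactly $\s(x)$. The definition of $\gg$ can then be applied to this \emph{specific} net, producing a single neighborhood $K_0$ that witnesses the openness at $x$. The point you are missing is precisely this device: a canonical decreasing net of idempotents converging to $\s(x)$ that is indexed by neighborhoods of $x$, so that the way-below condition localizes. I do not see how to manufacture such a net inside your $\mathcal{G}$ without the convexity input, so as written the proposal does not constitute a proof.
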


\begin{proof}
Let $x\in C$ be such that $w\gg \s(x)$. By Lemma \ref{supportmax}, for each closed convex neighborhood $K$  of $x$, there exists $x_K\in K$ at which $\s(\cdot)$ attains its maximum. By the local convexity of $C$, the system of closed convex neighborhoods of $x$ is downward directed. It follows that $(\s(x_K))_K$ is downward directed. Moreover, $x_K\to x$, since the topology is Hausdorff. We claim that $\s(x)=\inf_K \s(x_K)$, where $K$ ranges through all the closed convex neighborhoods of $x$. Proof: Set $y=\inf_K \s(x_K)$. We have $y\leq \s(x_K)\leq \frac{x_K}{n}$ for all $K$ and $n\in \N$. Passing to the limit, first in $K$ and then in $n$, we get that $y\leq \s(x)$. On the other hand, $\s(x)\leq \s(x_K)$ for all $K$ (since $x\in K$ and $\s$ attains its maximum on $K$ at $x_K$). Thus, $\s(x)\leq y$, proving our claim.

We have $w\gg \s(x)=\inf_K \s(x_K)$. Hence, there is $K$ such that $w\gg \s(x_K)$. So, there is a neighborhood of $x$ all whose members belong to $\{z\in C:w\gg \s(z)\}$. This shows that $\{z\in C:w\gg \s(z)\}$ is open.
\end{proof}

\subsection{Cancellative subcones}
Fix an idempotent $w\in C$. Let 
\[
C_w=\{x\in C:\s(x)=w\}.
\] 
Then $C_w$ is closed under sums, scalar multiplication by positive scalars, finite infima, and finite suprema.
By Lemma \ref{supportlemma} (ii), $C_w$ is also cancellative: $x+z\leq y+z$ implies that $x\leq y$ for all $x,y,z\in C_w$. It follows that $C_w$ embeds in a vector space; namely, the abelian group of formal differences $x-y$, with $x,y\in C_w$ endowed with the unique scalar multiplication extending the scalar multiplication  on $C_w$. Let $V_w$ denote the vector space of differences $x-y$, with $x,y\in C_w$. Let $\eta\colon C_w\times C_w\to V_w$ be defined by $\eta(x,y)=x-y$. We endow $C_w$ with the topology that it receives as a subset of $C$. We endow $V_w$ with the quotient topology coming from the map $\eta$.

\begin{theorem}\label{compactbase}
Let $w\in \Idem(C)$ be a compact idempotent.  Then $V_w$ is a locally convex topological vector space whose topology  restriced to $C_w$ agrees with the topology on $C_w$. Moreover, either $C_w=\{w\}$ or $C_w$ has a compact base.
\end{theorem}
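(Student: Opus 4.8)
The plan is to dispose of the degenerate case, record the algebraic structure of $C_w$, and then construct an explicit compact base, from which the stated topological vector space assertions follow.

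If $C_w=\{w\}$ there is nothing to prove, so assume there is $x_0\in C_w$ with $x_0\neq w$. Note that $w$ is the neutral element of $C_w$: every $x\in C_w$ satisfies $x+w=x$, hence $x=x+w\geq w$, and scalar multiplication preserves the support, so $C_w$ is a genuine subcone with apex $w$. By Lemma \ref{supportlemma}(ii) it is cancellative, and it inherits a lattice structure over which addition is distributive; consequently $V_w$ is the positive cone of a Riesz space, which is the structural reason one expects its base to be a (Choquet) simplex. I would first establish that $V_w$, with the quotient topology from $\eta$, is a locally convex Hausdorff topological vector space: continuity of the vector space operations is immediate from the universal property of the quotient and continuity of the operations on $C_w$; local convexity follows because $C$ is locally convex, so that sets $\eta(W\times W)$ with $W$ a convex neighbourhood of $w$ in $C_w$ form a base of convex neighbourhoods of $0$; and Hausdorffness follows once we know the continuous linear functionals furnished by local convexity of $C$ separate the points of $C_w$, hence of $V_w$. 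That the subspace topology on $C_w$ agrees with its original topology is the assertion that $x\mapsto x-w$ is an embedding $C_w\hookrightarrow V_w$, whose nontrivial half (openness onto its image) I would deduce from local convexity together with the compactness of the base produced below.

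The heart of the argument is the compact base, and it is where compactness of $w$ enters. Since $w$ is compact, Lemma \ref{opensetsupport} shows that $O:=\{x\in C:w\gg \s(x)\}$ is open; it contains $w$ and all of $C_w$ (because $\s(x)=w$ and $w\gg w$), while every $x\in O$ satisfies $\s(x)\leq w$. Using regularity of the compact Hausdorff space $C$ and local convexity, I would fix a convex open $U$ with $w\in U\subseteq \overline{U}\subseteq O$, and consider the gauge $p(x)=\inf\{t>0: x\in tU\}$ on $C_w$. Then $p$ is positively homogeneous and subadditive, with $p(w)=0$. Positivity and finiteness on $C_w\setminus\{w\}$ come from the limit behaviour of rays: by Lemma \ref{supportlemma}(i), $sx\to \s(x)=w\in U$ as $s\to 0$, giving $p(x)<\infty$, while the increasing net $(tx)_{t>0}$ converges to the idempotent $\infty\cdot x=\sup_t tx\geq x>w$, which therefore is not $\leq w$, lies outside $O\supseteq \overline{U}$, and forces the scaled elements eventually out of $\overline{U}$, giving $p(x)>0$. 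The same dichotomy shows $p(x)=\infty$ precisely when $\s(x)\not\leq w$. Consequently the cross-section $B=\{x\in C_w:p(x)=1\}$ meets each ray of $C_w$ exactly once, and I would show $\overline{C_w}\cap\{p=1\}\subseteq C_w$: a limit of elements of $C_w$ satisfies $x+w=x$, so $\s(x)\geq w$, and if in addition $p(x)=1<\infty$ then $\s(x)\leq w$, whence $\s(x)=w$ and $x\in C_w$. Thus $B$ is closed in the compact space $C$, hence compact.

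The main obstacle I anticipate is that the gauge $p$ is only subadditive, so $\{p=1\}$ is a priori a compact cross-section but not convex; promoting it to an honest convex base requires replacing $p$ by a genuinely \emph{additive} continuous functional that is finite and strictly positive on $C_w\setminus\{w\}$. This is exactly the step where the vector-lattice structure must be exploited: using that $V_w$ is a Riesz space and that addition distributes over $\wedge$ and $\vee$ (Definition \ref{defExtCho}(i)), together with the abundance of continuous linear functionals coming from local convexity, one upgrades the sublinear gauge to a strictly positive continuous linear functional, whose unit level set is then a compact convex base. With such a base in hand, $C_w=\R_{\geq 0}B$ is locally compact, and the earlier topological claims about $V_w$ and the agreement of topologies on $C_w$ follow routinely, completing the proof.
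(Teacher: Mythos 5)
Your construction correctly isolates where compactness of $w$ enters --- Lemma \ref{opensetsupport} makes $O=\{x\in C:w\gg \s(x)\}$ open, so $C_w=\{x\in C:x\geq w\}\cap O$ is locally compact, and the Minkowski gauge of a convex open $U$ with $w\in U\subseteq \overline{U}\subseteq O$ does cut out a compact cross-section of each ray --- but the proposal has two genuine gaps. First, the claim that continuity of the operations on $V_w$ is ``immediate from the universal property of the quotient'' fails: the product of two quotient maps need not be a quotient map, and embedding a cancellative topological cone into a topological vector space so that the topologies match is delicate; likewise, local convexity of $C$ does not by itself ``furnish'' point-separating continuous linear functionals on $C_w$ (that is a Hahn--Banach-type statement requiring proof in this non-cancellative setting). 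This is exactly why the paper, after establishing local compactness of $C_w$ as above, simply invokes \cite[Theorem 5.3]{lawson}. Second, and more seriously, the step you yourself call the main obstacle --- upgrading the merely subadditive gauge $p$ to a strictly positive \emph{additive} continuous functional whose unit level set is a compact convex base --- is precisely the content of \cite[Theorem II.2.6]{alfsen} (a nontrivial locally compact cone in a locally convex space has a compact base), and ``using the Riesz space structure \ldots\ one upgrades the sublinear gauge'' is a restatement of the goal rather than an argument. The lattice structure is in any case a red herring here: Alfsen's theorem needs only local compactness and local convexity, not that $V_w$ be a Riesz space.

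Two smaller points. Under the paper's stated definition a base need only meet each ray in a singleton (convexity is not literally required), so your cross-section $\{p=1\}$ would already witness the last assertion of the theorem \emph{if} its compactness were fully justified; but even that needs care, since showing $\{p\le 1\}$ is closed requires the ``segment from an interior point to a closure point'' lemma, whose usual proof uses the vector space structure you have not yet established. (The paper does use convexity of the base later, e.g.\ in Lemma \ref{fingenCw} where it is a Choquet simplex, so the convex version is what is really wanted.) In short: your route is a plausible hands-on substitute for the two citations the paper relies on, but as written it replaces them with assertions rather than proofs; either supply the Lawson- and Alfsen-type arguments in this setting, or follow the paper and reduce the whole theorem to local compactness of $C_w$ plus those two references.
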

Note: A subset $B$ of a cone $T$ is called a base if for each nonzero $x\in T$ the intersection of $(0,\infty)\cdot x$ with $B$ is a singleton set.
\begin{proof}
Let us first show that the topology on $C_w$ is locally compact. Since $w$ is compact, the set $\{x\in C:w\geq \s(x)\}$ is open by Lemma \ref{opensetsupport}.   We then have that  $C_w$ is the intersection of the closed set $\{x\in C:w\leq x\}$ and the open set $\{x:w\geq \s(x)\}$. Hence, $C_w$ is locally compact in the induced topology.

We can now apply \cite[Theorem 5.3]{lawson}, which asserts that if $C_w$ is a locally compact cancellative cone, then indeed $V_w$ is a locally convex topological vector space whose topology extends that of $C_w$. Finally, by \cite[Theorem II.2.6]{alfsen}, a locally compact nontrivial cone in a locally convex topological space  has a compact base.
 \end{proof}

\subsection{Strong connectedness}
Let $v,w\in \Idem(C)$ be such that $v\leq w$. Let's say that $v$ is compact relative to $w$  if $v$ is a compact idempotent in the extended Choquet cone $\{x\in C:x\leq w\}$. Put differently, if a downward directed net $(v_i)_i$ in $C$ satisfies that $\inf_i v_i\leq v$, then $v_i\wedge w\leq v$ for some $i$.

\begin{theorem}\label{kernelpropertythm}
Let $C$ be an extended Choquet cone. The following are equivalent:
\begin{enumerate}[\rm (i)]
\item
For any two $w_1,w_2\in \Idem(C)$ such that $w_1 \leq w_2$, $w_1\neq w_2$, and $w_1$ is compact relative to $w_2$, there exists $x\in C$  such that $w_1\leq x\leq w_2$ and $x$ is not an idempotent.
\item 
The set $\{x\in C:x\leq w\}$ is connected for all $w\in \Idem(C)$.
\end{enumerate}
Moreover, if the above hold then the element $x$ in (i) may always be chosen such that $\s(x)=w_1$.
\end{theorem}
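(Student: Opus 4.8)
The plan is to prove the equivalence (i) $\Leftrightarrow$ (ii) by contraposition, establishing that the failure of strong connectedness at some idempotent $w$ is witnessed precisely by an interval $[w_1, w_2]$ that contains no non-idempotent element. For the implication (ii) $\Rightarrow$ (i), I would argue contrapositively: suppose $w_1 \leq w_2$ with $w_1$ compact relative to $w_2$ and $w_1 \neq w_2$, but every $x$ with $w_1 \leq x \leq w_2$ is idempotent. I would then exhibit a disconnection of $\{x \in C : x \leq w_2\}$. The natural candidate is to separate this set according to whether the support idempotent $\s(x)$ lies below $w_1$ or not. More precisely, using that $w_1$ is compact relative to $w_2$, Lemma \ref{opensetsupport} (applied inside the extended Choquet cone $\{x \in C : x \leq w_2\}$, where $w_1$ is a genuine compact idempotent) shows that $U = \{x \leq w_2 : w_1 \gg \s(x)\}$ is relatively open. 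The hypothesis that the interval $[w_1,w_2]$ collapses to idempotents should force a clean complementary description, yielding two disjoint nonempty open sets whose union is the whole interval-sublattice.

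For the reverse implication (i) $\Rightarrow$ (ii), I would again use contraposition: assuming $\{x \in C : x \leq w\}$ is disconnected for some idempotent $w$, I would produce $w_1 < w_2 \leq w$ with $w_1$ compact relative to $w_2$ and the interval $[w_1,w_2]$ containing only idempotents. The idea is that a disconnection $\{x \leq w\} = A \sqcup B$ into closed convex pieces (using local convexity, I expect the components can be taken convex) separates $0$ from $w$; following the path through support idempotents, I would locate a ``jump'' where the closure of one component meets the other, and the endpoints of that jump should furnish the required idempotent pair. Here the support idempotent map $\s(\cdot)$ and Lemma \ref{supportmax} are the main tools: on a closed convex piece $\s$ attains a maximum, and comparing these maxima across the disconnection should isolate the offending interval.

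For the final assertion—that $x$ in (i) may be chosen with $\s(x) = w_1$—once a non-idempotent $x$ with $w_1 \leq x \leq w_2$ is found, I would replace it by a suitable element of the cancellative subcone. Since $w_1 \leq x$, we have $\s(x) \geq w_1$; the key observation is that $x + w_1$ has support idempotent related to $\s(x)$, and one can push $x$ into $C_{w_1} = \{y : \s(y) = w_1\}$ by an averaging or limiting argument of the form $\lim_n \frac{1}{n}x$ combined with adding back a controlled amount, exploiting the distributivity of addition over the lattice operations. The point is that strong connectedness is an interval-by-interval condition, so once non-idempotents exist in $[w_1,w_2]$ at all, the cancellative structure of $C_{w_1}$ guaranteed by Theorem \ref{compactbase} lets me select one sitting exactly on the base above $w_1$.

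The main obstacle I anticipate is the (i) $\Rightarrow$ (ii) direction, specifically converting a topological disconnection into the order-theoretic data of a compact-relative idempotent pair. Connectedness is a global topological property while compactness-relative-to-$w$ is a lattice-theoretic way-below condition, and bridging them requires carefully controlling how $\s(\cdot)$ behaves along the boundary between the two clopen pieces. The technical heart will be showing that the relevant infimum of support idempotents is actually attained as a \emph{compact} relative idempotent rather than merely as an idempotent, and this is exactly where the algebraic-lattice structure of $\Idem(C)$ and the openness result of Lemma \ref{opensetsupport} must be combined with the compactness of the ambient cone.
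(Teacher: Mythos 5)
Your contrapositive framework is the same as the paper's, and your sketch of (ii)\,$\Rightarrow$\,(i) is essentially the paper's argument: one disconnects $\{x\in C: x\le w_2\}$ into $U_1=\{x\le w_2:\ \s(x)\le w_1\}$, which is open by Lemma \ref{opensetsupport} applied in the cone $\{x\le w_2\}$ where $w_1$ is a genuine compact idempotent, and $U_2=\{x\le w_2:\ x\nleq w_1\}$. The one point you leave to hope --- the ``clean complementary description'', i.e.\ that $U_1$ and $U_2$ are disjoint and cover --- does require an argument, because a priori there could be $x\le w_2$ with $\s(x)\le w_1$ but $x\nleq w_1$. It follows from the standing hypothesis: such an $x$ gives $w_1\le x+w_1\le w_2$, so $x+w_1$ is an idempotent, hence equals its own support $\s(x+w_1)=\s(x)+w_1=w_1$ (Lemma \ref{supportlemma}(i)), forcing $x\le w_1$. (The paper instead first uses Zorn's lemma to shrink $w_2$ to make $[w_1,w_2]=\{w_1,w_2\}$.) This half is fine.

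The genuine gaps are in the other direction and in the ``moreover'' clause. For not\,(ii)\,$\Rightarrow$\,not\,(i), your plan --- take the pieces of the disconnection to be closed and convex, follow $\s(\cdot)$ to a ``jump'', and compare maxima via Lemma \ref{supportmax} --- does not go through: the two clopen pieces of a disconnection need not be convex, so Lemma \ref{supportmax} does not apply to them, and nothing in the sketch actually produces the required pair $(w_1,w_2)$. The paper's mechanism is different, and none of its key steps appear in your proposal: reduce to $w=\infty$ and let $U$ be the clopen piece with $\infty\notin U$; since $U$ is closed and chains converge to their suprema, Zorn's lemma yields an order-maximal $v\in U$; the ray $t\mapsto tv$ is connected, so $2v=v$ and $v$ is an idempotent; a decreasing net of idempotents strictly above $v$ with infimum $v$ must lie in the closed set $U^c$ (by maximality of $v$ in $U$) and would force $v\in U^c$, so $v$ is compact; and any $x\ge v$ with $\s(x)=v$ is joined to $v$ by the connected ray $t\mapsto tx$, hence lies in $U$ and equals $v$ by maximality. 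Finally, your route to $\s(x)=w_1$ cannot work as stated: $\lim_n\frac1n x=\s(x)$ is itself an idempotent, and neither rescaling nor adding $w_1$ changes the support when $\s(x)>w_1$, so there is no way to ``push'' a given non-idempotent into $C_{w_1}$; Theorem \ref{compactbase} does not help here. The paper instead first replaces $w_2$ by a Zorn-minimal $w_2'$ with $w_1\le w_2'\le w_2$ and $w_2'\ne w_1$, so that every element of $[w_1,w_2']$ has support in $\{w_1,w_2'\}$; a non-idempotent $x$ there cannot have $\s(x)=w_2'$ (that would force $x=w_2'$), hence $\s(x)=w_1$.
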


\begin{proof}
We show that the negations of (i) and (ii) are equivalent.

Not (ii)$\Rightarrow$ not (i): Suppose that $\{x\in C:x\leq w\}$ is disconnected for some idempotent $w$. Working in the cone  $\{x\in C:x\leq w\}$ as the starting extended Choquet  cone, we may assume without loss of generality that $w=\infty$ (the largest element of $C$). Let $U$ and $V$ be open disjoint sets whose union is $C$. Assume that $\infty\notin U$. Observe that totally ordered subsets of $U$ have an upper bound:
if $(x_i)_i$ is a chain then $x_i\to \sup_i x_i$, and since $U$ is closed, $\sup x_i\in U$. By Zorn's lemma, $U$ contains a maximal element $v$. Since $2v$ is connected to $v$ by the path $t\mapsto tv$ with $t\in [1,2]$, we must have that $2v=v$, i.e., $v$ is an idempotent. Let's show that $v$ is compact: Let  $(v_i)_i$ be a  decreasing net of idempotents with infimum $v$. Suppose, for the sake of contradiction, that $v_i\neq v$ for all $i$. Then $v_i\in U^c$ for all $i$. Since $U^c$ is closed and $v_i\to v$, $v\in U^c$, which is a contradiction. Thus, $v$ is compact. Let $x\in C$
be such that $v\leq x\leq \infty$. If $\s(x)=\infty$, then $x=\infty$. Suppose that  $\s(x)=v$. Since $x$ is connected to $v$ by the path $t\mapsto tx$, $t\in(0,1]$, we have $x\in U$. But $v$ is maximal in $U$. Thus, $x=v$. This proves not (i).  

Not (i)$\Rightarrow$ not (ii): Suppose  that   there exist $w_1,w_2\in \Idem(C)$ such that $w_1<w_2$, $w_1$ is relatively compact in $w_2$, and there is no non-idempotent $x\in C$ such that $w_1\leq x\leq w_2$. By Zorn's lemma, we can choose $w_2$ minimal among the idempotents such that $w_1\leq w_2$ and $w_1\neq w_2$. Then 
\begin{equation}\label{gap}
w_1\leq x\leq w_2\Rightarrow x\in \{w_1,w_2\}\hbox{ for all }x\in C.
\end{equation}
Let us show that $\{x\in C:x\leq w_2\}$ is disconnected. Let $U_{1}=\{x\in C:x\leq w_1\}$ and $U_2=\{x\in C:x\nleq w_1\}$. These sets are clearly disjoint, non-empty ($w_1\in U_1$ and $w_2\in U_2$), and cover $\{x\in C:x\leq w_2\}$. It is also clear that $U_2$ is open in $C$. Let's consider $U_1$. By \eqref{gap}, $x\in U_1$ if and only if $\s(x)\leq w_1$ and $x\leq w_2$. Further, since $w_1$ is a compact idempotent in the extended Choquet cone $\{z\in C:z\leq w_2\}$, the set $U_1$ may be described as all  $x$ in the cone  $\{z\in C:z\leq w_2\}$ such that $w_1\gg \s(x)$, where the relation $\gg$ is taken in the idempotent lattice of the cone $\{z\in C:z\leq w_2\}$. Thus, by Lemma \ref{opensetsupport} applied in the extended Choquet cone  $\{z\in C:z\leq w_2\}$, the set $U_1$ is (relatively) open in $\{x\in C:x\leq w_2\}$.

Finally, let us argue that $x$ in (i) may be chosen such that $\s(x)=w_1$: Starting from $w_1\leq w_2$, with $w_1$ relatively compact in $w_2$, choose $w_2'$ minimal element in  $\{w\in \Idem(C):w_1\leq w\leq w_2,\, w\neq w_1\}$, which exists by Zorn's lemma. Let $x\in C$ be a non-idempotent such that $w_1\leq x\leq w_2'$. Then $\s(x)\in \{w_1,w_2'\}$, but we cannot have $\s(x)=w_2'$, since this entails that $x=w_2'$. So $\s(x)=w_1$.
\end{proof}

\begin{definition} Let $C$ be an extended Choquet simplex. Let us say that $C$ is strongly connected if it satisfies either one of the equivalent properties listed in Theorem \ref{kernelpropertythm}.
\end{definition}

\begin{proposition}\label{coneslimits}
If $C$ is a projective limit of extended Choquet cones of the form $[0,\infty]^n$, then $C$ is strongly connected and has an abundance of compact idempotents.
\end{proposition}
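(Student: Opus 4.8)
The plan is to compute the idempotent lattice of the limit and then check the two properties directly, using criterion (ii) of Theorem~\ref{kernelpropertythm} for strong connectedness. Write $C=\varprojlim_i C_i$ with $C_i=[0,\infty]^{n_i}$, bonding maps $\varphi_{i,j}$, and projections $\pi_i\colon C\to C_i$. An element of $[0,\infty]^n$ is idempotent precisely when each coordinate lies in $\{0,\infty\}$, so $\Idem(C_i)=\{0,\infty\}^{n_i}$ is a finite lattice; since idempotency and the algebraic order are coordinatewise conditions on $\prod_i C_i$, the set $\Idem(C)$ is the inverse limit $\varprojlim_i\Idem(C_i)$ of finite lattices with the order inherited coordinatewise, and it is a complete lattice because $t\mapsto 2t$ is an order isomorphism. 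Because $w_1\vee w_2=w_1+w_2$ for idempotents and the maps $\varphi_{i,j}$, $\pi_i$ are additive and continuous, these maps preserve \emph{suprema} of idempotents (finite joins because join equals sum, directed suprema by continuity) and also preserve infima of \emph{downward directed} nets; I emphasize at the outset that they need \emph{not} preserve arbitrary infima of idempotents (for instance $(x,y)\mapsto x+y$ on $[0,\infty]^2$ sends $(\infty,0)\wedge(0,\infty)=0$ to $\infty$), so the entire argument must be organized around suprema.

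For strong connectedness, fix $w=(w_i)_i\in\Idem(C)$. Since the order is coordinatewise, $\{x\in C:x\leq w\}$ is exactly the inverse limit of the sets $K_i=\{y\in C_i:y\leq w_i\}$, with bonding maps the restrictions of $\varphi_{i,j}$ (well defined because $\varphi_{i,j}(y)\leq\varphi_{i,j}(w_i)=w_j$). Each $K_i$ is homeomorphic to $[0,\infty]^{m_i}$, where $m_i$ is the number of coordinates of $w_i$ equal to $\infty$, hence is a nonempty compact connected Hausdorff space. The inverse limit, over a directed index set, of nonempty compact connected Hausdorff spaces is again nonempty, compact, and connected (a standard fact on inverse limits of continua), so $\{x\in C:x\leq w\}$ is connected for every idempotent $w$, which is precisely strong connectedness.

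For the abundance of compact idempotents I would produce, for each index $i$, an upper adjoint to $\pi_i$ and show that its values are compact. As $\pi_i\colon\Idem(C)\to\Idem(C_i)$ preserves arbitrary suprema between complete lattices, it admits an upper adjoint $u_i\colon\Idem(C_i)\to\Idem(C)$ characterized by $\pi_i(v)\leq a\iff v\leq u_i(a)$, namely $u_i(a)=\max\{v\in\Idem(C):\pi_i(v)\leq a\}$. Each $u_i(a)$ is a compact idempotent: if $(v_\alpha)_\alpha$ is a decreasing net in $\Idem(C)$ with $\inf_\alpha v_\alpha\leq u_i(a)$, then applying $\pi_i$, which preserves this downward directed infimum, gives $\inf_\alpha\pi_i(v_\alpha)\leq\pi_i(u_i(a))\leq a$; since $\Idem(C_i)$ is finite the decreasing net $(\pi_i(v_\alpha))_\alpha$ is eventually equal to some $b\leq a$, so $\pi_i(v_{\alpha_0})\leq a$ and hence $v_{\alpha_0}\leq u_i(a)$. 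It then remains to show that every $w\in\Idem(C)$ is an infimum of such compact idempotents, via $w=\inf_i u_i(\pi_i(w))$: the inequality $w\leq u_i(\pi_i(w))$ is immediate from $\pi_i(w)\leq\pi_i(w)$; conversely, the net $(u_i(\pi_i(w)))_i$ is downward directed (using $\pi_i=\varphi_{i',i}\circ\pi_{i'}$), so applying $\pi_j$ to this directed infimum and reading off the $i=j$ term yields $\pi_j(\inf_i u_i(\pi_i(w)))\leq\pi_j(w)$ for every $j$, whence $\inf_i u_i(\pi_i(w))\leq w$. Thus $(\Idem(C),\leq^{\mathrm{op}})$ is algebraic.

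The main obstacle is exactly the asymmetry flagged in the first paragraph: the bonding maps preserve suprema but not infima of idempotents, while algebraicity under the opposite order is a statement about representing elements as infima. Passing to the upper adjoint $u_i$ is the device that resolves this, converting the available supremum-preservation of $\pi_i$ into control over the relevant infima; verifying that the $u_i(a)$ are genuinely compact (rather than merely large) and that they approximate every idempotent from above is where the finiteness of each $\Idem(C_i)$ and the continuity of the $\pi_i$ are both used.
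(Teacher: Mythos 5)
Your proof is correct and follows essentially the same route as the paper: strong connectedness via the fact that an inverse limit of continua is a continuum applied to $\{x\in C_i:x\le w_i\}\cong[0,\infty]^{m_i}$, and algebraicity of $(\Idem(C),\leq^{\mathrm{op}})$ via the direct construction, for each index, of the largest idempotent of $C$ whose projection is dominated by the given coordinate, with compactness extracted from the finiteness of $\Idem(C_i)$. Your upper-adjoint $u_i(a)$ is just a Galois-connection repackaging of the paper's element $w^{(k)}$ with $(w^{(k)})_i=\sup\{z\in C_i:\phi_{i,k}(z)=w_k\}$, so the substance is identical.
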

\begin{proof}
Suppose that $C=\varprojlim \{C_i,\phi_{i,j}:i,j\in I\}$, where $C_i\cong [0,\infty]^{n_i}$ for all $i\in I$. 
A projective limit of continua (compact Hausdorff connected spaces) is again a continuum. Since each $C_i$ is a continuum, so is $C$. In particular,  $C$ is connected. If $w\in \Idem(C)$, with $w=(w_i)_i\in \prod_i C_i$, then 
\[
\{x\in C:x\leq w\}=\varprojlim \{x\in C_i:x\leq w_i\}.
\] 
Thus, the same argument shows that $\{x\in C:x\leq w\}$ is connected.

The lattice of idempotent elements of $C_i$ is finite, whence algebraic under the opposite order, for all $i$. Further, by additivity and continuity, the maps $\phi_{i,j}$ preserve directed infima and arbitrary suprema (i.e., directed suprema and arbitrary infima under the opposite order). That $\Idem(C)$ is algebraic under the opposite order can then be deduced from the fact that a projective limit of algebraic lattices is again an algebraic lattice, where the morphisms preserve  directed suprema and arbitrary infima. Let us give a direct argument instead: Let $w\in \Idem(C)$, with $w=(w_i)_i\in \prod_i C_i$. For each index $k\in I$ define $w^{(k)} \in \prod_i C_i$ as the unique element in
$C$ such that   
\[
(w^{(k)})_i=
\sup\{z\in C_i:\phi_{i,k}(z)=w_k\}\hbox{ for all }i\geq k.
\]
It is not hard to show that $(w^{(k)})_{k\in I}$ is a decreasing net in $\Idem(C)$ with infimum $w$. Moreover, from the compactness of $w_k\in \Idem(C_k)$ we deduce that $w^{(k)}\in \Idem(C)$ is compact for all $k\in I$. Thus, $\Idem(C)$
is an algebraic lattice under the opposite order.
\end{proof}
 
 \section{Cones of traces and functionals}\label{sec:conesfromfunctionals}
 Here we review various constructions giving rise to extended Choquet cones.
 
 Let $A$ be a C*-algebra. Let $A_+$ denote the cone of positive elements of $A$. A map $\tau\colon A_+\to [0,\infty]$
 is called a trace if it maps 0 to 0, it is additive, homogeneous with respect to scalar multiplication, and satisfies $\tau(x^*x)=\tau(xx^*)$ for all $x\in A$. The set of all lower semicontinuous traces on $A$ is denoted by $T(A)$. It is endowed  with the pointwise operations of addition and scalar multiplication. $T(A)$ is endowed with the topology such that a net $(\tau_i)_i$ in $T(A)$ converges to $\tau\in T(A)$ if 
 \[
\limsup \tau_i((a-\epsilon)_+) \leq \tau(a)\leq \liminf \tau_i(a)
 \]
for all $a\in A_+$ and $\epsilon>0$. By \cite[Theorems 3.3 and 3.7]{ERS}, $T(A)$ is an extended Choquet cone.
  
\begin{proposition}\label{cstarECC}
Let $A$ be a C*-algebra.
\begin{enumerate}[\rm (i)]
\item
If the primitive spectrum of $A$ has a basis of compact open sets, then $T(A)$ has an abundance of compact idempotents. In particular, this holds if $A$ has real rank zero.

\item
Suppose that  for all $J\subsetneq I\subseteq A$, closed two-sided ideals of $A$ such that $I/J$ has compact primitive spectrum, there exists a  non-zero lower semicontinuous densely finite trace on $I/J$. Then $T(A)$ is strongly connected. In particular, this holds if $A$ has stable rank one and is exact. 
\end{enumerate} 
\end{proposition}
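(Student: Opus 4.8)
The unifying tool is the order-reversing bijection $I\mapsto\tau_I$ between the lattice of closed two-sided ideals of $A$ and $(\Idem(T(A)),\leq^{\mathrm{op}})$ recalled in the introduction. To prove (i), I would first translate the abstract way-below relation on idempotents into the ideal lattice. If $w=\tau_I$ and $(v_i)_i=(\tau_{I_i})_i$ is a decreasing net of idempotents, then $(I_i)_i$ is upward directed and $\inf_i v_i=\tau_J$ with $J=\overline{\bigcup_i I_i}$; hence $\inf_i v_i\leq w$ reads $I\subseteq\overline{\bigcup_i I_i}$ and $v_{i_0}\leq w$ reads $I\subseteq I_{i_0}$. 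Thus $\tau_I$ is a compact idempotent precisely when $I$ is a compact element of the ideal lattice, i.e. a compact-open subset under the identification of the ideal lattice with $\mathcal{O}(\mathrm{Prim}(A))$. Consequently $\Idem(T(A))$ is algebraic under $\leq^{\mathrm{op}}$ iff every open subset of $\mathrm{Prim}(A)$ is a union of compact-open sets, which is the basis condition; this is the main assertion. For the real rank zero case I would show that $\overline{ApA}$ is a compact ideal for every projection $p$: if $\overline{ApA}\subseteq\overline{\bigcup_i I_i}$ then $p$ lies in the closure, so $\|p-x\|<1$ for some self-adjoint $x\in I_{i_0}$, whence $pxp$ is invertible in $pAp$ and $p=(pxp)^{-1}(pxp)\in I_{i_0}$. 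As real rank zero passes to ideals, each ideal has an approximate unit of projections and is therefore the supremum of the compact ideals it contains.

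For (ii) I would verify condition (i) of Theorem \ref{kernelpropertythm}. Let $w_1\leq w_2$ be idempotents with $w_1\neq w_2$ and $w_1$ compact relative to $w_2$, and write $w_k=\tau_{I_k}$, so that $I_2\subsetneq I_1$. The subcone $\{x\in T(A):x\leq w_2\}$ consists of the traces vanishing on $(I_2)_+$ and is naturally $T(A/I_2)$, inside which $w_1$ corresponds to the ideal $I_1/I_2$; by the translation of part (i), ``$w_1$ compact relative to $w_2$'' means exactly that $I_1/I_2$ is a compact ideal of $A/I_2$, equivalently that $\mathrm{Prim}(I_1/I_2)$ is compact. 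The hypothesis then supplies a non-zero, densely finite, lower semicontinuous trace $\rho$ on $I_1/I_2$. Letting $\pi\colon A\to A/I_2$ be the quotient map, I define $x\colon A_+\to[0,\infty]$ by $x(a)=\rho(\pi(a))$ if $\pi(a)\in(I_1/I_2)_+$ and $x(a)=\infty$ otherwise.

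I would then check that $x$ is a lower semicontinuous trace with $w_1\leq x\leq w_2$. Additivity across the boundary uses that $I_1/I_2$ is hereditary, so that for positive $a,b$ with $a+b$ in the ideal both $a$ and $b$ lie in it; the trace identity uses that $c^*c$ and $cc^*$ generate the same ideal, so that $c^*c\in I_1/I_2$ iff $cc^*\in I_1/I_2$; and lower semicontinuity holds because $I_1/I_2$ is norm-closed, so that a net converging outside the ideal is eventually outside it (value $\infty$), while inside the ideal lower semicontinuity is inherited from $\rho$. Since $\rho$ is non-zero and densely finite it attains a value in $(0,\infty)$ (else lower semicontinuity and additivity would force $\rho=0$ on a dense hereditary subalgebra, hence everywhere), so $x$ is not $\{0,\infty\}$-valued and therefore not idempotent; one even computes $\s(x)=\lim_n\frac1n x=w_1$, giving the refinement in Theorem \ref{kernelpropertythm}. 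This verifies the criterion, so $T(A)$ is strongly connected. For the final clause I would use that stable rank one and exactness pass to every ideal-quotient $I/J$, that stable rank one entails stable finiteness, and that a non-zero exact stably finite C*-algebra with compact primitive spectrum possesses a full element and hence, combining Blackadar--Handelman's existence of a quasitrace with Haagerup's theorem identifying quasitraces on exact algebras with traces, a non-zero densely finite lower semicontinuous trace.

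The two steps I expect to require the most care are the identification of the purely order-theoretic relation ``$w_1$ compact relative to $w_2$'' with compactness of $\mathrm{Prim}(I_1/I_2)$, and the verification that the extend-by-$\infty$ functional is lower semicontinuous, the boundary behaviour being the only delicate point; the existence results invoked in the stable rank one, exact case are used as a black box rather than proved here.
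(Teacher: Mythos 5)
Your proof is correct and follows essentially the same route as the paper: part (i) via the order-reversing bijection between closed two-sided ideals and idempotents of $T(A)$ together with the identification of the ideal lattice with the open-set lattice of the primitive spectrum, and part (ii) by verifying condition (i) of Theorem \ref{kernelpropertythm} through the extension by $\infty$ of a densely finite trace on $I_1/I_2$. You supply somewhat more detail than the paper (the projection argument for real rank zero, the verification that the extended functional is a lower semicontinuous trace, and a sketch via Blackadar--Handelman and Haagerup where the paper simply cites the literature), but the underlying argument is the same.
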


\begin{proof}
(i) The lattice of closed two-sided ideals of $A$ is in order reversing bijection with the lattice of idempotents of $T(A)$ via the assignment $I\mapsto \tau_I$, where 
\[
\tau_I(a):=\begin{cases}
0&\hbox{ for }a\in I_+,\\
\infty&\hbox{otherwise.}
\end{cases}
\]
On the other hand, the lattice of closed two-sided ideals of $A$ is isomorphic to the lattice of open sets of the primitive spectrum of $A$ (\cite[Theorem 4.1.3]{pedersen}). Thus, the lattice of idempotents of $T(A)$ is algebraic (under the opposite order) if and only if the lattice of open sets of the primitive spectrum is algebraic. The latter is equivalent to the existence of a basis of compact open sets for the topology.

(ii) Let us check that $T(A)$ satisfies condition (i) of Theorem \ref{kernelpropertythm}. Recall that idempotents in $T(A)$ have the form $\tau_I$, where $I$ is a closed two-sided ideal. Let $I$ and $J$ be (closed, two-sided) ideals of $A$, with $J\subseteq I$, so that $\tau_I\leq \tau_J$. The property that $\tau_I$ is compact relative to $\tau_J$ means that  if $(I_i)_i$ is an upward directed net of ideals such that $J\subseteq I_i\subseteq I$ for all $i$ and $I=\bigcup I_i$, then $I=I_{i_0}$ for some $i_0$. This, in turn, is equivalent to $I/J$ having compact primitive spectrum.  By assumption, there exists  $\tau\in T(I/J)$ that is densely finite and non-zero. Pre-composed with the quotient map $\pi\colon I\to I/J$ (which maps $I_+$ onto $(I/J)_+$), $\tau$ gives rise to a trace $\tau\circ\pi\in T(I)$. Let $\tilde\tau$ be the extension of $\tau\circ\pi$ to $A_+$ such that $\tilde\tau(a)=\infty$ for all $a\in A_+\backslash I_+$. Then $\tau_I\leq \tilde \tau\leq \tau_J$ and $\tilde\tau$ is not an idempotent, as it attains values other than $\{0,\infty\}$. This proves that $T(A)$ is strongly connected.

Suppose now that $A$ has stable rank one and is exact. By the arguments from the previous paragraph, it suffices to show that if $I/J$ is a non-trivial ideal-quotient with compact primitive spectrum, then there is a nontrivial lower semicontinuous densely finite trace on $I/J$.  Observe that $I/J$ has stable rank one and is exact, since both properties pass to ideals and quotients. An exact C*-algebra of stable rank one  with compact primitive spectrum always has a nonzero, densely finite lower semicontinuous trace;  see \cite[Theorem 2.15]{rordam-cones}. 
\end{proof}


Let $(G,G_+)$ be a dimension group, i.e., an ordered abelian group that is unperforated and has the Riesz refinement property. Let $\Hom(G_+,[0,\infty])$ denote the set of all $[0,\infty]$-valued monoid morphisms on $G_+$ (i.e.,  $\lambda\colon G_+\to [0,\infty]$ additive and mapping 0 to 0). Endow $\Hom(G_+,[0,\infty])$ with pointwise cone operations and with the topology of pointwise convergence.

\begin{proposition}\label{HomG}
Let $G$ be a dimension group. Then $\Hom(G_+,[0,\infty])$ is an extended Choquet cone that is strongly connected and has an abundance of compact idempotents.
\end{proposition}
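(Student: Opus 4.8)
The plan is to realize $\Hom(G_+,[0,\infty])$ as a projective limit of cones of the form $[0,\infty]^n$ and then read off the desired properties from Proposition \ref{coneslimits}, using the already-established fact that the category of extended Choquet cones admits projective limits.

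First I would express the dimension group as a direct limit of simplicial groups. By the Effros--Handelman--Shen theorem in the form valid for arbitrary (not necessarily countable) dimension groups---the countable case being \cite{EHS}, the general directed version being standard---there is an upward directed set $I$ and a direct system of simplicial groups $(\Z^{n_i},(\Z^{n_i})_+)$ with positive connecting homomorphisms $\phi_{i,j}\colon \Z^{n_i}\to\Z^{n_j}$ (for $i\leq j$) such that $G=\varinjlim_i \Z^{n_i}$ as ordered groups, with $I$ countable when $G$ is. Next I would check that the positive cone passes to the limit, that is, $G_+=\varinjlim_i (\Z^{n_i})_+$ in the category of commutative monoids: the map from the monoid colimit onto $G_+$ is surjective because every positive element of $G$ is the image $\phi_i(x)$ of a positive $x$ at some stage, and it is injective because two positive elements that become equal in $G$ already become equal at some finite stage $\Z^{n_k}$, where both sides lie in $(\Z^{n_k})_+$.

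Then I would apply the contravariant functor $\Hom(-,[0,\infty])$. Since monoid morphisms out of a colimit are computed as the limit of the morphism sets, $\Hom(G_+,[0,\infty])=\varprojlim_i \Hom((\Z^{n_i})_+,[0,\infty])$, the connecting maps being precomposition with the $\phi_{i,j}$. A monoid morphism $\N^{n}\to[0,\infty]$ is freely determined by its values on the standard generators, so evaluation at $e_1,\dots,e_{n_i}$ yields a cone isomorphism $\Hom((\Z^{n_i})_+,[0,\infty])=\Hom(\N^{n_i},[0,\infty])\cong[0,\infty]^{n_i}$. I would then verify that this is an identification of projective systems within the category of extended Choquet cones: the cone operations on $\Hom(G_+,[0,\infty])$ are pointwise, hence agree coordinatewise with those on $[0,\infty]^{n_i}$, and the topology of pointwise convergence coincides with the projective-limit (product) topology, both being generated by the evaluations $\lambda\mapsto\lambda(g)$, $g\in G_+$. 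Indeed, every such $g$ lifts to a nonnegative-integer combination of the generators $\phi_i(e_k)$, and $(t_1,\dots,t_{n_i})\mapsto\sum_k a_k t_k$ is continuous on $[0,\infty]^{n_i}$ for $a_k\in\N$, so the two topologies generate each other.

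Having identified $\Hom(G_+,[0,\infty])$ with a projective limit of cones $[0,\infty]^{n_i}$, the conclusion follows immediately: by the theorem that the category of extended Choquet cones has projective limits it is itself an extended Choquet cone, and by Proposition \ref{coneslimits} it is strongly connected and has an abundance of compact idempotents. The one genuinely load-bearing input, and the step I would take most care with, is the general Effros--Handelman--Shen representation together with the verification that $G_+$ is the monoid colimit of the $(\Z^{n_i})_+$; everything downstream is bookkeeping that matches the pointwise cone structure and the pointwise-convergence topology against their product counterparts.
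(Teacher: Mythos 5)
Your proof is correct, but it is not the proof the paper gives: it is exactly the alternative route that the paper acknowledges in one sentence (``we can first express $(G,G_+)$ as an inductive limit of $(\Z^n,\Z_+^n)$ using the Effros--Handelman--Shen theorem, apply the functor $\Hom(\cdot,[0,\infty])$, and then apply Proposition \ref{coneslimits}'') and then deliberately declines to carry out. The paper's actual argument avoids the representation theorem entirely: the extended Choquet cone structure comes from Wehrung's theorem that $\Hom(G_+,[0,\infty])$ is a complete positively ordered monoid together with its realization as a closed convex subset of $[0,\infty]^{G_+}$; the abundance of compact idempotents comes from the order-reversing bijection between idempotents and order ideals of $G$ plus the algebraicity of the ideal lattice of any ordered group; and strong connectedness comes from observing that relative compactness of $\lambda_I$ in $\lambda_J$ forces $I/J$ to be singly generated, hence to admit a finite nonzero functional by \cite[Theorem 1.4]{EHS}, which extends to a non-idempotent trapped between $\lambda_I$ and $\lambda_J$. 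Your route instead invests in the full directed-limit form of the Effros--Handelman--Shen theorem (which, incidentally, is \cite[Theorem 2.2]{EHS} itself --- the general directed version is the one they prove, so your hedge about the ``countable case'' is unnecessary) plus the bookkeeping that $G_+$ is the monoid colimit of the $(\Z^{n_i})_+$ and that the pointwise topology agrees with the product topology; that bookkeeping is all done correctly, though it is worth one extra sentence confirming that the connecting maps $\lambda\mapsto\lambda\circ\phi_{i,j}$ become linear continuous maps $[0,\infty]^{n_j}\to[0,\infty]^{n_i}$ (transposes of nonnegative integer matrices), so the projective system really lives in the category of extended Choquet cones. What your approach buys is strictly more: you establish condition (iii) of Theorem \ref{mainchar} for $\Hom(G_+,[0,\infty])$, i.e.\ essentially the implication (ii)$\Rightarrow$(iii), with (iv) then falling out of Proposition \ref{coneslimits}. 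What the paper's direct argument buys is independence from the representation theorem and an explicit, concrete description of the idempotents and of the witnesses for strong connectedness.
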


\begin{proof}
By \cite[Theorem 2.33]{wehrung}, $\Hom(G_+,[0,\infty])$ is a complete positively ordered monoid, which entails that
it is a complete lattice and that addition distributes over $\wedge$ and $\vee$. The topology on $\Hom(G_+,[0,\infty])$ is that induced by its inclusion in $[0,\infty]^{G_+}$. Since the latter  is compact and Hausdorff, so is $\Hom(G_+,[0,\infty])$. Further, since $\Hom(G_+,[0,\infty])$ is a convex subset of $[0,\infty]^{G_+}$, the induced topology is locally convex. Thus, $\Hom(G_+,[0,\infty])$ is an extended Choquet cone. To see that it is strongly connected and has an abundance of compact idempotents, we can first express $(G,G_+)$ as an inductive limit of $(\Z^n,\Z_+^n)$ using the Effros-Handelmann-Shen theorem (\cite[Theorem 2.2]{EHS}), apply the functor $\mathrm{Hom}(\cdot,[0,\infty])$ to this limit, and then apply Proposition \ref{coneslimits}. We give a direct argument in the paragraphs below.

A subgroup $I\subseteq G$ is an order ideal if $I_+ :=G_+\cap I$ is a hereditary set and $I=I_+-I_+$. Idempotent elements of $\Hom(G_+,[0,\infty])$ have the form $\lambda_I(g)=0$ if $g\in I_+$ and $\lambda_I(g)=\infty$ if $g\in G_+\backslash I_+$, for some ideal $I$.  Moreover, the map $I\mapsto \lambda_I$ is an order reversing bijection between the two lattices. It is well known that the lattice of ideals of an ordered group is algebraic. Thus, $\Hom(G_+,[0,\infty])$ has abundance of compact idempotents. 

Let us now prove strong connectedness. Let $I,J\subseteq G$ be order ideals such that $J\subsetneq I$ and $\lambda_I$ is compact relative to  $\lambda_J$. In this case, this means that $I/J$ is finitely (thus, singly) generated.  Thus, it has a finite nonzero functional $\lambda\colon (I/J)_+\to [0,\infty)$ (e.g., by \cite[Theorem 1.4]{EHS}). As in the proof of Proposition \ref{cstarECC} (ii), we define a functional on all of $G_+$ by pre-composing $\lambda$ with the quotient map $I\mapsto I/J$ and setting it equal to $\infty$ on $G_+\backslash I_+$. This produces a functional $\tilde \lambda\in \Hom(G_+,[0,\infty])$ such that $\lambda_I\leq \tilde\lambda\leq \lambda_J$ and $\tilde\lambda$ is not an idempotent.
\end{proof}

Yet another construction yielding an extended Choquet cone is the dual of a Cu-semigroup. Let us first briefly recall the definition of a Cu-semigroup. Let $S$ be a positively ordered monoid. Given $x,y\in S$, let us write $x\ll y$ (read ``$x$ is way below $y$'') if whenever $(y_n)_{n=1}^\infty$ is an increasing sequence in $S$ such that $y\leq \sup_n y_n$, there exists $n_0$ such that $x\leq y_{n_0}$. 

We call $S$ a Cu-semigroup if it satisfies the following axioms:
\begin{enumerate}
	\item[O1.] For every increasing sequence $(x_n)_n$  in $S$, the supremum $\sup_{n}x_n$ exists.
	\item[O2.] For every $x\in S$ there exists a sequence $(x_n)_n$ in $S$ such that $x_n\ll x_{n+1}$ for all $n\in\mathbb{N}$ and $x=\sup_n x_n$.
	\item[O3.] If $(x_n)_n$ and $(y_n)_n$ are increasing sequences in $S$, then $\sup_n(x_n+y_n)=\sup_nx_n+\sup_n y_n$.
	\item[O4.] If $x_i\ll y_i$ for $i=1,2$, then $x_1+x_2\ll y_1+y_2$.
\end{enumerate}

Observe that in our definition of the way-below relation above  we only consider increasing sequences $(y_n)_n$, rather than increasing nets. In the context of Cu-semigroups we always use  the symbol $\ll$ to indicate this sequential version of the way below relation.

Two additional conditions that we often impose on Cu-semigroups are the following: 
\begin{enumerate}
\item[O5.] If $x'\ll x\le y$ then there exists $z$ such that $x'+z\le y\le x+z$.
\item[O6.] If $x,y,z\in S$ are such that $x\le y+z$, then for every $x'\ll x$ there are elements $y',z'\in S$ such that $x'\le y'+z'$, $y'\le x,y$ and $z'\le x,z$.
\end{enumerate}

An ordered monoid map $\lambda\colon S\to [0,\infty]$ is called a functional on $S$ if it preserves the suprema of increasing sequences. The collection of all functionals on $S$, denoted by $F(S)$, is a  cone, with the cone operations defined pointwise. $F(S)$ is endowed with the topology such that a net $(\lambda_i)_{i\in I}$ in $F(S)$ converges to a functional $\lambda$ if 
\[
\limsup_i \lambda_i(s')\leq \lambda(s) \leq \liminf_i \lambda_i(s) 
\]
for all $s'\ll s$, in $S$. By \cite[Theorem 4.8]{ERS} and \cite[Theorem 4.1.2]{robertFS}, if $S$ is a Cu-semigroup satisfying O5 and O6, then $F(S)$ is an extended Choquet cone. In Section \ref{duality} we address the problem of what conditions on $S$ guarantee that $F(S)$ has an abundance of compact idempotents and is strongly connected.

\section{Functions on an extended Choquet cone}\label{functionspaces}
Throughout this section we let $C$ denote an extended Choquet cone with an abundance of compact idempotents, i.e., such that the lattice $(\Idem(C),\leq^{\mathrm{op}})$ is algebraic. 

\subsection{The spaces $\Lsc(C)$ and $\A(C)$}
Let us denote by $\Lsc(C)$ the set of all  functions  $f\colon C\to [0,\infty]$ that are linear (additive, homogeneous with respect  to scalar multiplication, and mapping 0 to 0) and lower semicontinuous ($f^{-1}((a,\infty])$ is open for any $a\in [0,\infty)$). The linearity of the functions in $\Lsc(C)$ implies that they are also order preserving, for if $x\leq y$ in $C$, then $y=x+z$ for some $z$, and so $f(y)=f(x)+f(z)\geq f(x)$. We endow $\Lsc(C)$ with the operations of pointwise addition and scalar multiplication, and with the pointwise order. $\Lsc(C)$ is thus an ordered cone. 
Further, the pointwise supremum of functions in  $\Lsc(C)$ is again in $\Lsc(C)$; thus, $\Lsc(C)$ is a directed complete
ordered set (dcpo).

Let us denote by $\Lsc_\sigma(C)$ the subset of $\Lsc(C)$ of functions $f\colon C\to [0,\infty]$ for which the set $f^{-1}((a,\infty])$ is $\sigma$-compact---in addition to being open---for all $a\in [0,\infty)$ (equivalently, for $a=1$, by linearity.) We denote by $\A(C)$ the functions in $\Lsc(C)$ that are continuous. Notice that $\A(C)\subseteq \Lsc_\sigma(C)$, since
\[
f^{-1}((a,\infty])=\bigcup_n f^{-1}([a+\frac1n,\infty]),
\]
and the right side is a union of closed (hence, compact) subsets of $C$.

Our goal is to show that every function in $\Lsc(C)$ ($\Lsc_\sigma(C)$) is the supremum of an increasing net (sequence) of functions in $\A(C)$. We achieve this in Theorem \ref{ACsuprema} after a number of preparatory results.

Given $f\in \Lsc(C)$, define its support  $\supp(f)\in C$ as
\[
\supp(f)=\sup\{x\in C:f(x)=0\}.
\]  
Since $f(x)=0\Rightarrow f(2x)=0$, it follows easily that $\supp(f)$ is an idempotent of $C$.

For each $w\in \Idem(C)$, let 
\[
\chi_w(x)=
\begin{cases}
0 & \hbox{ if }x\leq w,\\
\infty & \hbox{otherwise.}
\end{cases}
\]
This is a function in $\Lsc(C)$.

\begin{lemma}\label{inftyfsupp}
We have $\infty\cdot f=\chi_{\supp(f)}$, for all $f\in \Lsc(C)$. (Here
$\infty \cdot f:=\sup_{n\in \N} nf$.)
\end{lemma}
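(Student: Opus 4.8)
The plan is to prove the identity $\infty\cdot f=\chi_{\supp(f)}$ by establishing the two pointwise inequalities, exploiting the definition $\infty\cdot f=\sup_{n\in\N}nf$ together with the fact that $\supp(f)$ is the supremum of the zero set of $f$. First I would fix $x\in C$ and consider two cases according to whether $x\leq\supp(f)$ or not. Throughout, I would use that $f$ is order preserving (as noted in the text), that the zero set $Z:=\{x\in C:f(x)=0\}$ is upward directed (since $f(y_1)=f(y_2)=0$ implies $f(y_1\vee y_2)\leq f(y_1+y_2)=0$, using $y_1\vee y_2\leq y_1+y_2$ and order preservation), and that in a compact algebraically ordered cone an upward directed set converges to its supremum (the Remark following Definition \ref{defExtCho}).

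For the case $x\leq\supp(f)$, I want to show $(\infty\cdot f)(x)=0$, i.e. $f(x)=0$; this matches $\chi_{\supp(f)}(x)=0$. Since $Z$ is upward directed with supremum $\supp(f)$, it converges to $\supp(f)$, and lower semicontinuity of $f$ gives $f(\supp(f))\leq\liminf f(y)=0$ over $y\in Z$, so $f(\supp(f))=0$. Then $x\leq\supp(f)$ and order preservation yield $f(x)\leq f(\supp(f))=0$, hence $f(x)=0$ and $(\infty\cdot f)(x)=\sup_n nf(x)=0$.

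For the case $x\not\leq\supp(f)$, I want $(\infty\cdot f)(x)=\infty$, matching $\chi_{\supp(f)}(x)=\infty$. It suffices to show $f(x)>0$, for then $\sup_n nf(x)=\infty$. Here I would argue by contraposition: if $f(x)=0$ then $x\in Z$, so $x\leq\sup Z=\supp(f)$, contradicting $x\not\leq\supp(f)$. Thus $f(x)>0$ and $(\infty\cdot f)(x)=\infty$.

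The main obstacle, and the only genuinely nonroutine point, is verifying that $f(\supp(f))=0$, i.e. that the supremum of the zero set is itself a zero of $f$; this is where directedness of $Z$ and the interplay between the order-theoretic supremum, topological convergence of directed nets to their supremum, and lower semicontinuity of $f$ all come together. Once that is in hand the two cases are immediate. I would present the directedness of $Z$ and the convergence-to-supremum fact explicitly, since they carry the proof, and keep the rest terse.
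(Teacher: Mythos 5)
Your proposal is correct and follows essentially the same route as the paper: both establish $f(\supp(f))=0$ via the upward directedness of the zero set, its convergence to its supremum, and lower semicontinuity, and then split into the two cases $x\leq\supp(f)$ and $x\nleq\supp(f)$ exactly as you do. The only difference is that you spell out the directedness of the zero set (via $y_1\vee y_2\leq y_1+y_2$ and additivity), which the paper leaves implicit.
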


\begin{proof}
The set $\{x\in C:f(x)=0\}$ is upward directed and converges to its supremum, i.e., to $\supp(f)$. It follows, by the lower semicontinuity of $f$, that $f(\supp(f))=0$.

If $x\leq \supp(f)$, then $f(x)\leq f(\supp(f))=0$. Hence, $(\infty \cdot f)(x)=0$.
If on the other hand  $x\nleq \supp(f)$, then $f(x)\neq 0$, which implies that  $(\infty \cdot f)(x)=\infty$.
We have thus shown that $\infty\cdot f=\chi_{\supp(f)}$.
\end{proof}

Let $w\in C$ be an idempotent.  Define $\A_w(C)=\{f\in \A(C): \supp(f)=w\}$ and
\[
\A_{+}(C_w)=\{f\colon C_w\to [0,\infty):\hbox{  $f$ is continuous, linear, and $f(x)=0\Leftrightarrow x=w$}\}.
\]
(Recall that we have defined $C_w=\{x\in C:\s(x)=w\}$.)

\begin{theorem}\label{ACwbijection}
If $f\in \A(C)$ then $\supp(f)$ is a compact idempotent. Further,  given a compact idempotent $w\in \Idem(C)$, the restriction map $f\mapsto f|_{C_w}$ is an ordered cone isomorphism from $\A_w(C)$ to $\A_{+}(C_w)$.
\end{theorem}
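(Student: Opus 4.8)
The plan is to prove the two assertions separately, obtaining the second by exhibiting an explicit inverse to the restriction map; I expect the only genuine difficulty to be the continuity of that inverse.

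\emph{Support of a continuous function is compact.} Let $f\in\A(C)$ and put $w_0=\supp(f)$, an idempotent. Since $f$ is continuous, $f(w_0)=0$ (as in the proof of Lemma \ref{inftyfsupp}), and by Lemma \ref{inftyfsupp} one has $\{x:f(x)=0\}=\{x:x\le w_0\}$. To verify $w_0\gg w_0$, let $(v_i)_i$ be a decreasing net of idempotents with $\inf_i v_i\le w_0$. Each $v_i$ being idempotent, $f(v_i)=2f(v_i)$ forces $f(v_i)\in\{0,\infty\}$, and $f(v_i)=0$ is equivalent to $v_i\le w_0$. If no $v_i\le w_0$, then $f\equiv\infty$ along the net; but $(v_i)_i$ converges to $\inf_i v_i\le w_0$, the set $f^{-1}(\{\infty\})$ is closed by continuity, and $f(\inf_i v_i)=0$, a contradiction. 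Hence $v_{i_0}\le w_0$ for some $i_0$, so $w_0$ is compact.

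\emph{The restriction map and its inverse.} Fix a compact idempotent $w$ and set $\Omega=\{x\in C:\s(x)\le w\}$. Because $w$ is compact, $w\gg\s(x)\Leftrightarrow\s(x)\le w$, so $\Omega$ is open by Lemma \ref{opensetsupport}; moreover $x\mapsto x+w$ sends $\Omega$ into $C_w$, since there $\s(x+w)=\s(x)\vee w=w$. First, for $f\in\A_w(C)$ the restriction $f|_{C_w}$ lies in $\A_+(C_w)$: it is finite, because $f(x)=\infty$ for some $x\in C_w$ would give, by continuity along $\tfrac1n x\to\s(x)=w$, the value $f(w)=\infty$, contradicting $f(\supp f)=0$; and $f|_{C_w}(x)=0$ forces $x\le\supp f=w$, while $w=\s(x)\le x$, whence $x=w$. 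Next I record the formula recovering $f$ from its restriction, which gives injectivity and suggests the inverse: if $\s(x)\not\le w$, then $\s(x)$ is an idempotent with $\s(x)\not\le\supp f$, so $f(\s(x))\in\{0,\infty\}$ is nonzero and thus $\infty$, giving $f(x)\ge f(\s(x))=\infty$; while if $\s(x)\le w$, then $x+w\in C_w$ and $f(x)=f(x)+f(w)=f(x+w)=f|_{C_w}(x+w)$.

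\emph{Surjectivity and upper semicontinuity.} Given $g\in\A_+(C_w)$, define $f\colon C\to[0,\infty]$ by $f(x)=g(x+w)$ when $\s(x)\le w$ and $f(x)=\infty$ otherwise. Using $tw=w$, $w+w=w$, $\s(tx)=\s(x)$, $\s(x+y)=\s(x)\vee\s(y)$, and that $g$ vanishes only at $w$, one checks that $f$ is linear with $\supp(f)=w$ and $f|_{C_w}=g$. When $C_w=\{w\}$ one has $g=0$ and $f=\chi_w$, which is continuous because $\Omega=\{x\le w\}$ is then clopen; so assume $C_w\neq\{w\}$. Upper semicontinuity of $f$ is routine: $\{f<\infty\}=\Omega$ is open, and for finite $a$ the set $\{f<a\}=\{x\in\Omega:g(x+w)<a\}$ is open because $x\mapsto g(x+w)$ is continuous on $\Omega$, being the composite of $x\mapsto x+w$ with $g$.

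\emph{The main obstacle: lower semicontinuity.} The only real work is lower semicontinuity at the boundary of $\Omega$, and this is exactly where the compact base of Theorem \ref{compactbase} is needed. Fix $a\in[0,\infty)$ and $x_0$ with $f(x_0)>a$; the delicate case is $x_0\notin\Omega$, where I must find a neighborhood of $x_0$ on which $g(\cdot+w)>a$. If no such neighborhood exists, choose a net $y_\beta\to x_0$ in $\Omega$ with $g(y_\beta+w)\le a$, and set $z_\beta=y_\beta+w\to x_0+w=:z_0$, so $z_\beta\in G_a:=\{z\in C_w:g(z)\le a\}$. Let $B$ be the compact base of $C_w$ and $m:=\min_B g$, which is positive since $g$ is continuous and vanishes only at $w\notin B$. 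Writing each nonzero $z\in C_w$ as $z=tb$ with $t>0,\ b\in B$, one has $g(z)=t\,g(b)\ge tm$, so $G_a$ is contained in the truncated cone $P=\{tb:0\le t\le a/m,\ b\in B\}$ (with $0\cdot b:=w$). By the agreement of topologies in Theorem \ref{compactbase}, $P$ is the image of the compact set $[0,a/m]\times B$ under the jointly continuous scalar multiplication of $V_w$; hence $P$ is compact in $C$, contained in $C_w$, and therefore closed. Thus $z_0\in P\subseteq C_w$, giving $\s(z_0)=w$, whereas $\s(z_0)=\s(x_0)\vee w\neq w$ because $\s(x_0)\not\le w$, a contradiction. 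This establishes lower semicontinuity, so $f\in\A_w(C)$ and the restriction map is a bijection. Finally, preservation of the order and cone operations in both directions is immediate from the pointwise formulas, since any two members of $\A_w(C)$ agree (equal to $\infty$) off $\Omega$; hence the restriction map is an ordered cone isomorphism.
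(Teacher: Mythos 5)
Your proposal is correct and follows essentially the same route as the paper: compactness of the support via continuity and the $\{0,\infty\}$-dichotomy of $f$ on idempotents, the same explicit extension formula for the inverse of the restriction map, and the compact base of Theorem \ref{compactbase} as the key tool for continuity of that extension. The only cosmetic difference is that you obtain lower semicontinuity at the boundary of $\Omega$ by showing the sublevel sets of $g$ sit inside a compact truncated cone over the base (hence are closed in $C$), whereas the paper runs a direct net argument writing $x_i=t_i\tilde{x}_i$ and forcing $t_i\to\infty$; both hinge on $g$ being bounded below on the compact base.
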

\begin{proof}
Let $f\in \A(C)$. We have already seen that $\supp(f)$ is an idempotent. To prove its compactness, let
$(w_i)_{i\in I}$ be a downward directed family of idempotents with infimum $\supp(f)$. By the continuity of $f$, we have $\lim_i f(w_i)=f(\supp(f))=0$. But $f(w_i)\in \{0,\infty\}$ for all $i$. Therefore, there exists $i_0$ such that $f(w_i)=0$ for all $i\geq i_0$. But $\supp(f)$ is the largest element on which $f$ vanishes. Hence, $w_i=\supp(f)$ for all $i\geq i_0$. Thus, $\supp(f)$ is a compact idempotent.

Now, fix a compact idempotent $w$. Let $f\in \A_w(C)$. Clearly, $f$ is continuous and linear on $C_w$, and $f(w)=0$. Let $x\in C_w$. If $f(x)=0$, then $x\leq w$, which implies that  $x=w$. Thus, $f(x)>0$ for all $x\in C_w\backslash\{w\}$. Suppose that $f(x)=\infty$. Then $f(w)=\lim_n f(\frac 1n x)=\infty$, contradicting that $w=\supp(f)$. Thus, $f(x)<\infty$ for all $x\in C_w$. We have thus shown that $f|_{C_w}\in \A_{+}(C_w)$.

It is clear that the restriction map $\A_w(C)\ni f\mapsto f|_{C_w}\in \A_+(C_w)$ is additive and order preserving. Let us show that it is an order embedding. Let $f,g\in \A_w(C)$ be such that  $f|_{C_w}\leq g|_{C_w}$. Let $x\in C$. Suppose that  $x+w\in C_w$. Then 
\[
f(x)=f(x+w)\leq g(x+w)=g(x).
\]
If, on the other hand, $x+w\notin C_w$, then $\s(x+w)>w$. Hence, 
\[
f(x)=f(x+w)\geq f(\s(x+w))=\infty.
\]
We argue similarly that $g(x)=\infty$. Thus, $f(x)=g(x)$.

Let us finally prove surjectivity. Suppose first that $C_w=\{w\}$. Then $\A_{+}(C_w)$ consists of the zero function only. Clearly then, $\chi_w|_{C_w}=0$ and $\supp(\chi_w)=w$. It remains to show that $\chi_w$ is continuous. The set  $\chi^{-1}_w(\{\infty\})=\{x\in C:x\nleq w\}$ is  open. On the other hand,  $\chi_w^{-1}(\{0\})=\{x\in C:x\leq w\}$ agrees with $\{x\in C:\s(x)\leq w\}$ (since we have assumed that $C_w=\{w\}$). The set $\{x\in C:\s(x)\leq w\}$ is open by the compactness of $w$ (Lemma \ref{opensetsupport}). Thus, $\chi_w$ is continuous. 

Suppose now that $C_w\neq \{w\}$. Let $\tilde f\in \A_{+}(C_w)$. Define $f\colon C\to [0,\infty]$ by
\[
f(x)=\begin{cases}
\tilde{f}(x+w) &\hbox{ if }x+w\in C_{w},\\
\infty&\hbox{otherwise.}
\end{cases}
\] 
Observe that $f|_{C_w}=\tilde f$. Let us show that $f\in \A_w(C)$. To show that $\supp(f)=w$, note that 
\[
f(x)=0\Leftrightarrow \tilde f(x+w)=0\Leftrightarrow x+w=w\Leftrightarrow x\leq w.
\]
Thus, $w$ is the largest element on which $f$ vanishes, i.e., $w=\supp(f)$. We leave the not difficult verification that $f$ is linear to the reader. Let us show that $f$ is continuous. Let $(x_i)_i$
be a net in $C$ with $x_i\to x$. Suppose first that $x+w\in C_w$, i.e, $\s(x)\leq w$. Since the set
$\{y\in C:\s(y)\leq w\}$ is open (Lemma \ref{opensetsupport}), $\s(x_i)\leq w$ for large enough $i$. Therefore, 
\[
\lim_i f(x_i)=\lim_i \tilde f(x_i+w)=\tilde f(x+w)=f(x). 
\]
Now suppose that $x+w\notin C_w$, in which case $f(x)=\infty$. To show that $\lim_i f(x_i)=\infty$,
we may assume that $x_i\in C_w$ for all $i$ (otherwise $f(x_i)=\infty$ by definition). Observe also that $x_i\neq w$ for large enough $i$. Let us thus assume that $x_i\in C_w\backslash \{w\}$ for all $i$. Since $w$ is a compact idempotent, $C_{\omega}$ has a compact base $K\subseteq C_{w}\setminus\{w\}$ (Theorem \ref{compactbase}). Write $x_i=t_i\tilde{x}_i$ with $\tilde{x}_i\in K$ and $t_i>0$ for all $i$. Passing to a convergent subnet and relabelling, assume that $\tilde{x}_i\to y\in K$  and $t_i\to t\in [0,\infty]$. If $t<\infty$, then $x=\lim_it_i\tilde{x}_i=ty\in  C_{w}$, contradicting our assumption that $x+w\notin C_{w}$. Hence $t=\infty$. Let $\delta>0$ be the minimum value of  $\tilde{f}$ on the compact set $K$. Then 
\[
f(x_i)=\tilde{f}(x_i)=t_if(\tilde{x}_i)\ge t_i\delta.
\] 
Hence $f(x_i)\to\infty$, thus showing the continuity of $f$ at $x$.
\end{proof}

We will need the following theorem from \cite{edwards}:
\begin{theorem}[{\cite[Theorem 3.5]{edwards}}]\label{fromedwards}
Given $f,g\in \Lsc(C)$ there exists $f\wedge g$ and further,
\[
f\wedge \sup_i f_i=\sup_i(f\wedge f_i),
\]
for any upward directed set $(f_i)_{i\in I}$ in $\Lsc(C)$.
\end{theorem}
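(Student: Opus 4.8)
The plan is to produce the meet by an inf-convolution formula and then verify the distributivity law by a minimax argument. For $f,g\in\Lsc(C)$ I would define
\[
(f\wedge g)(x)=\inf\{f(y)+g(z):y,z\in C,\ y+z\ge x\}.
\]
The point is that the pointwise minimum $\min(f,g)$ is lower semicontinuous but fails to be additive, so the meet cannot be read off pointwise; the inf-convolution is the natural device that restores additivity. It is immediate that this function is homogeneous and sends $0$ to $0$, that it lies below $f$ and $g$ (take $(y,z)=(x,0)$ and $(0,x)$), and that any order-preserving linear $h\le f,g$ satisfies $h(x)=h(y+z)\le f(y)+g(z)$ whenever $y+z\ge x$, whence $h\le f\wedge g$ pointwise. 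Thus, once I show $f\wedge g\in\Lsc(C)$, it is automatically the greatest lower bound.

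To see that $f\wedge g$ is additive, subadditivity is clear by concatenating decompositions, while superadditivity is where the non-cancellative nature of $C$ enters: given $y+z\ge x+x'$ I would invoke the Riesz refinement property of $C$ (a consequence of its being a distributive lattice-ordered cone, cf.\ \cite{wehrung}) to split $y=y_1+y_2$, $z=z_1+z_2$ with $y_1+z_1\ge x$ and $y_2+z_2\ge x'$, so that $f(y)+g(z)\ge(f\wedge g)(x)+(f\wedge g)(x')$. For lower semicontinuity I would use compactness. The set $D_x=\{(y,z):y+z\ge x\}$ is closed in the compact space $C\times C$, because the algebraic order has closed graph (if $a_\alpha\le b_\alpha$ then $b_\alpha=a_\alpha+c_\alpha$, and a convergent subnet of $(c_\alpha)$ witnesses $a\le b$). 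So, given $x_\alpha\to x$ and near-optimal $(y_\alpha,z_\alpha)\in D_{x_\alpha}$, one passes to a convergent subnet $(y_\alpha,z_\alpha)\to(y,z)\in D_x$ and obtains
\[
(f\wedge g)(x)\le f(y)+g(z)\le\liminf_\alpha f(y_\alpha)+\liminf_\alpha g(z_\alpha)\le\liminf_\alpha\big(f(y_\alpha)+g(z_\alpha)\big),
\]
using lower semicontinuity of $f,g$ and superadditivity of $\liminf$; combined with $f(y_\alpha)+g(z_\alpha)\le(f\wedge g)(x_\alpha)+\epsilon$ this yields $\liminf_\alpha(f\wedge g)(x_\alpha)\ge(f\wedge g)(x)$.

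For the distributivity law, set $g=\sup_i f_i$ (a directed pointwise supremum, hence again in $\Lsc(C)$) and $h=\sup_i(f\wedge f_i)$. Monotonicity of the meet gives $h\le f\wedge g$ for free, so the content is the reverse inequality, which unwinds to the minimax estimate
\[
\inf_{(y,z)\in D_x}\Big(f(y)+\sup_i f_i(z)\Big)\le\sup_i\ \inf_{(y,z)\in D_x}\big(f(y)+f_i(z)\big)=:\alpha.
\]
I would prove it by a finite-intersection-property argument. Assuming $\alpha<\infty$ and fixing $\epsilon>0$, each sublevel set $K_i=\{(y,z)\in D_x:f(y)+f_i(z)\le\alpha+\epsilon\}$ is nonempty (since $\inf_{D_x}(f(y)+f_i(z))=(f\wedge f_i)(x)\le\alpha$) and compact (a closed subset of the compact set $D_x$). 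As $i$ increases the $f_i$ increase, so the $K_i$ decrease, and directedness of the index gives the finite intersection property; compactness then furnishes $(y^*,z^*)\in\bigcap_i K_i$. For this point $f(y^*)+\sup_i f_i(z^*)=\sup_i\big(f(y^*)+f_i(z^*)\big)\le\alpha+\epsilon$, and letting $\epsilon\to0$ gives the desired inequality. Note that this step uses only compactness and monotonicity, not refinement.

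The main obstacle is the additivity of the inf-convolution: the pointwise minimum of two elements of $\Lsc(C)$ is in general not additive, and recovering additivity is exactly where the absence of cancellation in $C$ must be confronted. I expect the crux of a careful write-up to be the refinement step used for superadditivity, namely splitting a pair $(y,z)$ with $y+z\ge x+x'$ into pairs adapted to $x$ and to $x'$; this is where the distributivity of addition over $\wedge$ and $\vee$ (equivalently, the refinement property of $C$) does the essential work. By contrast, both the lower semicontinuity of the meet and the distributivity law reduce to soft compactness arguments once the algebraic order is known to have closed graph.
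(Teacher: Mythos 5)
This result is imported by the paper from Edwards (\cite[Theorem 3.5]{edwards}); the paper supplies no proof of its own, so the only internal evidence of Edwards's method is the later citation of \cite[Lemma 3.4]{edwards}, which says the meet is attained by an exact decomposition $x_1+x_2=x$. Your inf-convolution $\inf\{f(y)+g(z):y+z\ge x\}$ is the same construction (the two formulas agree once one has Riesz decomposition, and the decomposition form $x\le y+z\Rightarrow x=y_1+z_1$ with $y_1\le y$, $z_1\le z$ \emph{is} available: take $y_1=x\wedge y$ and $z_1$ the minimal element with $y_1+z_1=x$, which exists by compactness and closedness under $\wedge$ of the set of complements; distributivity then gives $y_1+(z_1\wedge z)=x\wedge((x+z)\wedge(y+z))=x$, whence $z_1\le z$). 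Your verification of the greatest-lower-bound property, the lower semicontinuity via closedness of the order graph, and the finite-intersection-property argument for $f\wedge\sup_i f_i=\sup_i(f\wedge f_i)$ are all correct.

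The one genuine gap is exactly where you predicted it: superadditivity. The splitting you want --- $y=y_1+y_2$, $z=z_1+z_2$ with $y_1+z_1\ge x$ and $y_2+z_2\ge x'$ --- is \emph{not} a formal consequence of $C$ being a distributive lattice-ordered cone. The natural route (decompose $x=y_1+z_1$ with $y_1\le y$, $z_1\le z$ as above, then pick minimal complements $y_2,z_2$ with $y_1+y_2=y$, $z_1+z_2=z$) yields only $x+(y_2+z_2)=y+z\ge x+x'$, and in a non-cancellative cone one cannot cancel $x$ to conclude $y_2+z_2\ge x'$ (already in $\{0,\infty\}$ one has $\infty+0\ge\infty+\infty$ but $0\not\ge\infty$). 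What rescues the argument is not refinement but the support-idempotent cancellation of Lemma \ref{supportlemma}(ii): from $x+x'\le x+(y_2+z_2)$ one gets $x'\le y_2+z_2+\s(x)=\bigl(y_2+\s(y_1)\bigr)+\bigl(z_2+\s(z_1)\bigr)$, hence
\[
(f\wedge g)(x')\le f(y_2)+f(\s(y_1))+g(z_2)+g(\s(z_1)).
\]
One may assume $f(y)<\infty$ and $g(z)<\infty$ (otherwise the inequality to be proved is trivial), and then $f(\s(y_1))=\lim_n f(\tfrac1n y_1)=0$ and likewise $g(\s(z_1))=0$, so adding $(f\wedge g)(x)\le f(y_1)+g(z_1)$ gives $(f\wedge g)(x)+(f\wedge g)(x')\le f(y)+g(z)$ as required. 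You should either carry out this argument or independently establish the full refinement property for extended Choquet cones; citing it as an automatic consequence of distributivity is the one step of your write-up that does not stand as written.
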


Recall that throughout this section $C$ denotes an extended Choquet cone with an abundance of compact idempotents.

\begin{theorem}\label{ACsuprema}
Every function in $\Lsc(C)$ is the supremum of an upward directed family of functions in $\A(C)$, and
every function in $\Lsc_\sigma(C)$ is the supremum of an increasing sequence in $\A(C)$.
\end{theorem}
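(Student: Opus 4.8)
The plan is to realize $f$ as the supremum of the family of \emph{all} its continuous minorants, $\mathcal{F}_f:=\{g\in\A(C):g\le f\}$, by establishing two facts: (A) for every $x_0\in C$ one has $\sup\{g(x_0):g\in\mathcal F_f\}=f(x_0)$ (pointwise recovery), and (B) $\mathcal F_f$ is upward directed. Since directed suprema in $\Lsc(C)$ are computed pointwise, (A) and (B) together give $f=\sup\mathcal F_f$ with $\mathcal F_f$ directed, which is the first assertion. For the second assertion I would use the $\sigma$-compactness of $\{x:f(x)\ge 1\}$ to replace $\mathcal F_f$ by a countable cofinal subfamily and, invoking (B) once more, organize it into an increasing sequence.

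The engine for both (A) and (B) is a slice-wise reduction to the cancellative cones $C_w$. For a compact idempotent $w$, Theorem \ref{compactbase} provides a compact base $K_w$ of $C_w$, and the distributivity axiom of an extended Choquet cone makes $C_w$ a lattice cone, so that $K_w$ is a \emph{Choquet simplex}. Theorem \ref{ACwbijection} identifies $\A_w(C)$ with $\A_+(C_w)$, i.e.\ with the strictly positive continuous affine functions on $K_w$. The subtle point is that the global constraint $g\le f$ for $g\in\A_w(C)$ does \emph{not} translate into $g|_{C_w}\le f|_{C_w}$; rather, writing $g(x)=\tilde g(x+w)$ when $\s(x)\le w$ (and $g(x)=\infty$ otherwise), it translates into $\tilde g\le \bar f_w$, where the reduction $\bar f_w(y):=\inf\{f(x):\s(x)\le w,\ x+w=y\}$ on $C_w$ is concave (an infimal projection of the affine $f$) and whose lower semicontinuous hull is a concave lsc function on $K_w$.

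For (B), given $g_1,g_2\in\mathcal F_f$ with compact supports $w_i=\supp(g_i)$ (compactness of the support is part of Theorem \ref{ACwbijection}), set $w=w_1\wedge w_2$, again a compact idempotent since finite joins of compact elements in the algebraic lattice $(\Idem(C),\le^{\mathrm{op}})$ are compact. Because $w\le w_i$ we have $g_i(w)=0$, whence each $g_i$ restricts on $C_w$ to a continuous function dominated by $\bar f_w$; thus the continuous convex function $\max(g_1,g_2)|_{K_w}$ lies below the concave lsc hull of $\bar f_w$, and Edwards' separation theorem on the simplex $K_w$ yields a continuous affine $a$ with $\max(g_1,g_2)\le a\le\bar f_w$. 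Extending $a$ to $g_3\in\A_w(C)$ via Theorem \ref{ACwbijection} and checking the comparisons off the slice (again using $g_i(x+w)=g_i(x)$) gives $g_1,g_2\le g_3\le f$. For (A), the only difficulty is reaching the exact value $f(x_0)$ when $\supp(f)$ is not compact: using the abundance of compact idempotents one writes $\supp(f)=\inf_w w$ over a downward directed net of compact $w\gg\supp(f)$, approximates $\bar f_w$ from below by continuous affine functions on $K_w$ (the classical fact that an lsc affine function on a compact convex set is a directed supremum of continuous affine ones), and lets $w\downarrow\supp(f)$; since $x_0+w\to x_0+\supp(f)$ has $f$-value $f(x_0)$ and directed nets in $C$ converge to their infima, the lower semicontinuity of $f$ forces $\bar f_w(x_0+w)\to f(x_0)$. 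Points with $f(x_0)=\infty$ are handled the same way, the approximants taking value $\infty$ at $x_0$ once $\s(x_0)\nleq w$.

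The main obstacle is precisely the construction underlying (B): producing a single \emph{global} continuous minorant that dominates two prescribed ones while remaining below $f$. This is where the non-cancellative, idempotent-stratified, $\infty$-valued nature of $C$ blocks a direct appeal to the classical affine-approximation theory, and it is resolved only by passing to the reduced functions $\bar f_w$ on the simplex bases $K_w$ and invoking Edwards' separation there. Verifying the exact concavity and semicontinuity of $\bar f_w$, and that the off-slice comparisons close up, is the technical crux; Theorem \ref{fromedwards} (existence of $\wedge$ and its distributivity over directed suprema) is the natural tool for bookkeeping the meets of supports and minorants throughout.
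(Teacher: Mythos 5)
Your overall strategy---reduce to the cancellative slices $C_w$ over compact idempotents, approximate by continuous affine functions on a compact base, and approach a non-compact support from above by compact idempotents---is the same as the paper's. But there is a genuine gap at the step you yourself flag as the crux, and as written the directedness argument (B) does not go through. You assert that $\bar f_w(y)=\inf\{f(x):\s(x)\le w,\ x+w=y\}$ is \emph{concave} ``as an infimal projection of the affine $f$.'' That justification is backwards: an infimal projection of an additive function over the additive constraint $x\mapsto x+w$ is a priori only \emph{subadditive}, i.e.\ sublinear/convex (add witnesses for $y_1$ and $y_2$ and use $w+w=w$). Edwards' separation theorem needs the upper function to be concave and lsc; if $\bar f_w$ were merely convex the separation step would simply be false: on $[0,1]$ the affine functions $1-2t$ and $2t-1$ both lie below the convex continuous function $|2t-1|$, yet they have no common affine majorant below it. So concavity---in fact linearity---of $\bar f_w$, together with its lower semicontinuity, must be \emph{proved}; superadditivity requires a Riesz-decomposition argument in $C$ (given $x$ with $x+w=y_1+y_2$, split $x=a_1+a_2$ with $a_i+w=y_i$), which your sketch does not supply. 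This is precisely where the paper's machinery enters: $\bar f_w$ coincides with $(f\wedge\chi_w)|_{C_w}$, and Theorem \ref{fromedwards} together with \cite[Lemma 3.4]{edwards} guarantees that $f\wedge\chi_w$ is again a linear lsc function, with support exactly $w$.

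Once linearity and lower semicontinuity of $\bar f_w$ are in hand, your appeal to Edwards' separation on a simplex becomes unnecessary: the continuous affine minorants of an lsc \emph{affine} function on a compact convex set already form an upward directed family with the right supremum (\cite[Corollary I.1.4]{alfsen}), which is the route the paper takes, and which also spares you from verifying that $K_w$ is a Choquet simplex. Two smaller points to attend to: (1) the approximating affine function must be \emph{strictly positive} on $K_w$ in order to lie in $\A_+(C_w)$ and extend via Theorem \ref{ACwbijection}; the paper obtains this from a compactness argument showing the approximants are eventually bounded away from $0$ on the base. (2) In (A), a point $x_0$ with $f(x_0)=\infty$ but $\s(x_0)\le\supp(f)$ satisfies $\s(x_0)\le w$ for \emph{every} approximating compact idempotent $w$, so your remark that the approximants take the value $\infty$ there does not cover such points; one must instead use the pointwise recovery $\bar f_w(x_0+w)\to\infty$.
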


\begin{proof}
Let $f\in \Lsc(C)$ and set $w=\supp(f)$. We first consider the case that $w$ is compact and then deal with the general case.

Assume that $w$ is compact. If $C_w=\{w\}$, then $f=\chi_w$. Further, $\chi_w$ is continuous, as shown in the proof of Theorem \ref{ACwbijection}. Suppose that $C_w\neq \{w\}$. Consider the restriction of $f$ to $C_w$. By \cite[Corollary I.1.4]{alfsen}, $f|_{C_w}$ is the supremum of an increasing net $(\tilde h_i)_{i}$ of linear continuous functions  $\tilde h_i\colon C_w\to \R$. Since $f|_{C_w}$ is strictly positive on $C_{w}\backslash\{w\}$, it is separated from 0 on any compact base of $C_w$. It follows that the functions $\tilde h_i$ are eventually strictly positive on $C_{w}\backslash\{w\}$. Indeed, the sets $U_{i,\delta}=\tilde h_i^{-1}((\delta,\infty])\cap C_w$, where $i\in I$ and $\delta>0$, form an upward directed open cover of $C_w\backslash \{w\}$. Thus, for some $\delta>0$ and $i_0\in I$, $\tilde h_{i}$ is greater than $\delta$ on a (fixed) compact base of $C_w$ for all $i\geq i_0$. Let us thus assume that $\tilde h_i\in \A_{+}(C_w)$ for all $i$. By Theorem \ref{ACwbijection}, each $\tilde h_i$ has a unique continuous extension to an $h_i\in \A_w(C)$. Further, $(h_i)_i$ is also an increasing net. We claim that $f=\sup_i h_i$. Let us first show that $h_i\leq f$ for all $i$. Let $x\in C$ be such that  $f(x)<\infty$. Then 
\[
0=\lim_{n}\frac 1nf(x)=f(\s(x))=0.
\] 
Hence, $\s(x)\leq w$, i.e.,  $x+w\in C_w$. We thus have that
\[
h_i(x)=\tilde h_i(x+w)\leq f(x+w)=f(x).
\]
Hence, $h_i\leq f$ for all $i$. Set $h=\sup_i h_i$. Clearly $h\leq f$. If $\s(x)\leq w$ then 
\[
h(x)=h(x+w)=\sup_i h_i(x+w)=f(x).
\] 
If, on the other hand, $\s(x)\nleq w$, then $h_i(x)=\infty$ for all $i$ and $h(x)=\infty=f(x)$. Thus, $h=f$. 

Let us now consider the case when $w$ is not compact. Define
\[
H=\{h\in \A(C):h\leq (1-\epsilon)f\hbox{ for some }\epsilon>0\}.
\]
Let us show that $H$ is upward directed and has pointwise supremum $f$. Let $h_1,h_2\in H$. Set $v_1=\supp(h_1)$ and  $v_2=\supp(h_2)$, which are compact idempotents, by Theorem \ref{ACwbijection}, and satisfy that $w\leq v_1,v_2$. Set $v=v_1\wedge v_2$, which is also compact and such that $w\leq v$. Set $g=f\wedge \chi_v$, which exists by Theorem \ref{fromedwards}. Since scalar multiplication by a non-negative scalar is an order isomorphism  on $C$, we have $tg=(tf)\wedge \chi_v$. Letting $t\to \infty$ and using Theorem \ref{fromedwards}, we get $\infty\cdot g=\chi_w\wedge \chi_v=\chi_v$. Thus, $\supp(g)=v$ (Lemma \ref{inftyfsupp}). Let $\epsilon>0$ be such that $h_1,h_2\leq (1-\epsilon)f$. Then, $h_1,h_2\leq (1-\epsilon)g$.  Since we have already established the case of compact support idempotent, there exists an increasing net $(g_i)_i$ in $\A_v(C)$ such that $g=\sup_i g_i$. By \cite[Proposition 5.1]{ERS}, $h_1,h_2\ll (1-\epsilon/2)g$ in the directed complete ordered set $\Lsc(C)$ (see also the definition of the relation $\lhd$ in the next section). Thus, there exists $i_0$ such that  $h_1,h_2\leq (1-\epsilon/2)g_{i_0}$. Now $h=(1-\epsilon/2)g_{i_0}$ belongs to $H$ and satisfies that $h_1,h_2\leq h$. This shows that $H$ is upward directed.

Let us show that $f$ is the pointwise supremum of the functions in $H$. It suffices to show that $f$ is the supremum of functions in $\A(C)$, as we can then easily arrange for the $1-\epsilon$ separation. Choose a decreasing net
of compact idempotents $(v_i)_i$ with $w=\inf v_i$ (recall that $C$ has an abundance of compact idempotents). For each fixed $i$, $f\wedge \chi_{v_i}$ has support idempotent $v_i$, which is compact. Thus, as demonstrated above, $f\wedge \chi_{v_i}$ is the supremum of an increasing  net in $\A(C)$.  But $f=\sup_i f\wedge \chi_{v_i}$ (Theorem \ref{fromedwards}). It follows that $f$ is the pointwise supremum of functions in $\A(C)$.

Finally, suppose that $f\in \Lsc_\sigma(C)$, and let us show that there is a countable set in $H$ with pointwise supremum $f$.  For each $h\in H$, let $U_{h}=h^{-1}((1,\infty])$. The sets $(U_{h})_{h\in H}$  form an open cover of  $f^{-1}((1,\infty])$. Since the latter is $\sigma$-compact, we can choose a countable set $H'\subseteq H$ such that  $(U_{h})_{h\in H'}$ is also a cover of $f^{-1}((1,\infty])$. Observe that for each $x\in C$, $f(x)>1$ if and only if $h(x)>1$ for some $h\in H'$. It follows, by the homogeneity with respect to scalar multiplication of these functions, that $\sup_{h\in H'}h(x)=f(x)$ for all $x\in C$. Now using that $H$ is upward directed we can construct an increasing sequence with supremum $f$.
\end{proof}

\begin{theorem}\label{metrizableC}
Let $C$ be a metrizable extended Choquet cone with an abundance of compact idempotents. Then 
there exists a countable subset of $\A(C)$ such that every function in $\Lsc(C)$ is the supremum
of an increasing sequence of functions in this set.
\end{theorem}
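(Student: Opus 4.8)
The plan is to obtain Theorem \ref{metrizableC} as the ``sequential, separable shadow'' of Theorem \ref{ACsuprema}, the extra leverage being that a metrizable compact space is second countable and hence hereditarily Lindel\"of. Two gains are needed over Theorem \ref{ACsuprema}: first, replacing directed families by sequences for \emph{all} of $\Lsc(C)$ (not just $\Lsc_\sigma(C)$); second, replacing the a priori $f$-dependent approximating families by one fixed countable set $\mathcal D\subseteq\A(C)$. For the first gain, given $f\in\Lsc(C)$ take the upward directed $\mathcal F\subseteq\A(C)$ with $f=\sup\mathcal F$ from Theorem \ref{ACsuprema}; for each rational $q>0$ the open set $f^{-1}((q,\infty])$ equals $\bigcup_{g\in\mathcal F}g^{-1}((q,\infty])$, so second countability gives a countable subcover, and letting $q$ range over $\Q_{>0}$ together with directedness of $\mathcal F$ produces an increasing sequence in $\A(C)$ with supremum $f$. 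It therefore remains to arrange that the sequence be drawn from a single countable set.

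Since $\supp$ is order reversing on $\Lsc(C)$ and the support of any member of $\A(C)$ is a \emph{compact} idempotent (Theorem \ref{ACwbijection}), the supports occurring in $\mathcal D$ must form an order-generating family of compact idempotents; this is the crux, and I would isolate it as an auxiliary lemma: \emph{if $C$ is metrizable there is a countable set $P$ of compact idempotents such that every idempotent is the infimum of a decreasing sequence from $P$.} To prove it, observe that $\Idem(C)$ is closed in $C$, hence compact metrizable; fix a compatible metric $\rho$. Let $W$ be the set of compact idempotents, which is closed under $\wedge$ (a finite meet of compact idempotents is a join of compact elements in the opposite order, hence compact). For $v\in W$ the set $\{x\in C:\s(x)\le v\}$ is open by Lemma \ref{opensetsupport}, so $D_v:=\{w'\in\Idem(C):w'\le v\}$ is relatively open. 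For each fixed $k$, algebraicity under the opposite order writes any $w$ as the limit of the downward directed net $\{v\in W:v\ge w\}$, so $w\in D_v\cap B_\rho(v,1/k)$ for some $v\in W$; by Lindel\"of I extract a countable subcover indexed by $P_k\subseteq W$. Taking $P$ to be the $\wedge$-closure of $\bigcup_kP_k$ (still countable, still in $W$), then for any idempotent $w$ I choose $v_k\in P_k$ with $w\le v_k$ and $\rho(v_k,w)<1/k$ and set $u_k=v_1\wedge\cdots\wedge v_k\in P$: the $u_k$ decrease, satisfy $u_k\ge w$, and since $u_k\le v_k\to w$ with the order closed, $\inf_k u_k=w$.

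With $P$ fixed I would build $\mathcal D$ levelwise. For $v\in P$ with $C_v=\{v\}$ include $\chi_v$ (continuous by Theorem \ref{ACwbijection}); otherwise $C_v$ has a compact metrizable base by Theorem \ref{compactbase}, the associated space of continuous affine functions is separable, and through the isomorphism $\A_v(C)\cong\A_{+}(C_v)$ I pick a countable $\mathcal D_v\subseteq\A_v(C)$ that is dense from below for the way-below relation of $\Lsc(C)$. Put $\mathcal D=\{\chi_v:v\in P\}\cup\bigcup_{v\in P}\mathcal D_v$, a countable subset of $\A(C)$. To verify it works, take $f\in\Lsc(C)$; by the first paragraph together with interpolation for the way-below relation $\lhd$ (\cite[Proposition 5.1]{ERS}) I may write $f=\sup_n g_n$ with $g_n\lhd g_{n+1}\lhd f$ in $\A(C)$. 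Writing $w_n=\supp g_n$ (compact, with $w_{n+1}\le w_n$), the lemma provides a decreasing $P$-sequence tending to $w_{n+1}$, and compactness of $w_n$ in the opposite order places some $p\in P$ with $w_{n+1}\le p\le w_n$; truncating by $\chi_p$ and invoking density of $\mathcal D_p$ I select $d_n\in\mathcal D$ with $g_n\le d_n\lhd g_{n+1}$. Then $d_n\le g_{n+1}\le d_{n+1}$, so $(d_n)$ is increasing with supremum $f$.

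The main obstacle is exactly the lemma: extracting a \emph{single} countable order-generating family of compact idempotents, since the supports of $\A(C)$-functions range over the (possibly uncountable) lattice $\Idem(C)$, and only second countability of $C$—used via the Lindel\"of covering above—cuts this down while still reaching every idempotent from above. A secondary point requiring care is the final assembly into one increasing sequence: here the interpolation property of $\lhd$ (rather than any pointwise maximum, which would leave $\A(C)$) is what lets the levelwise approximants be chained, mirroring the role of the $(1-\epsilon)$-separations in the proof of Theorem \ref{ACsuprema}. All other ingredients are the routine separable refinements of results already established in this section.
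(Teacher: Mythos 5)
Your proposal is correct, and its second and third paragraphs reproduce the paper's core construction: countable sup-norm-dense subsets of each $\A_v(C)\cong\A_+(C_v)$ obtained from the compact metrizable base of $C_v$ (Theorem \ref{compactbase}) and the separability of $C(K)$, chained into a single increasing sequence via the $\lhd$-interpolation trick $h_n'=(1-\tfrac1n)h_n$. The genuine divergence is in how the countable index set of idempotents is produced. The paper simply shows that the set of \emph{all} compact idempotents of a metrizable $C$ is countable, by assigning to each compact idempotent $w$ a basic open set $U_i$ with $w\in U_i\subseteq\{x:\s(x)\le w\}$ (open by Lemma \ref{opensetsupport}) and observing that $U_i$ determines $w$; it then takes $B=\bigcup_w B_w$ over all compact $w$, and the final assembly needs no interpolation of idempotents because $\supp(h_{n+1}')$ is already an admissible index. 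You instead extract, by a Lindel\"of covering of $\Idem(C)$ with the sets $D_v\cap B_\rho(v,1/k)$, a countable $\wedge$-closed family $P$ of compact idempotents that reaches every idempotent from above, and then pay for this with an extra step in the assembly (interpolating $w_{n+1}\le p\le w_n$ with $p\in P$ using compactness of $w_n$, and truncating by $\chi_p$). Both routes work; yours is longer but self-contained, and it avoids having to identify the compact idempotents inside a countable basis — it also quietly handles the fact that your approximants' supports might a priori lie outside $P$, which the paper's choice $B_{w}$ for \emph{every} compact $w$ renders moot. One small economy you could have taken from the paper: rather than running the Lindel\"of argument of your first paragraph, note that in a compact metric space every open set is $\sigma$-compact, so $\Lsc(C)=\Lsc_\sigma(C)$ and Theorem \ref{ACsuprema} already yields increasing sequences in $\A(C)$.
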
	 

\begin{proof}
Let us first argue that the set of compact idempotents is countable. Let $(U_{i})_{i=1}^\infty$ be a countable basis for the topology of $C$. Let $w\in \Idem_c(C)$ be a compact
idempotent. Since $\{x\in C:w\leq x\}$ is an open set, by Lemma \ref{opensetsupport}, there exists $U_i$ such that $w\in U_i\subseteq \{x\in C:w\leq x\}$. Clearly then $w=\inf U_i$. Thus, the set of compact idempotents embeds in the countable set $\{\inf U_i:i=1,2,\ldots\}$.

Now fix a compact idempotent $w$. Recall that $\A_w(C)$ is isomorphic to the cone $\A_+(C_w)$ of positive linear functions on the cone $C_w$. Suppose that $C_w\neq \{w\}$. Let $K$ denote a compact base of $C_w$, which exists by Theorem \ref{compactbase}, and is metrizable since $C$ is metrizable by assumption. Then $\A_+(C_w)$ is separable in the metric induced by the uniform norm on $K$, since it embeds in $C(K)$, which is separable. Let $\tilde B_w\subseteq \A_+(C_w)$ be a countable dense subset.  It is not hard now to express any function in $\A_+(C_w)$ as  the supremum of an increasing sequence in $\tilde B_w$.  Indeed, it suffices to show that for any $\epsilon>0$ and $f\in \A_{+}(C_w)$, there exists $g\in \tilde B_w$ such that $(1-\epsilon)f\leq g\leq f$. Keeping in mind that $f$ is separated from 0 on $K$, we can choose $g\in\tilde B_w$ such that 
\[
\Big\|(1-\frac\epsilon{2})f|_K-g|_K\Big \|_{\infty}<\frac\epsilon{2} \min_{x\in K}|f(x)|.
\] 
Then $g$ is as desired. Let $B_w\subseteq \A_w(C)$ be the set mapping bijectively onto $\tilde B_w\subseteq \A_+(C_w)$ via the restriction map. By Theorem \ref{ACwbijection}, every function in $\A_w(C)$ is the supremum of an increasing sequence in $B_w$. If, on the other hand, $C_w=\{w\}$,  then 
$\A_w(C)=\{\chi_w\}$. In this case we set $B_w=\{\chi_w\}$.

Let $B=\bigcup_w B_w$, where $w$ ranges through the set of compact idempotents, and $B_w$ is as in the previous paragraph. Observe that $B$ is countable. Let us show that every function in $f\in \Lsc(C)$ is the supremum of an  increasing sequence in $B$. Observe that $\Lsc(C)=\Lsc_\sigma(C)$, since all open subsets of a compact metric space  are $\sigma$-compact. Thus, $f=\sup_n h_n$, where $(h_n)_{n=1}^\infty$ is an increasing sequence in $\A(C)$.  The sequence $h_n'=(1-\frac 1n)h_n$ is also increasing, with supremum $f$, and $h_{n}'\ll h_{n+1}'$
in the directed complete ordered set $\Lsc(C)$ (see \cite[Proposition 5.1]{ERS} and also the definition of the relation $\lhd$ in the next section). Say $h_{n+1}'\in \A_{w_n}(C)$ for some compact idempotent $w_n$. Since $h_{n+1}'$
is the supremum of a sequence in $B_{w_n}$, we can choose $g_n\in B_{w_n}$ such that $h_n'\leq g_n\leq h_{n+1}'$. Then $(g_n)_{n=1}^\infty$ is an increasing sequence in $B$  with supremum $f$.
\end{proof}	

\section{Duality with Cu-cones}\label{duality}
By a Cu-cone we understand a Cu-semigroup $S$ that is also a cone, i.e., it is endowed with a scalar multiplication by $(0,\infty)$ compatible with the monoid structure of $S$; see Section \ref{sectionECCs}. Further we ask that
\begin{enumerate}
\item
$t_1\leq t_2$ and $s_1\leq s_2$ imply $t_1s_1\leq t_2s_2$ for all $t_1,t_2\in (0,\infty)$ and $s_1,s_2\in S$,
\item
$\sup_n t_ns_n=(\sup_n t_n)(\sup_n s_n)$ where $(t_n)_{n=1}^\infty$ and $(s_n)_{n=1}^\infty$ are increasing sequences in $(0,\infty)$ and $S$, respectively.
\end{enumerate}	
Cu-cones are called Cu-semigroups with real multiplication in \cite{robertFS}. They are also Cu-semimodules over the Cu-semiring $[0,\infty]$, in the sense of \cite{tensorthiel}.

In this section we prove a duality between extended Choquet cones with an abundance of compact idempotents and certain Cu-cones.   Throughout this section, $S$ denotes a Cu-cone satisfying the axioms O5 and O6, so that $F(S)$ is an extended Choquet cone. 

Let us recall the relation $\lhd$ in $\Lsc(C)$ defined in \cite{ERS}: Given $f,g\in \Lsc(C)$, we write $f\lhd g$ if $f\le (1-\varepsilon)g$ for some $\varepsilon>0$ and $f$ is continuous at each $x\in C$ such that $g(x)<\infty$. 
By \cite[Proposition 5.1]{ERS}, $f\lhd g$ implies that $f$ is way below $g$ in the dcpo $\Lsc(C)$, meaning that for any upward directed net $(g_i)_i$ such that $g\leq \sup g_i$, there exists $i_0$ such that $f\leq g_{i_0}$.

\begin{lemma}({Cf. \cite[Lemma 3.3.2]{robertFS}})\label{lhdll}
Let $f,g\in \Lsc(C)$ be such that $f\lhd g$. Then here exists $h\in \Lsc(C)$ such that $f+h=g$ and $h\geq \epsilon g$
for some $\epsilon>0$. Moreover, if $f,g\in \Lsc_\sigma(C)$, then $h$ may be chosen in $\Lsc_\sigma(C)$, and if $f,g\in \A(C)$, then $h$ may be chosen  in $\A(C)$.
\end{lemma}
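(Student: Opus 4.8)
The plan is to define $h$ by the only formula compatible with the two required conditions and then check that it lands in the correct function space. Fix $\varepsilon>0$ with $f\le(1-\varepsilon)g$, as provided by $f\lhd g$, and set
\[
h(x)=\begin{cases} g(x)-f(x)&\hbox{if }g(x)<\infty,\\ \infty&\hbox{if }g(x)=\infty.\end{cases}
\]
When $g(x)<\infty$ we have $f(x)\le(1-\varepsilon)g(x)<\infty$, so the difference is well defined and lies in $[\varepsilon g(x),g(x)]$; when $g(x)=\infty$, both $f+h=g$ and $h\ge\varepsilon g$ force $h(x)=\infty$. Thus $f+h=g$ and $\varepsilon g\le h\le g$ hold pointwise by inspection, and linearity of $h$ is a routine check: the finite cases rearrange freely, and if $g(x)=\infty$ then $g(x+y)=\infty$ by monotonicity of $g$, so both sides of additivity equal $\infty$.

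The first substantive step is lower semicontinuity, which I would verify via nets $x_i\to x$. If $g(x)=\infty$, then lower semicontinuity of $g$ gives $g(x_i)\to\infty$, whence $h(x_i)\ge\varepsilon g(x_i)\to\infty=h(x)$. If $g(x)<\infty$, then $f$ is continuous at $x$ (this is precisely the content of $f\lhd g$ at points where $g$ is finite) and $f(x)\le(1-\varepsilon)g(x)<\infty$; using $f+h=g$ together with the identity $\liminf_i(a_i+b_i)=\liminf_i a_i+b$, valid for nets in $[0,\infty]$ when $b_i\to b<\infty$, I obtain
\[
\liminf_i h(x_i)+f(x)=\liminf_i g(x_i)\ge g(x)=h(x)+f(x),
\]
and cancelling the finite quantity $f(x)$ yields $\liminf_i h(x_i)\ge h(x)$. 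Hence $h\in\Lsc(C)$. For the continuous case, assuming $f,g\in\A(C)$, it remains to check upper semicontinuity: at a point with $g(x)=\infty$ this is automatic since $h(x)=\infty$, and combined with lower semicontinuity it gives continuity there; at a point with $g(x)<\infty$, continuity of $g$ produces a neighbourhood on which $g$ is finite, so $h=g-f$ is a difference of finite continuous functions and is continuous at $x$. Thus $h\in\A(C)$.

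I expect the main obstacle to be the $\sigma$-compact case, where non-metrizability forbids the slogan ``open subsets of $\sigma$-compact sets are $\sigma$-compact'', and where the set $\{f=\infty\}$ need not be $\sigma$-compact in isolation. The resolution is to split $\{h>1\}$ at the threshold $1/\varepsilon$. From $h\ge\varepsilon g$ we get $\{g>1/\varepsilon\}\subseteq\{h>1\}$, and this set is $\sigma$-compact since $g\in\Lsc_\sigma(C)$; crucially, it already contains $\{g=\infty\}\supseteq\{f=\infty\}$, absorbing the problematic locus. On the complement $\{g\le1/\varepsilon\}$ every value is finite and $h=g-f$, so
\[
\{h>1\}\cap\{g\le1/\varepsilon\}=\bigcup_{q\in\Q,\,q>1}\big(\{g>q\}\cap\{f<q-1\}\cap\{g\le1/\varepsilon\}\big).
\]
Here $\{g>q\}$ is $\sigma$-compact, $\{f<q-1\}=\bigcup_n\{f\le q-1-\tfrac1n\}$ is $\sigma$-compact (each sublevel set is closed in the compact space $C$, hence compact), and $\{g\le1/\varepsilon\}$ is compact, so each term is $\sigma$-compact and the countable union is as well. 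Therefore $\{h>1\}$ is $\sigma$-compact; rescaling by homogeneity makes $h^{-1}((a,\infty])$ $\sigma$-compact for every $a\in[0,\infty)$, giving $h\in\Lsc_\sigma(C)$ and completing the proof.
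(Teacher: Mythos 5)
Your proposal is correct and follows essentially the same route as the paper: the same definition of $h$ (difference where $g$ is finite, $\infty$ elsewhere), the same use of $f\le(1-\varepsilon)g$ to get $h\ge\varepsilon g$, the same case split on whether $g(x)$ is finite for lower semicontinuity and continuity, and the same $\sigma$-compactness decomposition of $\{h>1\}$ via the threshold $1/\varepsilon$ and rational cuts (the paper writes the second piece as $\bigcup_{r\in\Q}g^{-1}((1+r,\infty])\cap f^{-1}([0,r])$, which is only cosmetically different from your version). No gaps.
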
	
\begin{proof}
Define $h\colon C\to [0,\infty]$ by	
\[
h(x) = \begin{cases}
g(x) - f(x) & \hbox{ if }g(x)<\infty,\\
\infty&\hbox{ otherwise}.
\end{cases}
\]
Then $f+h=g$. The linearity of $h$ follows from a straightforward analysis. Since $f\lhd g$, there exists $\epsilon>0$ such that $f\leq (1-\epsilon)g$. Then $g(y)-f(y)\geq \epsilon g(y)$ whenever $g(y)<\infty$, while if $g(y)=\infty$ then $g(y)=\infty=h(y)$. This establishes that $h\geq \epsilon g$.

The proof of \cite[Lemma 3.3.2]{robertFS} establishes the lower semicontinuity of $h$. Let us recall it here: Let $(x_i)_i$ be a net in $C$ such that $x_i\to x$.  Suppose first that $g(x)<\infty$. Then $f(x)<\infty$, and by the continuity of $f$ at $x$, $f(x_i)<\infty$ for large enough $i$. Then, 
\[
\liminf_i h(x_i)\geq \liminf_i g(x_i)-f(x_i)\geq g(x)-f(x)=h(x).
\]
Suppose now that $g(x)=\infty$, so that $h(x)=\infty$. Since $h\geq \epsilon g$,
\[
\liminf_i h(x_i)\geq \epsilon \liminf_i g(x_i)\geq \epsilon g(x)=\infty,
\] 
thus showing lower semicontinuity at $x$. 
	
Assume now that $f,g\in \Lsc_\sigma(C)$. It is not difficult to show that $h(x)>1$ if and only if $g(x)>1/\epsilon$ or $g(x)>1+r$ and $f(x)\leq r$ for some $r\in \Q$. Thus,
\[
h^{-1}((1,\infty])=g^{-1}((1/\epsilon,\infty])\cup \bigcup_{r\in \Q}g^{-1}((1+r,\infty])\cap f^{-1}([0,r]).
\]
The right side is $\sigma$-compact. Hence, $h\in \Lsc_\sigma(C)$.

Assume now that $f,g\in \A(C)$. Continuity at $x\in C$ such that $h(x)=\infty$ follows automatically from lower semicontinuity. Let $x\in C$ be such that $h(x)<\infty$, i.e., $g(x)<\infty$.  If $x_i\to x$ then $g(x_i)<\infty$
and $f(x_i)<\infty$ for large enough $i$. Then 
\[
h(x_i)=g(x_i)-f(x_i)\to g(x)-f(x)=h(x),\] 
where we used the continuity of $g$ and $f$. Thus, $h$ is continuous at $x$.
\end{proof}	


By an ideal of a Cu-cone we understand a  subcone that is closed under the suprema of increasing sequence.  There is an order reversing bijection between the ideals of $S$ and the idempotents of $F(S)$: 
\[
I\mapsto \lambda_I(x):=\begin{cases}
0&\hbox{if }x\in I\\
\infty&\hbox{otherwise,}
\end{cases}
\] 
where $I$ ranges through the ideals of $S$.

Let us say that a Cu-cone $S$ has an abundance of compact ideals if the lattice of ideals of $S$ is algebraic, i.e., every ideal of $S$ is a supremum of compact ideals.

\begin{theorem}\label{dualitythm}
Let $S$ be a Cu-cone satisfying O5 and O6 and having an abundance of compact ideals. Then $F(S)$ is an extended Choquet cone with an abundance of compact idempotents. Moreover, $S\cong \Lsc_\sigma(F(S))$ via the assignment 
\[
S\ni s\mapsto \hat s\in \Lsc_\sigma(F(S)),
\] 
where $\hat{s}(\lambda):=\lambda(s)$ for all $\lambda\in F(S)$. 

Let $C$ be an extended Choquet cone with an abundance of compact idempotents. Then $\Lsc_\sigma(C)$ is a Cu-cone satisfying O5 and O6 and having an abundance of compact ideals. Moreover, $C\cong F(\Lsc_\sigma(C))$ via the assignment 
\[
C\ni x\mapsto \hat x\in F(\Lsc_\sigma(C)),
\]
where $\hat x(f):=f(x)$ for all $f\in \Lsc_\sigma(C)$.
\end{theorem}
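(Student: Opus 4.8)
The plan is to prove the duality $\Cu$-cone $\leftrightarrow$ extended Choquet cone as a pair of quasi-inverse functorial assignments, splitting the work into the two directions asserted in Theorem~\ref{dualitythm}. Throughout, the backbone is the approximation Theorem~\ref{ACsuprema}, which lets me reduce every structural claim about $\Lsc_\sigma(C)$ to statements about $\A(C)$, where Theorem~\ref{ACwbijection} gives a concrete description via positive linear functions on the cancellative subcones $C_w$.

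First I would treat the assignment $C\mapsto \Lsc_\sigma(C)$. The verification that $\Lsc_\sigma(C)$ is a $\Cu$-cone reduces to checking O1--O6 together with the two scalar-multiplication axioms. O1 (existence of suprema of increasing sequences) and O3 (additivity of suprema) are immediate from the pointwise structure and the fact that pointwise suprema of functions in $\Lsc(C)$ stay in $\Lsc(C)$, noting that a countable increasing union of $\sigma$-compact open sets is again $\sigma$-compact open. For O2 I would invoke Theorem~\ref{ACsuprema}: every $f\in\Lsc_\sigma(C)$ is the supremum of an \emph{increasing sequence} in $\A(C)$, and the relation $\lhd$ of \cite{ERS} (which implies $\ll$ by \cite[Proposition 5.1]{ERS}) lets me arrange the rapidly-increasing $\ll$-sequence by inserting factors $(1-\tfrac1n)$. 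Axioms O4, O5, O6 and the scalar-multiplication compatibilities I would deduce from Lemma~\ref{lhdll}: given $f\lhd g$ one writes $g=f+h$ with $h\geq \epsilon g$, and this additive complement, chosen in $\Lsc_\sigma(C)$, is exactly what O5 demands; O6 (Riesz-type interpolation) I would obtain by working on a fixed compact base via Theorem~\ref{ACwbijection}, where the functions are genuine positive linear functionals and Riesz decomposition is available. The abundance of compact ideals then follows because, under the correspondence between ideals of $\Lsc_\sigma(C)$ and idempotents of $C$, compact ideals correspond to compact idempotents; the hypothesis that $C$ has an abundance of compact idempotents transfers directly, using that $\chi_w$ generates the ideal cut out by $w$.

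Next I would establish $C\cong F(\Lsc_\sigma(C))$ via $x\mapsto \hat x$, where $\hat x(f)=f(x)$. That each $\hat x$ is a functional (additive, order preserving, preserving suprema of increasing sequences) is a pointwise computation, and continuity/linearity in $x$ are routine. The substance is bijectivity. \textbf{Injectivity} follows because $\Lsc_\sigma(C)$ separates points of $C$: if $x\neq y$ then by local convexity and Theorem~\ref{ACsuprema} there is a function in $\A(C)\subseteq\Lsc_\sigma(C)$ distinguishing them. \textbf{Surjectivity} is the step I expect to be the main obstacle. Given a functional $\lambda\in F(\Lsc_\sigma(C))$, I must produce $x\in C$ with $\lambda=\hat x$; the natural candidate is to recover $x$ from the values of $\lambda$ on $\A(C)$, using that on each cancellative subcone $C_w$ (with compact base $K$ by Theorem~\ref{compactbase}) the restriction of $\lambda$ to $\A_w(C)\cong\A_+(C_w)$ is a functional on a complete order unit space, whence evaluation at a point of $K$ by the classical compact-convex duality \cite{alfsen}. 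The difficulty is gluing these local representatives across the lattice of compact idempotents coherently and checking the reconstructed $x$ is continuous/compatible, i.e. that the support idempotent $\s(x)$ and the behaviour of $\lambda$ on the $\chi_w$ match up; here the abundance of compact idempotents and Lemma~\ref{opensetsupport} are essential to control supports in a limiting argument.

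Finally, for the opposite starting point $S$, I would show $F(S)$ has an abundance of compact idempotents by transporting the abundance of compact ideals of $S$ through the order-reversing ideal/idempotent bijection $I\mapsto\lambda_I$, and then prove $S\cong\Lsc_\sigma(F(S))$ via $s\mapsto\hat s$. The map is a $\Cu$-cone morphism by definition of the topology on $F(S)$ and the axioms O5, O6 (which make $F(S)$ an extended Choquet cone by \cite{ERS,robertFS}); that it is an order embedding uses that functionals separate elements of $S$ satisfying O5--O6, and surjectivity onto $\Lsc_\sigma(F(S))$ again leans on Theorem~\ref{ACsuprema} to match every $\sigma$-compactly-supported lower semicontinuous function with an element of $S$. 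I would present the two reconstruction arguments in parallel, since each is the quasi-inverse bookkeeping for the other, and I expect the surjectivity/reconstruction steps to carry essentially all the weight of the proof.
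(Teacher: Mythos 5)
Your plan correctly identifies the architecture and most of the right tools (Theorem \ref{ACsuprema}, Lemma \ref{lhdll}, Theorem \ref{ACwbijection}, the ideal/idempotent correspondence), but the two surjectivity claims --- which carry essentially all the weight, as you yourself note --- are left as acknowledged obstacles rather than proved, and in both cases the missing idea is a specific external input that your sketch does not supply. For the surjectivity of $x\mapsto\hat x$ onto $F(\Lsc_\sigma(C))$, your proposed reconstruction (recover $x$ locally on each compact base of $C_w$ via classical compact-convex duality, then glue across the lattice of compact idempotents) is exactly the approach whose difficulty you flag, and it is not carried out; the paper avoids it entirely with a soft argument: the range of $x\mapsto\hat x$ is a compact subcone of $F(\Lsc_\sigma(C))$ that contains $0$ and separates the elements of $\Lsc_\sigma(C)$, so by the separation theorem for functionals on Cu-semigroups (\cite[Corollary 4.6]{ultraCu}) it must be all of $F(\Lsc_\sigma(C))$. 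Without that separation theorem (or a completed gluing argument) this direction is open. Similarly, for $S\cong\Lsc_\sigma(F(S))$, ``leaning on Theorem \ref{ACsuprema}'' does not give surjectivity: the nontrivial content is that every function arising as a supremum of a $\lhd$-increasing sequence in $\Lsc(F(S))$ is of the form $\hat s$, which is the representation theorem \cite[Theorem 3.2.1]{robertFS} identifying $S$ with the function class $L(F(S))$; the paper's actual work in this direction is then to prove $L(F(S))=\Lsc_\sigma(F(S))$ using the $\sigma$-compactness of $\bigcup_n\overline{h_n^{-1}((1,\infty])}$ in one direction and Theorem \ref{ACsuprema} in the other. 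Your sketch invokes neither the representation theorem nor a substitute for it.

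A secondary divergence: for O6 you propose Riesz decomposition on a fixed compact base via Theorem \ref{ACwbijection}, but functions in $\Lsc_\sigma(C)$ with different support idempotents do not live on a common base, so this does not obviously assemble into O6 for the whole semigroup. The paper instead proves the stronger statement that $\Lsc_\sigma(C)$ is inf-semilattice ordered, using Edwards' results (Theorem \ref{fromedwards} and \cite[Lemma 3.4]{edwards}) to show $f\wedge g\in\Lsc_\sigma(C)$ and then deducing O6 from $f\le g_1+(g_2\wedge f)$ by distributivity of addition over $\wedge$. Your injectivity argument is also stated too quickly (``there is a function in $\A(C)$ distinguishing them''): the paper has to first establish $\s(x)=\s(y)$ by a limiting argument, then pass to $x+v,y+v$ in $C_v$ for compact $v\ge\s(x)$ and use the Hausdorff weak topology on $V_v$, before taking an infimum over $v$; this is where the abundance of compact idempotents is genuinely used.
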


\begin{proof}
As recalled in Section \ref{sec:conesfromfunctionals}, by the results of  \cite{robertFS}, $F(S)$ is an extended Choquet cone. The bijection between the ideals of $S$ and the idempotents of $F(S)$ translates the abundance of compact ideals of $S$ directly into the abundance of compact idempotents of $F(S)$. By \cite[Theorem 3.2.1]{robertFS}, the mapping 
\[
S\ni s\mapsto \hat s\in \Lsc(F(S))
\] 
is an isomorphism of the Cu-cone $S$ onto the space of functions $f\in \Lsc(F(S))$ expressible as the pointwise supremum of an increasing  sequence $(h_n)_{n=1}^\infty$ in $\Lsc(F(S))$ such that $h_n\lhd h_{n+1}$ for all $n$. The set of all such functions is denoted by $L(F(S))$ in \cite{robertFS}. Let us show that, under our present assumptions,  $L(F(S))=\Lsc_\sigma(F(S))$. Let $f\in \Lsc(F(S))$ be such that $f=\sup h_n$, where $h_n\lhd h_{n+1}$ for all $n$. We have $\overline{h_n^{-1}((1,\infty])}\subseteq f^{-1}((1,\infty])$ for all $n$ (\cite[Proposition 5.1]{ERS}). Hence,
\[
f^{-1}((1,\infty])=\bigcup_n \overline{h_n^{-1}((1,\infty])}.
\]
Thus, $f\in\Lsc_\sigma(F(S))$. Suppose, on the other hand, that $f\in \Lsc_\sigma(F(S))$. 
Then, by Theorem \ref{ACsuprema}, there exists an increasing sequence $(h_n)_{n=1}^\infty$ in $\A(F(S))$ with supremum $f$. Clearly, $h_n'=(1-\frac1n)h_n$ is also increasing, has supremum $f$, and $h_n'\lhd h_{n+1}'$ for all $n$.  Hence, $f\in L(F(S))$. 

Let's turn now to the second part of the theorem. Let $C$ be an extended Choquet cone with an abundance of compact idempotents. Let us show that $\Lsc_\sigma(C)$ satisfies all axioms O1-O6 (Section \ref{sec:conesfromfunctionals}). Let us show first that $\Lsc_\sigma(C)$ is closed under the suprema of increasing sequences: Let $f=\sup_n f_n$, with $(f_n)_{n=1}^\infty$ an increasing sequence in $\Lsc_\sigma(C)$. Then $f^{-1}((1,\infty])=\bigcup_{n=1}^\infty f_n^{-1}((1,\infty])$.  Since the sets on the right side are $\sigma$-compact, so is the left side. Thus, $f\in \Lsc_\sigma(C)$.

Let $f\in \Lsc_\sigma(C)$, and let $(h_n)_{n=1}^\infty$ be an increasing in $\A(C)$  with  supremum $f$. Then $h_n'=(1-\frac1n)h_n$ has supremum $f$ and  $h_n'\ll h_{n+1}'$ for all $n$ (since $h_n'\lhd h_{n+1}'$). This proves O2. Axiom O3 follows at once from the fact that suprema in $\Lsc_\sigma(C)$ are taken pointwise. Suppose that $f_1\ll g_1$ and $f_2\ll g_2$. Choose  $h_1,h_2\in \A(C)$ such that $f_i\leq h_i\lhd g_i$ for $i=1,2$. Then $f_1+f_2\leq h_1+h_2\lhd g_1+g_2$, from which we deduce O4.  

Let's prove O5: Suppose that $f',f,g\in \Lsc_\sigma(C)$ are such that $f'\ll f\leq g$.  Choose $h\in \A(C)$ such that $f'\leq h\lhd f$. By Lemma \ref{lhdll}, there exists $h'\in \Lsc_\sigma(C)$ such that  $h+h'=g$. Then, $f'+h'\leq g\leq f+h'$, proving O5.

Let us prove O6. We prove the stronger property that $\Lsc_\sigma(C)$ is inf-semilattice ordered, i.e., pairwise infima  exist and addition distributes over infima. Recall that, by the results of \cite{edwards}, $\Lsc(C)$ is inf-semilattice ordered (see Theorem \ref{fromedwards}). Let us show that if $f,g\in \Lsc_\sigma(C)$, then $f\wedge g$ is also in $\Lsc_\sigma(C)$. By \cite[Lemma 3.4]{edwards}, for every $x\in C$ there exist $x_1,x_2\in C$, with   $x_1+x_2=x$, such that $(f\wedge g)(x)=f(x_1)+g(x_2)$. It is then clear that
\[
(f\wedge g)^{-1}((1,\infty])=\bigcup_{\substack{a_1,a_2\in \Q, \\ a_1+a_2>1} }
f^{-1}((a_1,\infty])\cup g^{-1}((a_2,\infty]).
\]
Since the right side is a $\sigma$-compact set, $f\wedge g\in \Lsc_\sigma(C)$.  To verify O6, suppose that  $f\leq g_1+g_2$, with $f,g_1,g_2\in \Lsc_\sigma(C)$. Then, using  the distributivity of addition over $\wedge$,  $f\leq g_1+g_2\wedge f$, which proves O6. 

Finally, let us prove that $C\ni x\mapsto \hat x\in F(\Lsc_\sigma(C))$ is an isomorphism of extended Choquet cones.
We consider injectivity first: Let $x,y\in C$ be such that $f(x)=f(y)$ for all $f\in \Lsc_\sigma(C)$. Choose $f\in \A(C)$. Passing to the limit as $n\to \infty$ in $f(\frac1n x)=f(\frac 1ny)$ we deduce that $f(\s(x))=f(\s(y))$ for all $f\in \A(C)$.
Since every function in $\Lsc(C)$ is the supremum of a directed net of functions in $\A(C)$, we have that $f(\s(x))=f(\s(y))$ for all $f\in \Lsc(C)$. Now choosing $f=\chi_w$, for  $w\in \Idem(C)$, we conclude that $\s(x)=\s(y)$, i.e., $x$ and $y$ have the same support idempotent. Set $w=\s(x)=\s(y)$. Choose a compact idempotent $v$ such that $w\leq v$. Then $x+v,y+v\in C_v$, and $f(x+y)=f(y+v)$ for all $f\in \A(C)$. By Theorem \ref{ACwbijection},
$f(x+v)=f(y+v)$ for all $f\in \A_+(C_v)$.  Recall that $C_v$ has a compact base and embeds in a locally convex Hausdorff vector space $V_v$ (Theorem \ref{compactbase}). We have $f(x+v)=f(y+v)$ for all $f\in \A_+(C_v)-\A_+(C_v)$. But $\A_+(C_v)-\A_+(C_v)$ consists of all the affine functions on $C_v$ that vanish at the origin. Thus, $f(x+v)=f(y+v)$
for all such functions, and in particular, for all continuous functionals on $V_v$. Since the weak topology on $V_v$
is Hausforff, $x+v=y+v$. Passing to the infimum over all compact idempotents $v$ such that $w\leq v$,
and using that $C$  has an abundance of compact idempotents, we conclude that $x=x+w=y+w=y$. Thus, the map $x\mapsto \hat x$ is injective.

Let us prove continuity of the map $x\mapsto \hat x$. Let $(x_i)_i$ be a net in $C$ with $x_i\to x$. Let $f',f\in \Lsc_\sigma(C)$, with $f'\ll f$. By the lower semicontinuity of $f$, we have
\[
\hat x(f)=f(x)\leq \liminf_i f(x_i)=\liminf_i \hat x_i(f).
\]
Choose $h\in \A(C)$ such that $f'\leq h\leq f$, which is possible since $f$ is supremum of an increasing sequence in $\A(C)$. Then 
\[
\limsup_i \hat x_i(f')\leq \limsup_i \hat x_i(h)=\limsup_i h(x_i)=h(x)\leq f(x)=\hat x(f).
\]
This shows that $\hat x_i\to \hat x$ in the topology of $F(\Lsc_\sigma(C))$. 

Let us prove surjectivity of the map $x\mapsto \hat x$. (Linearity is  straightforward; continuity of the inverse  is automatic from the fact that the cones are compact and Hausdorff.) The range of the map $x\mapsto \hat x$ is a compact subcone of $F(\Lsc_\sigma(C))$ that separates elements of $\Lsc_\sigma(C)$ and contains 0. By the separation theorem \cite[Corollary 4.6]{ultraCu}, it must be all of  $F(\Lsc_\sigma(C))$.
\end{proof}

Let $S$ be a Cu-cone. We say that $S$ has weak cancellation if $x+z\ll y+z$ implies $x\ll y$ for all $x,y,z\in S$.

\begin{lemma}
Let $C$ be an extended Choquet cone. Let $h,h',g\in \Lsc(C)$ be such that $h\lhd g+h'$ and  $h'\lhd h$. 
Then $\supp(g+h')$ is relatively compact in $\supp(g)$.
 \end{lemma}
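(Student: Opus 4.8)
Set $w=\supp(g)$ and $v=\supp(g+h')$. The plan is to verify directly that $v$ is a compact idempotent in the extended Choquet cone $\{x\in C:x\le w\}$, which is exactly the assertion that $v$ is relatively compact in $w=\supp(g)$. I would begin with the two easy preliminary facts. Since $h'\ge 0$ we have $g+h'\ge g$, so $\{x:(g+h')(x)=0\}\subseteq\{x:g(x)=0\}$ and hence $v=\supp(g+h')\le\supp(g)=w$; thus $v$ is an idempotent below $w$. Moreover, as in the proof of Lemma \ref{inftyfsupp}, the set $\{x:(g+h')(x)=0\}$ is closed under addition, hence upward directed, so it converges to its supremum $v$ and, by lower semicontinuity, $(g+h')(v)=0$.

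Next I would unwind the two $\lhd$-hypotheses into the ingredients each one supplies. From $h'\lhd h$ choose $\delta>0$ with $h'\le(1-\delta)h$; from $h\lhd g+h'$ choose $\epsilon>0$ with $h\le(1-\epsilon)(g+h')$, \emph{together with} the continuity of $h$ at every point where $g+h'$ is finite. The core argument is then the following. Let $(w_i)_i$ be a decreasing net of idempotents with $w_i\le w$ and $\inf_i w_i\le v$; I must produce $i_0$ with $w_{i_0}\le v$. Put $w_\infty=\inf_i w_i$. Decreasing nets converge to their infimum in a compact algebraically ordered monoid, so $w_i\to w_\infty$, and $w_\infty$ is again idempotent. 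Since $w_\infty\le v$ we get $(g+h')(w_\infty)\le(g+h')(v)=0$, so $g+h'$ is finite (indeed zero) at $w_\infty$. Consequently $h$ is continuous at $w_\infty$ and $h(w_\infty)\le(1-\epsilon)(g+h')(w_\infty)=0$, whence $h(w_i)\to h(w_\infty)=0$. But each $w_i$ is idempotent and $h$ is homogeneous, so $h(w_i)=h(2w_i)=2h(w_i)$, forcing $h(w_i)\in\{0,\infty\}$; a net with values in $\{0,\infty\}$ converging to $0$ is eventually $0$, so $h(w_i)=0$ for all large $i$. Then $h'(w_i)\le(1-\delta)h(w_i)=0$, while $g(w_i)=0$ because $w_i\le w=\supp(g)$ and $g(\supp g)=0$. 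Therefore $(g+h')(w_i)=0$, i.e.\ $w_i\le v$, for all large $i$, which is precisely the compactness of $v$ relative to $w$.

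The step requiring care is the middle one: the relation $\lhd$ is engineered to hand us continuity of $h$ exactly at the points where $g+h'$ is finite, and $w_\infty$ is such a point because it lies below $v=\supp(g+h')$. Combined with the dichotomy $h(w_i)\in\{0,\infty\}$ valid on idempotents, this is what upgrades the merely lower-semicontinuous behaviour of $h$ (which on its own would give only the vacuous bound $\liminf h(w_i)\ge h(w_\infty)$) into the eventual \emph{vanishing} of $h(w_i)$ that the argument needs. The second hypothesis $h'\lhd h$ plays the complementary role of transferring this vanishing from $h$ down to $h'$, and hence to $g+h'$; note that neither $g$ alone nor a pointwise comparison is needed, only the interplay of the continuity built into $\lhd$ with the idempotent dichotomy.
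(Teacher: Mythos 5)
Your proof is correct. It establishes exactly the right statement (that $v=\supp(g+h')$ is a compact idempotent of the cone $\{x\in C:x\le w\}$ with $w=\supp(g)$, which is the paper's primary definition of relative compactness; the ``put differently'' version with arbitrary nets in $\Idem(C)$ follows by replacing $v_i$ with $v_i\wedge w$), and every step checks out: the monotonicity of linear lower semicontinuous functions, $f(\supp f)=0$, convergence of decreasing nets to their infima, the $\{0,\infty\}$ dichotomy for the value of an additive map at an idempotent, and the two clauses packed into $h\lhd g+h'$ and $h'\lhd h$ are all used legitimately.

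The route is genuinely different from the paper's, though. The paper never passes to the limit point $w_\infty$: it converts the decreasing net of idempotents $(v_i)$ into the upward directed net of characteristic functions $(\chi_{v_i})$ with supremum $\chi_{\bigwedge_i v_i}\ge \chi_{w_1}\ge g+h'$, invokes the fact (from \cite[Proposition 5.1]{ERS}) that $h\lhd g+h'$ implies $h$ is way below $g+h'$ in the dcpo $\Lsc(C)$ to extract a single index with $h\le\chi_{v_{i_0}}$, and then concludes from $g+h'\le g+h\le\chi_{w_2}+\chi_{v_{i_0}}=\chi_{w_2\wedge v_{i_0}}$. So the paper's argument is order-theoretic and leans on an external compactness-type result about $\lhd$, whereas yours unwinds the definition of $\lhd$ directly and replaces the dcpo argument with a topological one: continuity of $h$ at the limit idempotent $w_\infty$ (available precisely because $g+h'$ is finite there) upgraded to eventual vanishing via the idempotent dichotomy. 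Your version is more self-contained (it does not need \cite[Proposition 5.1]{ERS}) at the cost of being slightly longer; the paper's is shorter once that proposition is taken as given. Both use the hypothesis $h'\lhd h$ only through $h'\le h$ (or $h'\le(1-\delta)h$), to push the vanishing of $h$ back down to $h'$.
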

\begin{proof}
Set $w_1=\supp(g+h')$ and $w_2=\supp(g)$. Let $(v_i)_i$ be a downward directed net of idempotents with $\bigwedge_i v_i\leq w_1$. Then the functions $(\chi_{v_i})_i$ form an upward directed net such that 
$g+h'\leq \chi_{w_1}\leq \sup_i \chi_{v_i}$. Since $h\lhd g+h'$, there exists $i_0$ such that $h\leq \chi_{v_{i_0}}$. We have that
\[
g+h'\leq g+h\leq \chi_{w_2}+\chi_{v_{i_0}}=\chi_{w_2\wedge v_{i_0}}.
\]
Hence, $w_2\wedge v_{i_0}\leq w_1$, which proves the lemma.	
\end{proof}

\begin{theorem}\label{weakcancellation}
Let $C$ be an extended Choquet cone with an abundance of compact idempotents. Then $C$ is strongly connected if and only if $\Lsc_\sigma(C)$ has weak cancellation. 
\end{theorem}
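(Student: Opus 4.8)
The plan is to prove the two implications separately, using the duality $C\cong F(\Lsc_\sigma(C))$ together with the reformulation of strong connectedness given by condition (i) of Theorem \ref{kernelpropertythm}; the preceding lemma (relating $\lhd$ to relative compactness of supports) and the cancellative subcones $C_v$ of Theorem \ref{compactbase} are the two main tools. Throughout I write $S=\Lsc_\sigma(C)$ and use freely that, for $f,g\in S$, one has $f\ll g$ iff there is $h\in\A(C)$ with $f\le h\lhd g$ (this follows from O2 and Theorem \ref{ACsuprema}, exactly as in the proof of the duality theorem).

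For the implication that weak cancellation forces strong connectedness, I would argue by contraposition and produce an explicit failure of cancellation from a gap. So assume $C$ is not strongly connected; by Theorem \ref{kernelpropertythm}(i) and a Zorn minimisation of $w_2$ there are idempotents $w_1<w_2$ with $w_1$ compact relative to $w_2$ and $\{x\in C:w_1\le x\le w_2\}=\{w_1,w_2\}$. As in the analysis preceding \eqref{gap}, this forces the relative cancellative cone $\{x\le w_2:\s(x)=w_1\}$ to equal $\{w_1\}$, so that the building block $\chi_{w_1}$ is continuous and compact when viewed below $w_2$. The candidate failure is then $a=\chi_{w_1}$, $b=\chi_{w_2}$, $c=\chi_{w_1}$: since $\chi_{w_2}\le\chi_{w_1}$ we get $a+c=\chi_{w_1}=b+c$, and compactness of $\chi_{w_1}$ gives $a+c\ll b+c$, whereas $\chi_{w_1}\not\ll\chi_{w_2}$ because any $h\in\A(C)$ with $\chi_{w_1}\le h\lhd\chi_{w_2}$ would have to satisfy both $h(w_2)=\infty$ (as $w_2\not\le w_1$) and $h(w_2)=0$ (as $w_2\le w_2$). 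The one technical point to discharge here is that these characteristic functions be genuinely elements of $S$ and that $\chi_{w_1}$ be compact in $S$; this is arranged by localising below a compact idempotent $v$ furnished by the abundance hypothesis and truncating $\chi_{w_1},\chi_{w_2}$ by $\chi_v$, so that the relevant superlevel sets become $\sigma$-compact.

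For the converse, assume $C$ is strongly connected and suppose $a+c\ll b+c$ in $S$; I want $a\ll b$. Using O2 and O3, write $b=\sup_n b_n$, $c=\sup_n c_n$ with $b_n\ll b$, $c_n\ll c$; then $a+c\le b_{n_0}+c_{n_0}\le b_{n_0}+c$ for some $n_0$. The idea is to cancel $c$ off the locus where it is finite, which is governed by support idempotents: applying the preceding lemma to an interpolating configuration $h\lhd g+h'$, $h'\lhd h$ built from $a+c\le h\lhd b_{n_0}+c$ (with $g=b_{n_0}$ and $h'$ a lower $\lhd$-approximant of $c$) shows that the support idempotent $\supp(b_{n_0}+h')$ is relatively compact in $\supp(b_{n_0})$. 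Strong connectedness, in the form of Theorem \ref{kernelpropertythm}(i), prohibits a proper relatively compact gap with only idempotents between, and this is what lets one conclude that no genuine obstruction to cancellation survives: on each cancellative subcone $C_v$ (for $v$ a compact idempotent dominating the relevant supports) the functions $a,b_{n_0},c$ restrict to honest affine functions on a cone with compact base, where $a+c\le b_{n_0}+c$ really does give $a\le b_{n_0}$ by the vector-space cancellation of Theorem \ref{compactbase}. Passing to the infimum over such $v$ and using the abundance of compact idempotents yields $a\le b_{n_0}$, and since $b_{n_0}\ll b$ this upgrades to $a\ll b$.

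The main obstacle is this converse direction, and specifically the step where strong connectedness is converted into cancellability. The difficulty is that $a+c\le b_{n_0}+c$ only yields $a\le b_{n_0}$ on the set where $c$ is finite, which is not simply cut out by $\supp(c)$; the preceding lemma controls the jump of support idempotents, but one must then verify that strong connectedness (equivalently, the connectedness of every order-interval $\{x\le w\}$) genuinely forces the inequality to propagate across that jump rather than merely across a single idempotent gap. Getting the bookkeeping of supports to interact correctly with the strict relation $\ll$, so that the cancellation obtained in the cancellative subcones $C_v$ reassembles into $a\ll b$ in $S$, is the delicate part; the reverse implication, by contrast, is a direct construction once the membership and compactness of the characteristic building blocks are secured.
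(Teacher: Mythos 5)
Your proposal for the implication ``weak cancellation $\Rightarrow$ strongly connected'' contains a genuine error. The triple $a=c=\chi_{w_1}$, $b=\chi_{w_2}$ satisfies $a+c=\chi_{w_1}=b+c$, but it does \emph{not} satisfy the hypothesis $a+c\ll b+c$ of weak cancellation, because $\chi_{w_1}$ is not a compact element of $\Lsc_\sigma(C)$. The gap hypothesis only gives you $D_{w_1}=\{w_1\}$ for $D=\{x\in C: x\le w_2\}$, i.e.\ continuity and self-compactness of $\chi_{w_1}|_D$ \emph{relative to $D$}; globally $C_{w_1}$ can be nontrivial, and then for any $h\in\A_{w_1}(C)$ the increasing sequence $nh$ has supremum $\infty\cdot h=\chi_{w_1}$ (Lemma \ref{inftyfsupp}) while $\chi_{w_1}\nleq nh$ for every $n$ (as $nh$ is finite on $C_{w_1}$). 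Truncating by $\chi_v$ addresses $\sigma$-compactness of superlevel sets, not the way-below relation, so it does not repair this. The paper's proof circumvents exactly this obstruction: it works with continuous approximants $h_j\lhd\chi_{w_1}$ and $l_k\lhd\chi_{w_2}$, uses the \emph{relative} compactness of $\chi_{w_1}|_D$ only to get $\chi_{w_1}\le h_i+\chi_{w_2}$ (the $\chi_{w_2}$ summand absorbing everything outside $D$), and then applies weak cancellation to the genuine instance $3h_j\le h_i+l_k$ with $h_i\lhd 2h_j$ to force $h_j\le l_k$ and hence $\chi_{w_1}\le\chi_{w_2}$, a contradiction. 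Your witnesses never enter a valid instance of the hypothesis.

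The forward direction follows the paper's architecture (approximate $g$ and $h$ by $\lhd$-increasing sequences in $\A(C)$, reduce to showing $f\le g_m$, and control $\supp(g_m)$ versus $\supp(h_m)$ via the lemma on relative compactness of supports), but you stop precisely at the step you yourself flag as delicate, and that step is where all the content lies. Strong connectedness does not merely ``prohibit a gap'': it \emph{produces} a non-idempotent $x$ with $w_1\le x\le w_2$ and $\s(x)=w_1$, and one must then evaluate the inequality $h\le g_m+h_m$ at this $x$, use $g_m(x)=0$ (since $x\le w_2=\supp(g_m)$) together with $h_m\le(1-\delta)h$ to force $h(x)\in\{0,\infty\}$, and contradict $0<(g_m+h_m)(x)<\infty$, which holds by Theorem \ref{ACwbijection} because $x\in C_{w_1}\setminus\{w_1\}$ and $\supp(g_m+h_m)=w_1$. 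This yields $\supp(g_m)\le\supp(h_m)$, after which the cancellation is a pointwise computation in $[0,\infty)$; your alternative assembly via the subcones $C_v$ and an infimum over compact idempotents $v$ is not needed and is not obviously sound as stated, since the restrictions of $g_m,h_m$ to $C_v$ are finite only when $v$ lies below their supports. As written, both halves of the argument are incomplete, and the second half is based on a false claim.
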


\begin{proof}
Suppose first that $C$ is strongly connected. Let $f,g,h\in \Lsc_\sigma(C)$ be such  that $f+h\ll g+h$. 
Choose $\lhd$-increasing sequences $(g_n)_{n=1}^\infty$ and $(h_n)_{n=1}^\infty$ in $\A(C)$ such that  $g=\sup_n g_n$ and $h=\sup_n h_n$. Then  $f+h\ll g_m+h_m$ for some $m$. We will be done once we have shown that $f\leq g_m$. 

Let $x\in C$. If $g_m(x)=\infty$, then indeed $f(x)\leq \infty=g_m(x)$. Suppose that $g_m(x)<\infty$. If $h_m(x)<\infty$, then we can cancel $h_m(x)$ in $f(x)+h_m(x)\leq g_m(x)+h_m(x)$ to obtain the desired $f(x)\leq g_m(x)$. It thus suffices to show that $g_m(x)<\infty$ implies $h_m(x)<\infty$, i.e., that $\supp(g_m)\le \supp(h_m)$. Let $w_1=\supp(g_m+h_m)$ and $w_2=\supp(g_m)$. Then $w_1\le w_2$ and $w_1$ is relatively compact in $w_2$, by the previous lemma. Suppose for the sake of contradiction that $w_1\ne w_2$. By strong connectedness, there exists $x\in C$ such that $w_1\le x\le w_2$, with $\s(x)=w_1$ and $x\ne w_1$. Then, 
\begin{align*}
h(x) &\leq g_m(x)+h_m(x)\\
	 &=h_m(x)\le (1-\delta)h(x),
\end{align*} 
for some $\delta>0$. Hence, $h(x)\in\{0,\infty\}$.  If $h(x)=0$, then $h_m(x)=g_m(x)=0$, while if $h(x)=\infty$, then
$g_m(x)+h_m(x)\geq h(x)=\infty$. In either case, we get a contradiction with $0<(g_m+h_m)(x)<\infty$, which holds by Theorem \ref{ACwbijection}.  Hence, $w_1=w_2$. We thus have that $\supp(g_m)=\supp(g_m+h_m)\le \supp(h_m)$.
	
Suppose conversely that $\Lsc_\sigma(C)$ has weak cancellation. Let $w_1\leq w_2$ be idempotents in $C$, with $w_1$ relatively compact in $w_2$, and $w_1\neq w_2$. Further, using Zorn's lemma, choose $w_2$ minimal such that $w_1\neq w_2$ and $w_1$ is relatively compact in $w_2$. Suppose for the sake of contradiction that $w_1\leq x\leq w_2$ implies $x\in \{w_1,w_2\}$. Let $D=\{x\in C:x\leq w_2\}$. Then $D$ is an extended Choquet cone and $w_1$ is a compact idempotent in $D$. Further, $D_{w_1}=\{w_1\}$. So, as shown in the course of the proof of Theorem \ref{ACwbijection}, $\chi_{w_1}|_D$ is continuous on $D$. Let $(h_i)_i\in \A(C)$ be an upward directed net with supremum $\chi_{w_1}$. Since $\chi_{w_1}|_D\lhd \chi_{w_1}|_D$, there exists $i$
such that $\chi_{w_1}|_D\leq h_i|_D$. It follows that $\chi_{w_1}\leq h_i+\chi_{w_2}$ (as functions on $C$). 
Fix an index $j\geq i$. Then 
\[
3h_j\lhd \chi_{w_1}\leq h_i+\chi_{w_2}.
\]
Now let $(l_k)_k$ be an upward directed net in $\A(C)$ with supremum $\chi_{w_2}$. Then there exists an index $k$ such that $3h_j\leq h_i+l_k$. Observe that $h_i\lhd 2h_k$. By weak cancellation in $\Lsc_\sigma(C)$, we conclude that $h_j\leq l_k$. (Note: we have used weak cancellation in the form $f+h\leq g+h'$ and $h'\ll h$ imply $f\leq g$.) Thus, $h_j\leq \chi_{w_2}$ for all $j\geq i$, implying that $\chi_{w_1}\leq \chi_{w_2}$. This contradicts that $w_1\neq w_2$.
\end{proof}

In the following section we will make use of the following form of Riesz decomposition:

\begin{theorem}\label{ACRiesz}
Let $C$ be an extended Choquet cone that is strongly connected and has an abundance of compact idempotents. Let $f,g_1,g_2\in \A(C)$ be such that  $f\lhd g_1+g_2$. Then there exist $f_1,f_2\in \A(C)$ such that $f=f_1+f_2$, $f_1\lhd g_1$, and $f_2\lhd g_2$.
\end{theorem}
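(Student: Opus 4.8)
The plan is to reduce the Riesz decomposition for functions in $\A(C)$ to a known Riesz-type decomposition for functionals on the Cu-cone $\Lsc_\sigma(C)$, exploiting the duality of Theorem \ref{dualitythm} and the weak cancellation supplied by Theorem \ref{weakcancellation}. First I would localize to a compact support idempotent. Since $f\in\A(C)$, its support $w=\supp(f)$ is a compact idempotent by Theorem \ref{ACwbijection}, and the condition $f\lhd g_1+g_2$ forces $f\leq(1-\varepsilon)(g_1+g_2)$, so that $\supp(f)\geq\supp(g_1+g_2)=\supp(g_1)\wedge\supp(g_2)$. Replacing $g_1,g_2$ by $g_1\wedge\chi_w$ and $g_2\wedge\chi_w$ (which lie in $\A(C)$ by Theorems \ref{fromedwards} and \ref{ACwbijection} and do not decrease below $f$ on the relevant region), I may assume all three functions have support above $w$ and work inside the cancellative subcone $C_w$, whose closure of differences is the locally convex vector space $V_w$ with compact base $K$ from Theorem \ref{compactbase}.

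Having localized, the core is an affine Riesz decomposition on the compact base. Restricting to $C_w$ via the isomorphism $\A_w(C)\cong\A_+(C_w)$ of Theorem \ref{ACwbijection}, I would view $f,g_1,g_2$ as continuous affine functions on the compact base $K\subseteq V_w$, strictly positive on $K$, with $f<g_1+g_2$ pointwise (the strict inequality coming from $f\lhd g_1+g_2$). The goal becomes: write $f=f_1+f_2$ with $f_i$ continuous, affine, and $f_i\lhd g_i$, i.e.\ $f_i\leq(1-\delta)g_i$ and $f_i$ strictly positive where needed. The natural choice is $f_1=f\cdot\frac{g_1}{g_1+g_2}$ and $f_2=f\cdot\frac{g_2}{g_1+g_2}$, but these are not affine. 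Instead I would use the order-theoretic Riesz decomposition: because $C$ is strongly connected, $\Lsc_\sigma(C)$ has weak cancellation (Theorem \ref{weakcancellation}), and $\A(C)\subseteq\Lsc_\sigma(C)$ inherits the Riesz refinement behavior dual to the lattice structure on $C$. Concretely, $f\leq g_1+g_2$ in the inf-semilattice-ordered cone $\Lsc_\sigma(C)$ (established in the proof of Theorem \ref{dualitythm}) yields, by the distributivity of addition over $\wedge$, the candidates $f_1:=f\wedge g_1$ and $f_2$ a complement of $f_1$ in $f$; here I would invoke that addition distributes over $\wedge$ to get $f=(f\wedge g_1)+(f\ominus(f\wedge g_1))$ in a controlled way, using Lemma \ref{lhdll} to extract a genuine additive complement.

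The precise mechanism I would use is the following. From $f\lhd g_1+g_2$ choose $\varepsilon>0$ with $f\leq(1-\varepsilon)(g_1+g_2)$. Set $f_1=f\wedge((1-\varepsilon/2)g_1)$, which lies in $\A(C)$ since the inf of two $\A(C)$ functions over a cancellative subcone with compact base is again continuous (this requires checking that $\wedge$ of continuous affine functions on $K$ stays continuous, which holds because the infimum in $\Lsc_\sigma(C)$ is computed via the decomposition from \cite[Lemma 3.4]{edwards} and, on the compact base, reduces to a concave-type minimum that remains continuous). By construction $f_1\leq(1-\varepsilon/2)g_1$, so $f_1\lhd g_1$. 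By Lemma \ref{lhdll} applied to $f_1\lhd f$ (after a harmless $1-\delta$ scaling) there is $f_2\in\A(C)$ with $f_1+f_2=f$ and $f_2\geq\delta f$. It then remains to verify $f_2\lhd g_2$; this is where strong connectedness (equivalently, weak cancellation) does the essential work. Using $f=f_1+f_2\leq(1-\varepsilon)(g_1+g_2)$ together with $f_1\geq f-(1-\varepsilon/2)g_1$ on the support region, one cancels the $g_1$-contribution to obtain $f_2\leq(1-\varepsilon')g_2$ for some $\varepsilon'>0$.

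I expect the main obstacle to be precisely this last cancellation step: deducing $f_2\lhd g_2$ from the sum estimate. Pointwise cancellation is illegitimate because the cone is non-cancellative wherever the functions take the value $\infty$, and $\wedge$ does not interact with $+$ as cleanly as in a vector lattice. The clean route is to avoid explicit formulas for $f_2$ and instead argue entirely within $\Lsc_\sigma(C)$: interpret $f,g_1,g_2$ as elements of the Cu-cone, use that $f\ll g_1+g_2$ follows from $f\lhd g_1+g_2$ (via \cite[Proposition 5.1]{ERS}), and apply axiom O6 together with weak cancellation to split $f$ as $f_1'+f_2'$ with $f_i'\ll g_i$ in $\Lsc_\sigma(C)$. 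Each $f_i'$ is then a supremum of an increasing sequence in $\A(C)$, and since $f_i'\ll g_i$ one can pick a single $\A(C)$ representative $f_i$ with $f_i\lhd g_i$; a final adjustment using Lemma \ref{lhdll} and the refinement $f=f_1+f_2$ closes the argument. The delicate point throughout is keeping the decomposition inside the continuous class $\A(C)$ rather than merely $\Lsc_\sigma(C)$, which is why the reduction to the compact base $K$ of $C_w$—where continuity of finite infima and the affine Riesz property are available—is indispensable.
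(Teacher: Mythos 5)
Your overall strategy---use distributivity of addition over $\wedge$ to produce a first summand dominated by $g_1$, extract an additive complement via Lemma \ref{lhdll}, and finish with weak cancellation (Theorem \ref{weakcancellation})---is the same as the paper's, and the ingredients you name are the right ones. But the execution has a genuine gap at its central step: you set $f_1=f\wedge((1-\varepsilon/2)g_1)$ and assert that this lies in $\A(C)$ because ``the inf of two $\A(C)$ functions \dots is again continuous.'' This is false in general. The infimum in $\Lsc(C)$ is the inf-convolution $(f\wedge g)(x)=\inf\{f(x_1)+g(x_2):x_1+x_2=x\}$ of \cite[Lemma 3.4]{edwards}, not a pointwise minimum, and even restricted to a compact base of $C_w$ the lattice infimum of two continuous affine functions on a Choquet simplex need not be continuous (the continuous affine functions on a simplex form a lattice only in the Bauer case). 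The same objection undermines your preliminary localization, which needs $g_i\wedge\chi_w\in\A(C)$; note also that $\chi_w$ itself is continuous only when $C_w=\{w\}$. The paper sidesteps this exactly where you stumble: it keeps $f\wedge g_1$ in $\Lsc(C)$, writes it as a directed supremum of elements of $\A(C)$ via Theorem \ref{ACsuprema}, and from $f\lhd(f\wedge g_1)+g_2$ extracts a single continuous $h$ with $h\lhd f\wedge g_1$ \emph{and} $f\lhd h+g_2$.

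The second, related gap is the cancellation step you yourself flag as the obstacle. With your $f_1$ you only obtain $f\le f_1+(1-\varepsilon)g_2$, i.e.\ $f_1+f_2\le f_1+(1-\varepsilon)g_2$; weak cancellation needs the hypothesis $f_1+f_2\ll f_1+g_2$, and the missing factor $(1-\delta)$ on the $f_1$ term means this way-below relation is not available (getting it from O4 would require $f_1\ll f_1$, which fails for every nonzero element of $\A(C)$). The approximant $h\lhd f\wedge g_1$ supplies precisely the needed room: $f\lhd h+g_2$ gives $h+l\ll h+g_2$ for the complement $l$ from Lemma \ref{lhdll}, and weak cancellation then yields $l\ll g_2$, hence $l\lhd g_2$. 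Your closing suggestion to argue abstractly via O6 in $\Lsc_\sigma(C)$ points in the right direction but is not carried out: O6 only produces an inequality $f'\le f_1'+f_2'$, and upgrading this to the exact decomposition $f=f_1+f_2$ with continuous summands satisfying $f_i\lhd g_i$ is the actual content of the theorem.
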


\begin{proof}
Let $\epsilon>0$ be such that $f\le (1-\varepsilon)g_1+(1-\varepsilon)g_2$. Then, using the distributivity of addition over $\wedge$, 
\begin{align*}
f\le f\wedge ((1-\varepsilon)g_1)+(1-\varepsilon)g_2=(1-\varepsilon)( (f\wedge g_1)+g_2).
\end{align*}
Thus, $f\lhd (f\wedge g_1)+g_2$ (recall that $f$ is continuous). By Theorem \ref{ACsuprema}, $f\wedge g_1$ is the supremum of a net of functions in $\A(C)$. Thus, there exists $h\in \A(C)$ such that 
$f\lhd h+g_2$ and $h\lhd (f\wedge g_1)$. By Lemma \ref{lhdll}, we can find $l\in \A(C)$ such that $f=h+l$. 
Then $h+l\lhd h+g_2$. By weak cancellation in $\Lsc_\sigma(C)$ (Theorem \ref{weakcancellation}), we have that $l\lhd g_2$. Setting $f_1=h$ and $f_2=l$ yields the desired result.
\end{proof}

\section{Proof of Theorem \ref{mainchar}}\label{proofofmainchar}
Throughout this section $C$ denotes an extended Choquet cone that is strongly connected and has an abundance of compact idempotents. 

\subsection{The triangle lemma}
To prove Theorem \ref{mainchar} we follow a strategy similar to the proof of  the Effros-Handelman-Shen theorem (\cite{EHS}). The key step in this proof is establishing a ``triangle lemma'', Theorem \ref{triangletheorem} below.

\begin{lemma}\label{Cugenerators}
A linear map $\phi\colon [0,\infty]\to \Lsc_\sigma(C)$ is a Cu-morphism if and only if $\phi(\infty)=\infty \cdot \phi(1)$ and $\phi(1)\in \A(C)$.
\end{lemma}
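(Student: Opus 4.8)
The plan is to exploit the fact that, by linearity, $\phi$ is completely determined by the two values $\phi(1)$ and $\phi(\infty)$ (indeed $\phi(t)=t\phi(1)$ for $t\in(0,\infty)$ and $\phi(0)=0$), and that the only requirements for $\phi$ to be a Cu-morphism beyond linearity are the preservation of suprema of increasing sequences and of the way-below relation $\ll$. Both directions then amount to transporting, through $\phi$, the elementary description of $\ll$ in $[0,\infty]$: $s\ll 0$ iff $s=0$; $s\ll t$ iff $s<t$ for $0<t<\infty$; and $s\ll\infty$ iff $s<\infty$.

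For the forward direction I would assume $\phi$ is a Cu-morphism. The sequence $(n)_{n\in\N}$ is increasing in $[0,\infty]$ with supremum $\infty$, so preservation of increasing suprema gives $\phi(\infty)=\sup_n\phi(n)=\sup_n n\phi(1)=\infty\cdot\phi(1)$, the first asserted identity. For the second I would show $\phi(1)\in\A(C)$, i.e.\ that $\phi(1)$ is continuous. Since $1-\epsilon\ll 1$ in $[0,\infty]$ for each $\epsilon\in(0,1)$, preservation of $\ll$ yields $(1-\epsilon)\phi(1)\ll\phi(1)$. Writing $f=\phi(1)=\sup_n h_n$ with $(h_n)_n$ increasing in $\A(C)$ (Theorem \ref{ACsuprema}), the way-below relation produces, for each $\epsilon$, a continuous $h_\epsilon\in\A(C)$ with $(1-\epsilon)f\le h_\epsilon\le f$, hence $f\le \tfrac{1}{1-\epsilon}h_\epsilon$ pointwise. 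Fixing $x_0$ with $f(x_0)<\infty$ and a net $x_i\to x_0$, and using $h_\epsilon(x_0)\le f(x_0)<\infty$ together with continuity of $h_\epsilon$,
\[
\limsup_i f(x_i)\le \tfrac{1}{1-\epsilon}\limsup_i h_\epsilon(x_i)=\tfrac{1}{1-\epsilon}h_\epsilon(x_0)\le \tfrac{1}{1-\epsilon}f(x_0).
\]
Letting $\epsilon\to 0$ gives $\limsup_i f(x_i)\le f(x_0)$, which with lower semicontinuity forces continuity of $f$ at $x_0$; at points where $f=\infty$ continuity is automatic from lower semicontinuity. Thus $f=\phi(1)\in\A(C)$. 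I expect this upper-semicontinuity trick via the factor $\tfrac{1}{1-\epsilon}$ to be the one genuinely non-routine step; everything else is bookkeeping with $\ll$.

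For the converse I would assume $\phi(\infty)=\infty\cdot\phi(1)$ and $\phi(1)\in\A(C)$, and verify the two remaining conditions. For suprema, let $t_n\uparrow t$ in $[0,\infty]$. If $t<\infty$ then $\phi(t_n)=t_n\phi(1)\uparrow t\phi(1)=\phi(t)$ by compatibility of scalar multiplication with increasing suprema in a Cu-cone. If $t=\infty$ and some $t_n=\infty$ the claim is trivial; if all $t_n<\infty$ then $\sup_n\phi(t_n)=\sup_n t_n\phi(1)=\sup_m m\phi(1)=\infty\cdot\phi(1)=\phi(\infty)$, using cofinality of $(t_n)$ in $(0,\infty)$ and the standing hypothesis. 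Hence $\phi$ preserves increasing suprema.

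For $\ll$, suppose $s\ll t$ in $[0,\infty]$. The case $s=0$ is trivial since $\phi(0)=0$ is way below everything. If $0<s<t<\infty$, then $s\phi(1)$ and $t\phi(1)$ both lie in $\A(C)$ and $s\phi(1)\le (s/t)\,t\phi(1)$ with $s/t<1$; as $s\phi(1)$ is continuous, this gives $s\phi(1)\lhd t\phi(1)$, whence $\phi(s)=s\phi(1)\ll t\phi(1)=\phi(t)$ by \cite[Proposition 5.1]{ERS}. Finally, if $0<s<\infty=t$, the previous case gives $s\phi(1)\ll(s+1)\phi(1)$, and since $(s+1)\phi(1)\le\sup_m m\phi(1)=\infty\cdot\phi(1)=\phi(\infty)$, we conclude $\phi(s)\ll\phi(\infty)=\phi(t)$ because the way-below relation passes to larger elements on the right. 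This shows $\phi$ is a Cu-morphism and finishes the proof.
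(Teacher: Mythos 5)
Your proof is correct and follows essentially the same route as the paper's: the forward direction uses $1-\epsilon\ll 1$ and an interpolating continuous function to get upper semicontinuity of $\phi(1)$ via the same $\limsup$ estimate, and the converse rests on the observation that $\alpha f\lhd\beta f$ for $\alpha<\beta$ when $f\in\A(C)$. Your version merely spells out the case analysis for $\ll$ in $[0,\infty]$ that the paper leaves as "readily verified."
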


\begin{proof}
Suppose that $\phi$ is a Cu-morphism. That $\phi(\infty)=\infty \cdot \phi(1)$ follows at once from  $\phi$ being
supremum preserving and additive. Set $f=\phi(1)$. To prove the continuity of $f$, it suffices to show that it is upper semicontinuous, since it is already lower semicontinuous by assumption. Fix $\epsilon>0$. Since $1-\epsilon \ll 1$ in $[0,\infty]$,
we have $(1-\epsilon)f \ll f$ in $\Lsc_\sigma(C)$. Choose $g\in \A(C)$  such that $(1-\epsilon)f\leq g\leq f$. Let $x_i\to x$ be a convergent net in $C$.  Then,
\[
(1-\epsilon)\limsup_i f(x_i)\leq \limsup g(x_i)=g(x)\leq f(x).
\]
Letting $\epsilon\to 0$, we get that $\limsup f(x_i)\leq f(x)$. Thus, $f$ is upper semicontinuous. 

Conversely, suppose that $\phi(1)\in \A(C)$ and $\phi(\infty)=\infty \cdot \phi(1)$. Observe that if $f\in \A(C)$ then
$\alpha f\lhd \beta f$ for all scalars $0\leq \alpha<\beta\leq \infty$. Hence, $\phi(\alpha)\ll \phi(\beta)$ in $\Lsc_\sigma(C)$ whenever $\alpha\ll \beta$ in $[0,\infty]$, i.e., $\phi$ preserves the way below relation.  The rest of the properties of $\phi$ are readily verified.
\end{proof}

The core of the proof of Theorem \ref{triangletheorem} (the ``triangle lemma") is contained in the following lemma:
\begin{lemma}\label{coretrianglelemma}
Let $\phi\colon  [0,\infty]^n\to \Lsc_\sigma(C)$ be a Cu-morphism. Let $x,y\in [0,\infty)^n\cap \Z^n$ be such that $\phi(x)\ll \phi(y)$. Then there exist $N\in \N$ and Cu-morphisms 
\[
[0,\infty]^n \stackrel{Q}{\longrightarrow} [0,\infty]^N\stackrel{\psi}{\longrightarrow} \Lsc_\sigma(C),
\]
such that $\psi Q=\phi$ and $Qx\leq Qy$. Moreover, $Q$ maps $[0,\infty)^n\cap \Z^n$ to $[0,\infty)^N\cap \Z^N$
\end{lemma}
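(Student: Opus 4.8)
The plan is to translate everything into the language of functions in $\A(C)$ and then build the factorization by a Riesz-refinement argument, with weak cancellation compensating for the absence of cancellation. By (the evident $n$-fold version of) Lemma \ref{Cugenerators}, a Cu-morphism $\phi\colon[0,\infty]^n\to\Lsc_\sigma(C)$ is the same datum as the tuple $f_i:=\phi(e_i)\in\A(C)$, $i=1,\dots,n$, and likewise $\psi\colon[0,\infty]^N\to\Lsc_\sigma(C)$ corresponds to a tuple $(g_j)_{j=1}^N$ in $\A(C)$, while a lattice-preserving Cu-morphism $Q\colon[0,\infty]^n\to[0,\infty]^N$ is given by a matrix $(q_{ji})$ with entries in $\N$. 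Then $\psi Q=\phi$ and $Qx\le Qy$ unravel to: $f_i=\sum_j q_{ji}g_j$ for all $i$, and $\langle q_j,y-x\rangle\ge 0$ for every row $q_j=(q_{j1},\dots,q_{jn})$. Writing $m=y-x\in\Z^n$ and partitioning $\{1,\dots,n\}$ into $I_-=\{i:m_i<0\}$, $I_+=\{k:m_k>0\}$, $I_0=\{i:m_i=0\}$, with $p_i=-m_i$ ($i\in I_-$) and $q_k=m_k$ ($k\in I_+$), and setting $\gamma_i=\min(x_i,y_i)$, the hypothesis reads $\sum_i\gamma_if_i+\sum_{i\in I_-}p_if_i\ll\sum_i\gamma_if_i+\sum_{k\in I_+}q_kf_k$. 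Applying weak cancellation (Theorem \ref{weakcancellation}, available since $C$ is strongly connected) with $z=\sum_i\gamma_if_i\in\Lsc_\sigma(C)$ strips the common summand, leaving
\[
\textstyle\sum_{i\in I_-}p_if_i\ll\sum_{k\in I_+}q_kf_k,
\]
and hence, both sides lying in $\A(C)$, the stronger relation $\sum_{i\in I_-}p_if_i\lhd\sum_{k\in I_+}q_kf_k$.

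The crux is a refinement lemma: if $\sum_{a}A_a\lhd\sum_{b}B_b$ in $\A(C)$ (finite sums), then there exist $c_{ab}\in\A(C)$ with $A_a=\sum_b c_{ab}$ and $\sum_a c_{ab}\lhd B_b$ for all $a,b$. I would prove this by induction on the number of left-hand terms: peel off $A_r$, use $A_r\le\sum_a A_a\ll\sum_b B_b$ to get $A_r\lhd\sum_b B_b$, and decompose $A_r=\sum_b c_{rb}$ with $c_{rb}\lhd B_b$ by iterating the Riesz decomposition of Theorem \ref{ACRiesz}; then replace each $B_b$ by $B_b-c_{rb}\in\A(C)$ (Lemma \ref{lhdll}) and invoke weak cancellation once more to see $\sum_{a<r}A_a\lhd\sum_b(B_b-c_{rb})$ before recursing. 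Building a factor $(1-\varepsilon/2)$ of slack into the $B_b$ at the outset forces the column bounds to come out as genuine $\lhd$ relations rather than mere inequalities. Applying this with $A_i=p_if_i$ ($i\in I_-$) and $B_k=q_kf_k$ ($k\in I_+$) produces $c_{ik}\in\A(C)$ with $\sum_{k\in I_+}c_{ik}=p_if_i$ and $\sum_{i\in I_-}c_{ik}\lhd q_kf_k$.

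Now the scaling and assembly. Put $g_{ik}=\tfrac{1}{p_iq_k}c_{ik}\in\A(C)$ and $r_k=f_k-\tfrac{1}{q_k}\sum_{i\in I_-}c_{ik}\in\A(C)$, the subtraction being legitimate by Lemma \ref{lhdll} since $\tfrac{1}{q_k}\sum_i c_{ik}\lhd f_k$. Then $\sum_{k\in I_+}q_kg_{ik}=\tfrac{1}{p_i}\sum_k c_{ik}=f_i$ and $\sum_{i\in I_-}p_ig_{ik}+r_k=f_k$. I take as generators the functions $g_{ik}$ ($i\in I_-,\,k\in I_+$), $r_k$ ($k\in I_+$), and $f_i$ ($i\in I_0$), assigning to them the rows $q_ke_i+p_ie_k$, $e_k$, and $e_i$ of $Q$, respectively. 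These rows have entries in $\N$, so $Q$ is a lattice-preserving Cu-morphism and $\psi$ is a Cu-morphism by Lemma \ref{Cugenerators}; the displayed identities give $\psi Qe_i=f_i$ in every coordinate, that is $\psi Q=\phi$. The order condition holds row by row: $\langle q_ke_i+p_ie_k,\,y-x\rangle=q_k(-p_i)+p_iq_k=0$, while $\langle e_k,y-x\rangle=q_k>0$ and $\langle e_i,y-x\rangle=0$, whence $Qx\le Qy$. The degenerate cases are immediate: if $I_-=\varnothing$ take $Q=\mathrm{id}$ and $\psi=\phi$; if $I_+=\varnothing$ then $\sum_{i\in I_-}p_if_i\ll 0$ forces $f_i=0$ for $i\in I_-$, so those coordinates require no generator.

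The main obstacle is the refinement lemma of the second paragraph: in a cancellative dimension group this is the classical Riesz interpolation step underlying Effros--Handelman--Shen, but here $\Lsc_\sigma(C)$ is non-cancellative, and the inductive peeling only closes up because of weak cancellation, which is exactly the dual incarnation of strong connectedness (Theorem \ref{weakcancellation}). The second delicate point is the choice of coefficient rows $q_ke_i+p_ie_k$: dividing $c_{ik}$ by $p_iq_k$ and re-weighting by precisely these integers is what makes each generator's row annihilate $y-x$, and it is the multiplicities $p_i,q_k$ furnished by the hypothesis $\phi(x)\ll\phi(y)$ that make this balancing possible.
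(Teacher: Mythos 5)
Your proof is correct, but it takes a genuinely different route from the paper's. The paper proves Lemma \ref{coretrianglelemma} by mimicking the Effros--Handelman--Shen argument: it attaches to $(\phi,x,y)$ a lexicographically ordered degree $(M,n_1,n_2,n)$, splits into the cases $M_1\geq M_2$ and $M_2>M_1$ (the latter requiring an auxiliary generator $H$ with $\phi(H)=h$ where $h+\phi(x)=(1-\epsilon)\phi(y)$), and at each step performs one Riesz decomposition that strictly lowers the degree. You instead do everything in one shot: cancel the common summand $\sum_i\gamma_if_i$ by weak cancellation to isolate $\sum_{i\in I_-}p_if_i\lhd\sum_{k\in I_+}q_kf_k$, prove a full matrix refinement in $\A(C)$ (exact row sums $p_if_i=\sum_kc_{ik}$, column bounds $\sum_ic_{ik}\lhd q_kf_k$) by peeling rows with Theorem \ref{ACRiesz}, Lemma \ref{lhdll} and weak cancellation, and then rescale by $\frac{1}{p_iq_k}$ and assign the integer rows $q_ke_i+p_ie_k$, which are orthogonal to $y-x$ by design. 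The essential difference is that you exploit divisibility by positive reals in $\A(C)$ --- unavailable in the ordered-group setting that forces EHS (and the paper) into the degree induction --- to collapse the whole induction into a single refinement-plus-rescaling step; the price is the auxiliary matrix refinement lemma, but that is a routine consequence of the same two pillars (Riesz decomposition in $\A(C)$ and weak cancellation) that drive the paper's induction. Your two flagged delicate points are handled correctly: the $(1-\varepsilon/2)$ slack (or, alternatively, the clause $h\geq\epsilon g$ in Lemma \ref{lhdll}) does force the column bounds to be genuine $\lhd$ relations, and the integrality of $p_i,q_k$ (from $x,y\in\Z^n$) is exactly what makes $Q$ integer-valued, matching the paper's use of integrality through the integer degree $M$. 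Both proofs deliver the same conclusion with the same hypotheses; yours is shorter and arguably more transparent, while the paper's stays closer to the classical template and avoids stating the matrix refinement as a separate lemma.
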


\begin{proof}
Let $x=(x_1,\ldots,x_n)$, $y=(y_1,\ldots,y_n)$, and $\phi$ be as in the statement of the lemma. Let $(E_i)_{i=1}^n$ denote the canonical basis of $[0,\infty]^n$. Set $f_i=\phi(E_i)$ for $i=1,\ldots,n$, which belong to $\A(C)$ by Lemma \ref{Cugenerators}. Let
\[
M=\max_i |x_i-y_i|, \, n_1=\#\{i:x_i-y_i=M\}, \, n_2=\#\{i:y_i-x_i=M\}.
\] 
Let us define the degree of the triple $(\phi,x,y)$, denoted $\deg (\phi,x,y)$, as the vector $(M,n_1, n_2,n)$. We order the degrees lexicographically. We will prove the lemma by induction on the degree of the triple $(\phi, x, y)$. Let us first deal with the case  $n=1$, i.e., the domain of $\phi$ is $[0,\infty]$. Since $[0,\infty]$ is totally ordered, either $x\leq y$ or $y<x$. In the first case, setting $Q$ the identity and $\phi=\psi$ gives the result. If $y<x$, then $\phi(y)\ll \phi(x)$, which, together with $\phi(x)\ll \phi(y)$, implies  that $\phi(x)=\phi(y)$ is a compact element in $\A(C)$. The only compact element in $\A(C)$ is 0, for if $f\ll f$, then $f\ll (1-\epsilon)f$ for some $\epsilon>0$, and so $f=0$ by weak cancellation (Theorem \ref{weakcancellation}). Thus, $\phi(x)=0$, which in turn implies that $\phi=0$. We can then choose $Q$ and $\psi$ to be the 0 maps.

Suppose now that $\phi$, $x$, $y$ are as in the lemma, and that the lemma holds for all triples $(\phi', x', y')$ with smaller degree. If $x\leq y$, then we can choose $Q$ the identity map, $\phi=\psi$, and we are done. Let us thus assume that $x\nleq y$. If $x_{i_0}=y_{i_0}$ for some index $i_0$, then we can write $x=x_{i_0}E_{i_0}+\tilde x$ and $y=x_{i_0}E_{i_0}+\tilde y$, where $\tilde x,\tilde y$ belong to  $S:=\mathrm{span}(E_i)_{i\neq i_0}\cong [0,\infty]^{n-1}$. By weak cancellation, $\phi(x)\ll \phi(y)$ implies that $\phi(\tilde x)\ll \phi(\tilde y)$. Since $\tilde x,\tilde y$ belong to a space of smaller dimension,  the degree of $(\phi|_S, \tilde x,\tilde y)$ is smaller than that of $(\phi,x,y)$ ($M,n_1,n_2$ have not increased, while $n$ has decreased). By the induction hypothesis, there exist maps $\tilde Q\colon S\to [0,\infty]^N$ and $\tilde\psi\colon [0,\infty]^N\to \Lsc_\sigma(C)$ such that $\tilde Q\tilde x\leq \tilde Q\tilde y$
and $\phi|_S=\tilde\psi\tilde Q$. Define $Q\colon [0,\infty]^n\to [0,\infty]^{N+1}$ as the extension of $\tilde Q$ such that $QE_{i_0} = E_{N+1}$. Extend $\tilde \psi$ to $[0,\infty]^{N+1}$ setting $\psi(E_{N+1})=f_{i_0}$. Then $\phi=\psi Q$ and 
\[
Qx=\tilde Q\tilde x+x_{i_0}E_{N+1}\leq \tilde Q\tilde y+y_{i_0}E_{N+1}=Qy, 
\] 
thus again completing the induction step.
 
We assume in the sequel that $x_i\neq y_i$,  i.e., either $x_i<y_i$ or $x_i>y_i$, for all $i=1,\ldots,n$.
Let $I=\{i:x_i>y_i\}$ and $J=\{j:y_j>x_j\}$. Let $M_1=\max_{i\in I} x_i-y_i$ and $M_2=\max_{j\in J} y_j-x_j$. Then $M=\max (M_1,M_2)$. We break-up the rest of the proof into two cases.

\emph{Case $M_1\geq M_2$}. Using weak cancellation in 
\[
\sum_{i=1}^n x_if_i = \phi(x) \ll \phi(y)=\sum_{i=1}^n y_if_i
\]
we get
\[
\sum_{i\in I} (x_i-y_i)f_i \ll \sum_{j\in J}(y_j-x_j)f_j.
\]
Let $i_1\in I$ be such that $x_{i_1}-y_{i_1}=M_1$. From the last inequality we deduce that
\[
M_1 f_{i_1}\ll \sum_{j\in J} M_2f_j,
\]
and since $M_2\leq M_1$, we get $f_{i_1}\ll \sum_{j\in J} f_j$. By the Riesz decomposition property in $\A(C)$ (Theorem \ref{ACRiesz}), there exist $g_j,h_j\in \A(C)$, with $j\in J$, such that
\[
f_{i_1}=\sum_{j\in J} g_j\hbox{ and }f_j=g_j+h_j\hbox{ for all }j\in J.
\] 
Let $N_1=n+|J|-1$, and let us label the canonical generators of $[0,\infty]^{N_1}$ with the set $\{E_i:i=1,\ldots,n, \,i\neq i_1\}\cup\{G_j:j\in J\}$. Define $Q_1\colon [0,\infty]^{n}\to [0,\infty]^{N_1}$ as follows:
\begin{align*}
Q_1E_i &=E_i\hbox{ if }i\in I\backslash \{i_1\},\\
Q_1E_{i_1} &= \sum_{j\in J} G_j,\\
Q_1E_{j} &= E_j+G_j\hbox{ if }j\in J,
\end{align*}
and extend $Q_1$ to a Cu-cone morphism on $[0,\infty]^n$. Next, define a Cu-cone morphism $\psi_1\colon [0,\infty]^{N_1}\to \Lsc_\sigma(C)$ on the same generators as follows:
\begin{align*}
\psi_1(E_i) &= f_i,\hbox{ if }i\in I\backslash\{i_1\},\\
\psi_1(E_j) &= h_j, \hbox{ if }j\in J,\\
\psi_1(G_j) &= g_j, \hbox{ if }j\in J.
\end{align*}
It is easily checked that $\psi_1Q_1=\phi$ and that $Q_1$ maps  $[0,\infty]^n\cap \Z^n$ to  $[0,\infty]^{N_1}\cap  [0,\infty]^{N_1}$. Also,
\begin{align*}
Q_1x &=\sum_{i\in I\backslash\{i_1\}}x_iE_i + \sum_{j\in J}x_{i_1}G_j+\sum_{j\in J}x_j(E_j+G_j)\\
&=\sum_{i\neq i_1} x_iE_i + \sum_{j\in J}(x_{i_1}+x_j)G_j.
\end{align*}
Similarly,
\[
Q_1y =\sum_{i\neq i_1} y_iE_i + \sum_{j\in J}(y_{i_1}+y_j)G_j.
\]
We claim that   $\deg (\psi_1,Q_1x,Q_1y)< \deg (\phi,x,y)$. Indeed, the maximum of the differences of the coordinates ($M$ above) has not gotten larger. Moreover, the number of times that $M_1$ is attained ($n_1$ above) is smaller, since we have removed the coordinate $i_1$ and added new coordinates for which
\[
(x_{i_1}+x_j)-(y_{i_1}+y_j)=M_1 +x_j-y_j\in [0,M_1-1].
\]
By induction, the lemma holds for $(\psi_1,Q_1x,Q_1y)$. Thus, there exist Cu-morphisms 
$Q_2\colon [0,\infty]^{N_1}\to [0,\infty]^{N_2}$ and $\psi_2\colon [0,\infty]^{N_2}\to \Lsc_\sigma(C)$ such that $Q_2Q_1x\leq Q_2Q_1y$ and $\psi_1=\psi_2Q_2$. Setting $Q=Q_1Q_2$ and $\psi=\psi_2$, we get the desired result.

\emph{Case $M_2>M_1$}. This case is handled similarly to the previous case, though with a few added complications. Observe first that $M_2\geq 2$ (since $M_1\geq 1$; otherwise $x\leq y$). Choose $\epsilon>0$ such that $\phi(x)\ll (1-\epsilon)\phi(y)$. If necessary, make $\epsilon$ smaller, so that we also have 
\[
\epsilon<\min\{\frac 1{4x_i},\frac{1}{4y_j}:x_i\neq 0, y_j\neq 0\}.
\] 
Notice that this implies that 
\begin{equation}
\begin{aligned}\label{ineqsxiyi}
x_i>(1-2\epsilon)y_i&\Leftrightarrow x_i>y_i, \hbox{ for }i=1,2,\ldots,n,\\
x_i<(1-2\epsilon)y_i&\Leftrightarrow x_i<y_i, \hbox{ for }i=1,2,\ldots,n.
\end{aligned}
\end{equation}

Let $h\in \A(C)$ be such that $h+\phi(x)=(1-\epsilon)\phi(y)$, which exists by Lemma \ref{lhdll}.
Enlarge the domain of $\phi$ to $[0,\infty]^{n+1}$, labelling the new generator by $H$ ($=(0,\ldots,0,1)$),
and setting $\phi(H)=h$.  We then have $(1-2\epsilon)\phi(y)\ll h+\phi(x)$, i.e.,
\[
\sum_{i=1}^n (1-2\epsilon)y_if_i \ll h+\sum_{i=1}^n x_if_i.
\]  
Using weak cancellation and the inequalities \eqref{ineqsxiyi} we can move terms around to get
\[
\sum_{j\in J} ((1-2\epsilon)y_j-x_j)f_j \ll h+\sum_{i\in I}(x_i-(1-2\epsilon)y_i)f_i.
\]
Let $j_1\in J$ be such that $y_{j_1}-x_{j_1}=M_2$. Then
\[
((1-2\epsilon)y_{j_1}-x_{j_1})f_{j_1} \ll h +\sum_{i\in I} (x_i-(1-2\epsilon)y_i)f_i.
\]
By our choice of $\epsilon$, we have the inequalities
\[
(1-2\epsilon)y_{j_1}-x_{j_1}\geq M_2-\frac12\hbox{ and }x_{i}-(1-2\epsilon)y_i\leq M_1+\frac12\hbox{ for all }i.
\] 
Hence,
\[
(M_2-\frac12)f_{j_1}\ll h+\sum_{i\in I} (M_1+\frac12)f_i.
\]
Further, $M_1+\frac12\leq M_2-\frac{1}{2}$ (since $M_2>M_1$) and $M_2-\frac12>1$ (since $M_2\geq 2$). So
\[
f_{j_1}\ll h+\sum_{i\in I} f_i.
\]
By the Riesz decomposition property in $\A(C)$ (Theorem \ref{ACRiesz}), $f_{j_1}=h'+\sum_{i\in I}g_i$ for some  $h'\ll h$ and $g_i\ll f_i$, with $i\in I$. Let us choose $h'',h_i\in \A(C)$ such that $h=h'+h''$ and $f_i=g_i+h_i$ for all $i\in I$ (Lemma \ref{lhdll}). Label the canonical generators of the Cu-cone  $[0,\infty]^{N_1}$, where $N_1=n+|I|+1$, with the set 
\[
\{E_{j}:j=1,\dots,n,\, j\neq j_1\}\cup \{G_i:i\in I\}\cup \{H,H'\}.
\]
Define a Cu-cone morphism $Q_1\colon [0,\infty]^{n+1}\to [0,\infty]^{N_1}$  as follows:
\begin{align*}
Q_1E_j &=E_j\hbox{ for }j\in J\backslash \{j_1\},\\
Q_1E_{j_1} &= H'+\sum_{i\in I}G_i,\\
Q_1E_i &= E_i+G_i \hbox{ for }i\in I,\\
Q_1H &=H+H',
\end{align*}
Next, define a Cu-cone map $\psi_1\colon [0,\infty]^{N_1}\to \Lsc_{\sigma}(C)$ by
\begin{align*}
\psi_1 E_j &= f_j\hbox{ for }j\in J\backslash \{j_1\}\\
\psi_1 E_i &=h_i, \hbox{ for }i\in I,\\
\psi_1 G_i &= g_i,\hbox{ for }i\in I,\\ 
\psi_1 H   &=h''\hbox{ and }\psi_1 H'=h'.
\end{align*}
Now $\psi_1 Q_1E_j=f_j$ for $j\in J\backslash\{j_1\}$, and 
\[\psi_1 Q_1E_{j_1}=\psi_1\left(H'+\sum_{i\in I}G_i\right)=h'+\sum_{i\in I} g_i=f_{j_1}.\]
Also, 
\[
\psi_1 Q_1E_i=\psi_1(E_i+G_i)=h_i+g_i=f_i,\hbox{ for }i\in I.
\]
Finally, $\psi_1 Q_1H=h'+h''=h$. Thus, we have checked that $\psi_1 Q_1=\phi$. Clearly, $Q_1$ maps
integer valued vectors to integer valued vectors.

Let us examine the degree of $(\psi_1, Q_1(x+H),Q_1y)$. We have that 
\begin{align*}
Q_1(x+H)&=\sum_{j\in J\backslash\{j_1\}}x_jE_j+\sum_{i\in I}x_{j_1}G_i+x_{j_1}H'+\sum_{i\in I}x_i(E_i+G_i)+(H+H')\\
	&=\sum_{j\ne j_1}x_jE_j+\sum_{i\in I}(x_{j_1}+x_i)G_i+H+(x_{j_1}+1)H'.
\end{align*} 
Similarly, we compute that
\[
Q_1y=\sum_{j\ne j_1}y_jE_j+\sum_{i\in I}(y_{j_1}+y_i)G_i+H+y_{j_1}H'.
\]   
We claim that the $\deg(\psi_1,Q_1(x+H),Q_1y)<\deg(\phi,x,y)$. To show this we check that for the pair $(Q_1(x+H),Q_1y)$ we have that: 
\begin{enumerate}
\item
the maximum coordinates difference for the indices $i$ such that $x_i>y_i$ (number $M_1$ above) is strictly less than $M_2$, 
\item
the maximum coordinates difference  for the indices where $y_j>x_j$ is at most $M_2$, 
\item
the number of indices for which $M_2$ is attained (number $n_2$ above) has decreased relative to the pair $(x,y)$. 
\end{enumerate}
The first two points are straightforward to check. The last point follows from the fact that we have removed the coordinate $j_1$, and that for the new coordinates that we have added we have
\begin{align*}
(y_{j_1}+y_i)-(x_{j_1}+x_i)&=M_2+(y_i-x_i)\in [0,M_2-1],\\
y_{j_1}-(x_{j_1}+1)&=M_2-1<M_2.
\end{align*} 

Observe that  
\[
(\psi_1Q_1)(x+H)=h+\phi(x)=(1-\epsilon)\phi(y)\ll \phi(y)=\psi_1Q_1y.
\]
Hence, by the induction hypothesis, there exist $Q_2$ and $\psi_2$ such that $\psi_1=\psi_2Q_2$ and $Q_2Q_1(x+H)\leq Q_2Q_1y$. Then $Q=Q_2Q_1$ and $\psi=\psi_2$ are as desired, thus completing the step of the induction.
\end{proof}

\begin{theorem}\label{triangletheorem}
Let $\phi\colon [0,\infty]^n\to \Lsc_\sigma(C)$ be a Cu-morphism. Let $F\subset [0,\infty)^n$
be a finite set. Then there exist $N\in \N$ and Cu-morphisms 
\[
[0,\infty]^n \stackrel{Q}{\longrightarrow} [0,\infty]^N\stackrel{\psi}{\longrightarrow} \Lsc_\sigma(C),
\]
such that $\psi Q=\phi$, 
\[
\phi x\ll \phi y\implies Qx\ll Qy\hbox{ for all }x,y\in F,
\]
and $Q$ maps $[0,\infty]^n\cap \Z^n$ to  $[0,\infty]^N\cap \Z^{N}$.
\end{theorem}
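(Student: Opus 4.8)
The plan is to deduce the theorem from the single–pair Lemma \ref{coretrianglelemma} by iteration, after settling three separate points: reducing the finitely many relevant relations to one at a time, passing from real to integer vectors, and strengthening the conclusion $Qx\leq Qy$ produced by the core lemma to the way-below relation $Qx\ll Qy$ demanded here. Throughout I use that for $x,y\in[0,\infty)^n$ the functions $\phi x,\phi y$ lie in $\A(C)$ (finite combinations of the $f_i=\phi(E_i)\in\A(C)$), so that $\phi x\ll\phi y$ is equivalent to $\phi x\leq(1-\varepsilon)\phi y$ for some $\varepsilon>0$ (one direction is $\lhd\Rightarrow\ll$ from \cite[Proposition 5.1]{ERS}; the other because $(1-\tfrac1m)\phi y\nearrow\phi y$).

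First I would reduce to integer vectors. Using the slack $\phi x\leq(1-\varepsilon)\phi y$ together with the homogeneity of $\phi$, a routine approximation lets me choose, for each relevant pair $(x,y)\in F\times F$, rational vectors $x'\geq x$ and $y'\leq y$ with $\phi x'\ll\phi y'$, perturbing only coordinates with nonzero entry so that supports are not enlarged (this is what keeps $\phi x'$ vanishing wherever $\phi x$ does). Clearing denominators, I may assume the finite set of all these perturbed vectors lies in $\Z_+^n$. Once a single factorization $\phi=\psi Q$ with $Qx'\ll Qy'$ for all these integer pairs is produced, homogeneity undoes the scaling, and then $Qx\leq Qx'\ll Qy'\leq Qy$ yields $Qx\ll Qy$ for the original $x,y$.

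For the iteration I rely on the strengthened single-pair statement: \emph{given a Cu-cone morphism $\Phi$ and $u,v\in\Z_+^m$ with $\Phi u\ll\Phi v$, there is a factorization $\Phi=\Psi R$ with $Ru\ll Rv$ and $R$ integer-preserving.} Granting this, enumerate the integer pairs $(x_k,y_k)$ with $\phi x_k\ll\phi y_k$. Applying the statement to $(\phi,x_1,y_1)$ gives $\phi=\psi^{(1)}Q^{(1)}$ with $Q^{(1)}x_1\ll Q^{(1)}y_1$; since $\psi^{(1)}(Q^{(1)}x_2)=\phi x_2\ll\phi y_2=\psi^{(1)}(Q^{(1)}y_2)$, I apply it again to $\psi^{(1)}$ and $(Q^{(1)}x_2,Q^{(1)}y_2)$, and so on. Because Cu-cone morphisms preserve $\ll$, every relation established at an earlier stage survives composition with the later maps, so $Q=Q^{(m)}\cdots Q^{(1)}$ and $\psi=\psi^{(m)}$ work; integrality and $\psi Q=\phi$ are preserved under composition.

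It remains to prove the strengthened single-pair statement, which is the heart of the matter. Apply Lemma \ref{coretrianglelemma} to $(\Phi,u,v)$ to obtain $\Phi=\Psi_1R_1$ with $a:=R_1u\leq b:=R_1v$ in $\Z_+^{N_1}$, and set $c=b-a\in\Z_+^{N_1}$. From $\Psi_1a=\Phi u\ll\Phi v=\Psi_1a+\Psi_1c$ we get $\Psi_1a\leq(1-\varepsilon)(\Psi_1a+\Psi_1c)$, hence $\Psi_1a\leq\tfrac{1-\varepsilon}{\varepsilon}\,\Psi_1c$ on the set where $\Psi_1a<\infty$. On the set where $\Psi_1a=\infty$ I claim $\Psi_1c=\infty$ too, i.e. $\supp(\Psi_1c)\leq\supp(\Psi_1a)$; this support inequality is precisely the consequence of strong connectedness extracted in the proof of Theorem \ref{weakcancellation} (the step showing $\supp(g_m)=\supp(g_m+h_m)$ there), and it fails in non–strongly-connected cones. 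Consequently $\Psi_1a\leq K\Psi_1c$, and in fact $\Psi_1a\ll\Psi_1(Kc)$, for a large integer $K>\tfrac{1-\varepsilon}{\varepsilon}$. Feeding the integer pair $(a,Kc)$ into Lemma \ref{coretrianglelemma} gives $\Psi_1=\Psi_2R_2$ with $R_2a\leq K R_2c$ coordinatewise. Writing $a'=R_2a$, $c'=R_2c$, $b'=R_2b=a'+c'$, the inequality $a'\leq Kc'$ forces $c'_j>0$ whenever $a'_j>0$, so each coordinate satisfies $a'_j=0$ or $a'_j<b'_j$; that is, $R_2a\ll R_2b$. Taking $R=R_2R_1$ and $\Psi=\Psi_2$ completes the argument. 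The main obstacle is exactly this last upgrade to $\ll$: forcing strictness in \emph{every} coordinate, including the ``infinite'' directions where the generators $\Phi E_i$ take the value $\infty$, is what compels the appeal to strong connectedness through the support inequality $\supp(\Psi_1c)\leq\supp(\Psi_1a)$.
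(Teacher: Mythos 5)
Your proof is correct, but it diverges from the paper's at the one point that matters most: how the conclusion $Qx\le Qy$ supplied by Lemma \ref{coretrianglelemma} gets upgraded to $Qx\ll Qy$. The paper does this entirely on the domain side: using the slack in $\phi x\ll\phi y$ it picks rational $x'$ with $x\ll x'\le(1+\varepsilon)x$ and $y'$ with $(1-\varepsilon)y\le y'\ll y$, applies the core lemma once to get $Qx'\le Qy'$, and concludes $Qx\ll Qx'\le Qy'\ll Qy$ simply because the Cu-morphism $Q$ preserves $\ll$. You instead use the perturbation only to rationalize, and then prove a strengthened single-pair statement by a \emph{second} application of the core lemma, to the pair $(a,Kc)$, after establishing $\Psi_1a\ll\Psi_1(Kc)$. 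That is the step where you need the support inequality $\supp(\Psi_1c)\le\supp(\Psi_1a)$; it does follow from strong connectedness exactly as in the step of the proof of Theorem \ref{weakcancellation} you cite (take $h=\tfrac{1}{1-\delta}\Psi_1a$, $h_m=\Psi_1a$, $g_m=\Psi_1c$ in the notation there, so that the hypotheses of the lemma preceding that theorem are met), but since you are citing an argument buried inside a proof rather than a stated result, a clean write-up would extract it as a lemma: for $f,g\in\A(C)$ with $f\ll f+g$ one has $\supp(g)\le\supp(f)$. The coordinate count at the end ($a'_j\le Kc'_j$ forces $a'_j=0$ or $a'_j<b'_j$) is right, and the reduction to integer vectors and the iteration over the finitely many pairs coincide with the paper's. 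In short, both routes are valid; yours costs an extra invocation of the core lemma and an extra appeal to strong connectedness, whereas the paper's domain-side sandwich gets the way-below relation for free from functoriality.
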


\begin{proof}
We start by noting that given elements  $x=(x_i)_{i=1}^n$ and $y=(y_i)_{i=1}^n$ in $[0,\infty]^n$, we have $x\ll y$ if and only if  
$x_i<y_i$ or $x_i=y_i=0$ for all $i=1,\dots,n$. 

Suppose first that $F=\{x,y\}\subseteq [0,\infty)^n$ and that $\phi(x)\ll\phi(y)$. Choose $\varepsilon>0$ such that $(1+\varepsilon)\phi(x)\ll (1-\varepsilon)\phi(y)$. Choose $x',y'\in [0,\infty)^n\cap\Q^n$ such that $x\ll x'\le (1+\varepsilon)x$ and $(1-\varepsilon)y\le y'\ll y$. Then $\phi(x')\ll \phi(y')$. Let $m\in \N$ be such that $mx',my'\in [0,\infty)^n\cap\mathbb{Z}^n$.  By Lemma \ref{coretrianglelemma}, there exist  $Q,\psi$ such that $\phi=\psi Q$ and $Q(mx')\le Q(my')$, i.e., $Qx'\le Qy'$. Then 
\[
Qx\ll Qx'\le Qy'\ll Qy.
\]
Lemma \ref{coretrianglelemma} also guarantees that $Q$ maps integer valued vectors to integer valued vectors. Thus, $Q$ and $\psi$ are as desired.

To deal with an arbitrary finite set $F\subseteq [0,\infty)^n$, choose $x,y\in F$ such that $\phi(x)\ll \phi(y)$ and obtain $Q_1,\psi_1$ such that $\phi=\psi_1Q_1$ and $Q_1x\ll Q_1y$. Set $F_1=Q_1F$ and apply the same argument to a new pair $x',y'\in F_1$ to obtain maps $Q_2,\psi_2$. Continue inductively until all pairs have been exhausted. Set $Q=Q_k\cdots Q_1$ and $\psi=\psi_k$.  
\end{proof}

\subsection{Building the limit}

\begin{theorem}\label{inductiveCucones}
Let $C$ be an extended Choquet cone that is strongly connected and has an abundance of compact idempotents. Then $\Lsc_\sigma(C)$ is an inductive limit in the Cu-category of an inductive system of Cu-cones of the form $[0,\infty]^n$, $n\in \N$, and with Cu-morphisms that map integer valued vectors to integer valued vectors. Moreover, if $C$ is metrizable, then this inductive system can be chosen over a countable index set. 
\end{theorem}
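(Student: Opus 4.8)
The plan is to realize $\Lsc_\sigma(C)$ as an inductive limit by exhibiting a cofinal system of building blocks $[0,\infty]^{n}$ together with integer-preserving connecting Cu-morphisms, and then verifying the standard recognition criterion for inductive limits in the category of Cu-cones. In the present context that criterion reads: a compatible family of Cu-morphisms $\psi_\lambda\colon [0,\infty]^{n_\lambda}\to \Lsc_\sigma(C)$ (with $\psi_\mu Q_{\lambda\mu}=\psi_\lambda$) exhibits $\Lsc_\sigma(C)$ as $\varinjlim [0,\infty]^{n_\lambda}$ precisely when (i) every $f\in \Lsc_\sigma(C)$ is the supremum of an increasing sequence of elements of the form $\psi_\lambda(t)$, and (ii) for each $\lambda$ and all $x'\ll x$, $y$ in $[0,\infty]^{n_\lambda}$ with $\psi_\lambda(x)\le \psi_\lambda(y)$ there is $\mu\ge\lambda$ with $Q_{\lambda\mu}(x')\le Q_{\lambda\mu}(y)$. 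By Lemma \ref{Cugenerators}, the Cu-morphisms $[0,\infty]^n\to \Lsc_\sigma(C)$ are exactly those determined by a choice of $n$ functions in $\A(C)$ (the images of the canonical generators), so these are the natural building blocks.

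For the index set I would take the finite subsets of $\A(C)$, a subset $\{f_1,\dots,f_n\}$ giving the block $\psi\colon [0,\infty]^n\to \Lsc_\sigma(C)$, $\psi(E_i)=f_i$, and I would direct the system both by combining generating families (the coordinate inclusions $[0,\infty]^{n_1}\hookrightarrow [0,\infty]^{n_1+n_2}$ supply the required factorizations) and by passing to the refinements produced by the triangle lemma, Theorem \ref{triangletheorem}; the latter also keeps every connecting map integer-preserving. Property (i) is then immediate: given $f\in \Lsc_\sigma(C)$, Theorem \ref{ACsuprema} writes $f=\sup_m h_m$ with $(h_m)\subseteq \A(C)$ increasing, and each $h_m$ is a generator $\psi_\lambda(E_i)$ of some block, so $f$ is a supremum of images.

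Property (ii) is where the triangle lemma does the real work. Given $x'\ll x$ and $\psi_\lambda(x)\le \psi_\lambda(y)$, I first reduce to a pair of finite-entry vectors: choosing $p$ with $x'\le p\ll x$ gives $\psi_\lambda(p)\ll \psi_\lambda(y)$, and since $y$ is an increasing supremum of finite-entry vectors and Cu-morphisms preserve $\ll$, one can interpolate a finite-entry $q\le y$ with $\psi_\lambda(p)\ll \psi_\lambda(q)$. Now $p,q\in[0,\infty)^{n_\lambda}$, so Theorem \ref{triangletheorem} applied to $F=\{p,q\}$ yields a refinement $Q$ with $\psi Q=\psi_\lambda$ and $Q(p)\ll Q(q)$, whence $Q(x')\le Q(p)\le Q(q)\le Q(y)$, establishing (ii). Thus the recognition criterion holds and $\Lsc_\sigma(C)\cong \varinjlim [0,\infty]^{n}$ with integer-preserving connecting maps. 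For the metrizable case I would run the same argument as an explicit sequential recursion: by Theorem \ref{metrizableC} fix a countable generating set $B\subseteq \A(C)$ and build a chain $[0,\infty]^{n_1}\to[0,\infty]^{n_2}\to\cdots$ in which stage $k$ adjoins $b_k$ to the current generating family and applies the triangle lemma to a finite set of rational pairs, enumerated by a diagonal argument so that every rational pair $(p,q)$ with $\psi_k(p)\ll\psi_k(q)$ is eventually corrected; the reduction just described shows these countably many corrections suffice to verify (ii).

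The main obstacle is precisely the verification of (ii): one must ensure that \emph{every} way-below relation that becomes valid after applying $\psi_\lambda$ is reflected by the connecting maps further along the system, and this is exactly what the triangle lemma guarantees---its validity resting in turn on weak cancellation in $\Lsc_\sigma(C)$ (Theorem \ref{weakcancellation}), i.e.\ on the strong connectedness of $C$. The subsidiary technical points---interpolating finite-entry (and, in the metrizable case, rational) vectors to enter the hypotheses of Theorem \ref{triangletheorem}, handling coordinates equal to $\infty$ via the preservation of $\ll$ by Cu-morphisms, and arranging a genuinely directed (respectively countable) index set while keeping all connecting maps integer-preserving---are routine once this core mechanism is in place.
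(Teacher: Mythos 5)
Your overall strategy is the paper's: use the triangle lemma (Theorem \ref{triangletheorem}) to build an inductive system of cones $[0,\infty]^{n}$ indexed by the finite subsets of $\A(C)$, and verify the two recognition conditions for a Cu-inductive limit, with condition (i) coming from Theorem \ref{ACsuprema} and condition (ii) from the triangle lemma via interpolation of finite-entry vectors. The reductions you perform in (ii) (passing from $x'\ll x$, $\psi_\lambda(x)\le\psi_\lambda(y)$ to a finite-entry pair $p,q$ with $\psi_\lambda(p)\ll\psi_\lambda(q)$) are exactly right. However, there is a genuine gap in how you assemble the system. In your verification of (ii) you apply Theorem \ref{triangletheorem} to the pair $\{p,q\}$ \emph{after} that pair has been handed to you, producing a ``refinement $Q$'' that is not an object of the system you have already built. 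The recognition criterion requires the correcting index $\mu\ge\lambda$ and the connecting map $Q_{\lambda\mu}$ to already exist in a fixed directed system with coherent connecting maps; you cannot enlarge the system on demand without re-establishing directedness and commutativity each time. Since each application of the triangle lemma corrects only a finite set of pairs, while for each $\lambda$ there are uncountably many pairs $(p,q)$ with $\psi_\lambda(p)\ll\psi_\lambda(q)$, no single later stage can correct them all; and given two ad hoc refinements $Q_1,Q_2$ of the same block there is no obvious common refinement making the diagram commute, so ``directing the system by passing to the refinements produced by the triangle lemma'' does not by itself yield a directed system.

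The paper resolves this by front-loading the corrections: it fixes, for each $n$, an increasing sequence $(A^{(n)}_k)_k$ of finite subsets of $[0,\infty)^n$ with dense union, and defines $S_F$ recursively over the lattice of finite subsets $F$ of $\A(C)$ as a triangle-lemma refinement of the product $\prod_{G\subsetneq F}S_G$ relative to the finite set $\prod_G A^{(n_G)}_{|F|}$. Directedness and coherence are then automatic from the subset lattice, and condition (ii) is verified by approximating the given pair $(p,q)$ by a pair $(u,v)\in A^{(n_F)}_{k}$ with $x'\ll u\ll x''$ and $y'\ll v\ll y$, whose correction was already built into every $S_{F'}$ with $F\subset F'$ and $|F'|\ge k$. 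This density-and-approximation step is not merely a matter of ``entering the hypotheses of the triangle lemma''; it is the mechanism that turns uncountably many required corrections into a recursion over a directed index set, and it is the main content of the proof beyond Theorem \ref{triangletheorem}. Your sequential diagonal argument in the metrizable case is essentially this mechanism and is fine there; in the general case the same idea must be implemented via the recursive construction just described rather than dismissed as routine.
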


\begin{proof}
For each $n=1,2,\ldots$, choose an increasing sequence  $(A_k^{(n)})_{k=1}^\infty$ of finite subsets of $[0,\infty)^n$ with  dense union in $[0,\infty]^n$.

We will construct an inductive system of Cu-cones $\{S_F,\phi_{G,F}\}$, where $F,G$ range through the finite subsets of $\A(C)$, such that $S_F\cong [0,\infty]^{n_F}$ for all $F$. We also construct Cu-morphisms $\psi_{F}\colon S_F\to \Lsc_\sigma(C)$ for all $F$, finite subset of $\A(C)$, making the overall diagram commutative. We follow closely the presentation of the proof of the Effros-Shen-Handelmann theorem given in \cite{goodearl-wehrung}, adapted to the category of Cu-cones.

For each $f\in \A(C)$, define $S_{\{f\}}=[0,\infty]$  and $\psi_{\{f\}}\colon [0,\infty]\to \Lsc_\sigma(C)$ as the Cu-morphism such that $\psi_{\{f\}}(1)=f$. Fix a finite set $F\subseteq \A(C)$. Suppose that we have defined $S_G$ and $\psi_G$ for all proper subsets $G$ of  $F$.  Set  $S^F:=\prod_G S_G$, where $G$ ranges though all proper subsets of $F$. 
Define $\phi^F\colon S^F\to \Lsc_\sigma(C)$ as 
\[
\phi^F((s_G)_G)=\sum_G \psi_G(s^G).
\]
Next, we construct $Q\colon S^F\to S_F$ and $\psi\colon S_F\to \Lsc_\sigma(C)$ using Theorem \ref{triangletheorem}. Here is how: For each $G$, proper subset of $F$, let $n_G$ be such that $S_G\cong [0,\infty]^{n_G}$. Let $A=\prod_G A^{(n_G)}_{k}$, where $k=|F|$ and where $G$ ranges through all proper subsets of $F$. Then $A$ is a finite subset of $S^F$. Let us apply Theorem \ref{triangletheorem} to $\phi^F$ and the set $A$, in order to obtain maps $Q\colon S^F\to S_F\cong [0,\infty]^{n_F}$ and $\psi\colon S_F\to \Lsc_\sigma(C)$ such that $\phi^F=\psi Q$ and 
\[
\phi^F(x)\ll \phi^F(y)\Rightarrow Qx\ll Qy\hbox{ for all }x,y\in A.
\]
 Set $\psi_{F}=\psi$, and for each proper subset $G$ of $F$, define $\phi_{G,F}\colon S_G\to S_F$ as the composition of the embedding of $S_G$ in $S^F$ with the map $Q$: 
\[
S_G\hookrightarrow S^F\stackrel{Q}{\to} S_F.
\]
Observe that $\phi_{G,F}$ maps $[0,\infty]^{n_G}\cap \Z^{n_G}$ to $[0,\infty]^{n_F}\cap \Z^{n_F}$,
as both $Q$ and  $S_G\hookrightarrow S^F$ map integer valued vectors to integer valued vectors.
Continuing in this way we obtain an  inductive system $\{S_F,\phi_{G,F}\}$, indexed by the finite subsets of $\A(C)$, and maps $\psi_F\colon S_F\to \Lsc_{\sigma}(C)$ for all $F$. By construction, the overall diagram is commutative. To show that $\Lsc_\sigma(C)$ is the inductive limit in the Cu-category of this inductive system,  we must check that
\begin{enumerate}
\item
every element in $\Lsc_\sigma(C)$ is supremum of an increasing sequence contained in the  union of the ranges of the maps $\psi_F$,
\item
for each finite set $F$ (index of the system) and elements  $x',x,y\in S_F$ such that $x'\ll x$ and $\psi_{G}(x)\leq\psi_G(y)$ in $\Lsc_\sigma(C)$, there exists $F'\supset F$ such that $\phi_{F,F'}(x')\ll \phi_{F,F'}(y)$.
\end{enumerate}
Let's check the first property. By construction, if $F=\{f\}$ then $f$ is contained in the range of $\psi_F$. Examining the construction of $\psi_F$ for arbitrary $F$, it becomes clear that $F$ is contained in the range of $\psi_F$. Thus, as $F$ ranges through all finite subsets of $\A(C)$, the union of the ranges of the maps $\psi_F$ contains $\A(C)$. Moreover, by Theorem \ref{ACsuprema}, every function in $\Lsc_\sigma(C)$ is the supremum of an increasing sequence in $\A(C)$.

Suppose that $x',x,y\in S_F$ are such that $\psi_F(x)\leq \psi_F(y)$ and  $x'\ll x$. 
Then $x'\in [0,\infty)^{n_F}$ and $\psi_F(x')\ll \psi_F(y)$. Choose $y'\ll y$ and $x'\ll x''\ll x$ such that $\psi_F(x'')\ll \psi_F(y')$. Next, choose $v,w\in A_k^{(n_F)}$ for some $k$, such that $x'\ll u \ll x''$ and $y'\ll v \ll y$. Observe then that $\psi_F(u)\ll \psi_F(v)$. Let $F'\subset \A(C)$ be a finite set such that $F\subset F'$ and $|F'|\geq k$. Then, by our construction of the inductive system, we have that $\phi_{F,F'}(u)\ll \phi_{F,F'}(v)$. This implies that $\phi_{F,F'}(x')\ll \phi_{F,F'}(y)$, thus proving the second property of an inductive limit.   

Let us address the second part of the theorem. Suppose that $C$ is metrizable. By Theorem \ref{metrizableC}, there exists a countable set $B\subseteq \A(C)$ such that every function in $\Lsc_\sigma(C)$ is the supremum of an increasing sequence
in $B$. The construction of the inductive limit for $\Lsc_\sigma(C)$ in the preceding paragraphs can be repeated mutatis mutandis, letting the index set of the inductive limit be the set of finite subsets of $B$, rather than the finite subsets of $\A(C)$. The resulting inductive limit is thus indexed by a countable set. 
\end{proof}

We are now ready to proof Theorem \ref{mainchar} from the introduction.

%

\begin{proof}[Proof of Theorem \ref{mainchar}]
(i)$\Rightarrow$(iv): An AF C*-algebra has real rank zero, stable rank one, and is exact (these properties hold for finite dimensional C*-algebras and are passed on to their inductive limits). Thus, (i) implies (iv) by Proposition \ref{cstarECC}.


(iv)$\Rightarrow$(iii): Suppose that we have (iv). By Theorem \ref{inductiveCucones}, $\Lsc_{\sigma}(C)$ is an inductive limit in the Cu-category of Cu-cones of the form $[0,\infty]^n$, with $n\in \N$. We have $F([0,\infty]^n)\cong [0,\infty]^n$
 via the map
 \[
 F([0,\infty]^n)\ni \lambda\mapsto (\lambda(E_1),\ldots,\lambda(E_n))\in [0,\infty]^n,
 \] 
 where $E_1,\ldots,E_n$ are the canonical generators of $[0,\infty]^n$.
Applying the functor $F(\cdot)$ to the inductive system with limit $\Lsc_\sigma(C)$ we obtain a projective system in the category of extended Choquet cones where each cone is isomorphic to $[0,\infty]^n$ for some $n$. By the continuity of the functor $F(\cdot)$ (\cite[Theorem 4.8]{ERS}), and the fact that $F(\Lsc_\sigma(C))\cong C$ (Theorem \ref{dualitythm}), we get (iii).

(iii)$\Rightarrow$(ii): Suppose that we have (iii). Say  $C=\varprojlim_{i\in I} ([0,\infty]^{n_i}, \alpha_{i,j})$. Observe that $\alpha_{i,j}$ maps $[0,\infty)^{n_i}$ to $[0,\infty)^{n_j}$. Indeed, the support idempotent of an element in $[0,\infty)^{n_i}$ is 0. By continuity of $\alpha_{i,j}$, the same holds for the image of these elements; thus, they belong to $[0,\infty)^{n_j}$. It follows then that $\alpha_{i,j}$ is given by multiplication by a matrix $M_{i,j}$ with non-negative finite entries: $\alpha_{i,j}(v)=M_{i,j}v$ for all $v\in [0,\infty]^{n_i}$   (in $M_{i,j}v$ we regard  $v$  as a column vector and use the rule $0\cdot \infty =0$). The transpose matrix $M_{i,j}^t$ can then be regarded as a map from $\R^{n_j}$ to $\R^{n_i}$. Let us form an inductive system of dimension groups whose objects are  $\R^{n_i}$, endowed with the coordinatewise order, with $i\in I$, and with maps $M_{i,j}^t\colon \R^{n_j}\to \R^{n_i}$. This inductive system of dimension groups  gives rise to  the original system after applying the functor $\Hom(\,\cdot\,,[0,\infty])$ to it, and making the isomorphism identifications $\mathrm{Hom}(\R_+^{n_i},[0,\infty])\cong [0,\infty]^{n_i}$. Let $G$ be its limit in the category of dimension groups ($G$ is in fact a vector space). By the continuity of the functor  $\Hom(\,\cdot\,,[0,\infty])$, we have $\Hom(G_+,[0,\infty])\cong C$.  Thus, (iii) implies (ii).

(ii)$\Rightarrow$(i): By Elliott's theorem, there exists an AF C*-algebra $A$ whose Murray-von Neumann monoid of projections $V(A)$ is isomorphic to $G_+$. The result now follows from the fact, well known to experts, that  $T(A)\cong \mathrm{Hom}(V(A),[0,\infty])$ for an AF  $A$ (where $\mathrm{Hom}(V(A),[0,\infty])$ denotes the cone of monoid morphisms). Let us sketch a proof of this fact here:
Since AF C*-algebras are exact, we have by Haagerup's theorem that 2-quasitraces on $A$, and on the ideals of $A$, are traces. We apply here the version due to Blanchard and Kirchberg that includes densely finite lower semicontinuous 2-quasitraces; see \cite[Remark 2.29 (i)]{blanchard-kirchberg}. Thus, $T(A)=QT(A)$, where $QT(A)$ denotes the cone of lower semicontinuous $[0,\infty]$-valued 2-quasitraces on $A$. Further, by \cite[Theorem 4.4]{ERS}, $QT(A)\cong F(\Cu(A))$ for any C*-algebra $A$. Thus, we must show that $F(\Cu(A))\cong \mathrm{Hom}(V(A),[0,\infty])$ when $A$ is an AF C*-algebra. Let $\Cu_c(A)$ denote the submonoid of $\Cu(A)$ of compact elements, i.e., of elements $e\in \Cu(A)$ such that $e\ll e$. By \cite[Theorem 3.5]{brown-ciuperca} of Brown and Ciuperca, for stably finite $A$ the map from $V(A)$ to $\Cu(A)$ assigning to a Murray-von Neumann class $[p]_{\mathrm{MvN}}$ the Cuntz class $[p]_{\Cu}\in \Cu(A)$ is a monoid isomorphism with $\Cu_c(A)$. This holds in particular for $A$ AF.  Thus, we must show that $F(\Cu(A))\cong \mathrm{Hom}(\Cu_c(A),[0,\infty])$. This isomorphism is given by the restriction map. Indeed, since $A$ has real rank zero and stable rank one, every element of $\Cu(A)$ is supremum of an increasing sequence of compact elements (\cite[Corollary 5]{CEI}). This shows that $\lambda\mapsto \lambda|_{\Cu_c(A)}$ is injective. To prove surjectivity, suppose that we have a monoid morphism $\tau\colon \Cu_c(A)\to [0,\infty]$. Define
\[
\lambda(x)=\sup \{\tau(e):e\leq x,\,  e\in \Cu_c(A)\}.
\]
Then $\lambda$ is readily shown to be a functional on $\Cu(A)$ that extends $\tau$. Finally, from the definition of the topology on $F(\Cu(A))$ it is evident that a convergent net $(\lambda_i)_i$ in $F(\Cu(A))$ converges pointwise on compact elements of $\Cu(A)$. This shows that the map $\lambda\mapsto \lambda|_{\Cu_c(A)}$ is continuous. Since it is a bijection between compact Hausdorff spaces, its inverse is also continuous. In summary, we have the following chain of extended Choquet cones isomorphisms when $A$ is AF:
\[
T(A)=QT(A)\cong F(\Cu(A))\cong \mathrm{Hom}(\Cu_c(A),[0,\infty])\cong \mathrm{Hom}(V(A),[0,\infty]).  
\]

Finally, suppose that $C$ is metrizable and satisfies (iv). Then, in the proof of (iv)$\Rightarrow$(iii) above,
Theorem \ref{inductiveCucones} allows us to start with an inductive limit for $\Lsc_\sigma(C)$ over a countable index set. Applying the functor $F(\cdot)$, we get a projective limit for $C$ over a countable index set.  Moreover, the Cu-morphisms in the inductive system of Theorem \ref{inductiveCucones}
map integer valued vectors to integer valued vectors. Thus, the matrices $M_{i,j}$ implementing these morphisms have nonnegative integer entries. Thus,
in the proof of (iii)$\Rightarrow$(ii) we start with $C=\varprojlim_{i\in I} ([0,\infty]^{n_i}, \alpha_{i,j})$, where $\alpha_{i,j}$ is implemented by a matrix with nonnegative integer entries. We can thus construct an inductive system $(\Z^{n_i}, M_{i,j})_{i,j\in I}$, in the category of dimension groups, whose limit is  a countable dimension group $G$ such that $\Hom(G_+,[0,\infty])\cong C$, as desired.
\end{proof}

\section{Finitely generated cones}\label{fingen}
 A cone $C$ is called finitely generated if there exists a finite set $X\subseteq C$
such that for every $x\in C$ we have $x=\sum_{i=1}^n \alpha_ix_i$ for some $\alpha_i\in (0,\infty)$ and $x_i\in X$. 
In this section we give a direct construction of an ordered vector space (over $\R$) $(V,V^+)$ with the Riesz property 
and such that $\mathrm{Hom}(V^+,[0,\infty])$ is isomorphic to a given finitely generated, strongly connected, extended Choquet cone $C$. Here $\mathrm{Hom}(V^+,[0,\infty])$ denotes the monoid morphisms from $V^+$ to $[0,\infty]$. These maps are automatically homogeneous with respect to scalar multiplication; thus, they are also cone morphisms.

\begin{lemma}\label{fingenCw}
Let $C$ be a finitely generated extended Choquet cone. Then $\Idem(C)$ is finite and for each $w\in \Idem(C)$ the sub-cone $C_w$ is either isomorphic to $\{0\}$ or to $[0,\infty)^{n_w}$ for some $n_w\in \N$. (Recall that we have defined $C_w=\{x\in C:\s(x)=w\}$.) 
\end{lemma}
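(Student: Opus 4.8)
The plan is to first show that $\Idem(C)$ is finite by exploiting the additivity of the support map, and then to analyze each $C_w$ as a finitely generated cancellative lattice cone, identifying it through the structure theory of finite-dimensional Riesz spaces. The first point to record is that the support map $x\mapsto \s(x)$ is additive and invariant under positive scalar multiplication: additivity is exactly the computation $\s\big(\tfrac{x_1+x_2}{2}\big)=\s(x_1)+\s(x_2)$ carried out in the proof of Lemma \ref{supportmax}, and scalar invariance follows from $\s(x)=\lim_n\tfrac1n x$ (Lemma \ref{supportlemma}(i)). Now fix a finite generating set $X=\{x_1,\dots,x_m\}$. Any $x\in C$ has the form $x=\sum_i\alpha_i x_i$, so $\s(x)=\sum_i\s(x_i)=\bigvee_i\s(x_i)$, using that the sum of idempotents equals their join. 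In particular every idempotent $w$, which satisfies $w=\s(w)$, is the join of a subset of the finite set $\{\s(x_1),\dots,\s(x_m)\}$; since there are only finitely many such subsets, $\Idem(C)$ is finite.

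Next I would fix $w\in\Idem(C)$ and show $C_w$ is finitely generated. Because $\Idem(C)$ is finite, every idempotent is compact in the opposite order (a decreasing net valued in a finite lattice is eventually equal to its infimum), so Theorem \ref{compactbase} applies: $C_w$ is cancellative, embeds in the locally convex space $V_w=C_w-C_w$, and, unless $C_w=\{w\}$, has a compact base. Let $X_w=\{x_i\in X:\s(x_i)\le w\}$ and set $g_i=x_i+w$ for $x_i\in X_w$; each $g_i$ lies in $C_w$ since $\s(x_i+w)=\s(x_i)\vee w=w$. If $y\in C_w$, then writing $y=\sum_i\alpha_i x_i$ forces $\s(x_i)\le\s(y)=w$ for every index occurring, so all these $x_i$ lie in $X_w$, and then $y=y+w=\sum_i\alpha_i(x_i+w)=\sum_i\alpha_i g_i$, using $(\sum_i\alpha_i)w=w$. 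Thus $C_w$ is generated by the finite set $\{g_i\}$, so $V_w$ is finite-dimensional and the image of $C_w$ in $V_w$ is the polyhedral cone generated by the $g_i$.

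Finally I would identify $C_w$. The cone $C_w$ is cancellative (Lemma \ref{supportlemma}(ii)) and a sublattice of $C$ over which addition distributes (Definition \ref{defExtCho}(i) together with the cancellative-subcone discussion), so $V_w$ is a finite-dimensional lattice-ordered vector space with positive cone $C_w$: for $z=a-b$ with $a,b\in C_w$ one has $z\vee 0=(a\vee b)-b$, showing all positive parts exist, and $C_w$ is pointed because its least element $w$ (which is the additive identity, hence $w\le x$ for all $x\in C_w$) forces $x+y=w\Rightarrow x=y=w$. As a finitely generated convex cone, $C_w$ is polyhedral, hence closed in the unique Euclidean topology of $V_w$ (equivalently it is the cone over the compact base supplied by Theorem \ref{compactbase}); a closed, pointed, generating cone induces an Archimedean order. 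Therefore $V_w$ is a finite-dimensional Archimedean Riesz space, and by the structure theorem for such spaces $V_w\cong\R^{n_w}$ with positive cone $[0,\infty)^{n_w}$, giving $C_w\cong[0,\infty)^{n_w}$ (with $n_w=0$, i.e.\ $C_w=\{w\}$, in the degenerate case).

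The main obstacle is this last identification. Being a lattice is by itself insufficient in finite dimensions—the lexicographic cone on $\R^2$ is a pointed lattice cone not isomorphic to $[0,\infty)^2$—so the essential input is that finite generation makes $C_w$ closed (Archimedean), which is precisely what excludes such non-simplicial examples and lets the Riesz structure theorem apply.
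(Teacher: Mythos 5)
Your proof is correct, and its first two stages coincide with the paper's: the finiteness of $\Idem(C)$ is obtained in both cases by scaling a representation $w=\sum_i\alpha_i x_i$ down to $w=\sum_i\s(x_i)$ (a sum, equivalently a join, of support idempotents of the generators), and the finite generation of $C_w$ by the elements $x_i+w$ with $\s(x_i)\le w$ is the same computation the paper performs with its set $Z_w$. Where you diverge is the final identification of $C_w$. The paper passes to the compact base $K$ of $C_w$ supplied by Theorem \ref{compactbase}, observes that $K$ is a Choquet simplex (the cone over it being a lattice) generated by finitely many points, hence a finite-dimensional simplex, so $C_w\cong[0,\infty)^{n_w}$. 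You instead make $V_w$ into a finite-dimensional Riesz space (checking $z\vee 0=(a\vee b)-b$ via distributivity of $+$ over $\vee$), use finite generation to conclude the cone is polyhedral, hence closed, hence Archimedean, and invoke the structure theorem for finite-dimensional Archimedean Riesz spaces. Both routes rest on the same two inputs---the lattice structure and finite generation---but package them through different classical theorems; your version has the merit of making explicit \emph{why} finite generation is indispensable (it supplies the Archimedean property, excluding lexicographic-type lattice cones), whereas the paper's version leans on machinery (the compact base, Choquet's characterization of simplices) that it has already set up and will reuse. One small point worth recording in your write-up: for $a,b\in C_w$ the order induced on $C_w$ by differences within $C_w$ agrees with the algebraic order of $C$ restricted to $C_w$, because $b=a+c$ with $c\in C$ forces $\s(c)\le\s(b)=w$ and hence $b=a+(c+w)$ with $c+w\in C_w$; this is needed so that the lattice you compute in $V_w$ is the one inherited from $C$.
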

\begin{proof}
Let $Z$ be a finite set that generates $C$. Let $w\in C$ be an idempotent,  and write $w=\sum_{i=1}^n \alpha_ix_i$, with $x_i\in Z$ and $\alpha_i\in (0,\infty)$. Multiplying both sides  by a scalar $\delta>0$ and passing to the limit as $\delta\to 0$, we get that $w$ is the sum of support idempotents of elements in $Z$. It follows that $\Idem(C)$ is finite.

Next, let $w\in \Idem(C)$. Define $Z_w=\{x+w:x\in Z\hbox{ and }\s(x)\leq w\}$, which is a finite subset of $C_w$. We claim that $Z_w$ generates $C_w$ as a cone. Indeed, let $x\in C_w$ and write $x=\sum_{i=1}^n \alpha_i x_i$, with $x_i\in Z$ and $\alpha_i\in (0,\infty)$. Adding $w$ on both sides we get $x=\sum_{i=1}^n \alpha_i(x_i+w)$. Since  $\s(x_i)\leq \s(x)=w$, the elements $x_i+w$ are in $Z_w$. If $Z_w=\{w\}$ then $C_w$ is isomorphic to $\{0\}$. Suppose that $Z_w\neq \{w\}$. Since $w$ is a compact idempotent, $C_w$ has a compact base $K$ which is a Choquet simplex (Theorem \ref{compactbase}). Further, $K$ is finitely generated (by the set  $(0,\infty)\cdot Z_w\cap K$). Hence, $K$ has finitely many extreme points, which in turn implies that $C_w\cong [0,\infty)^{n_w}$ for some $n_w\in \N$.  
\end{proof}

\emph{For the remainder of this section we assume that $C$ is a finitely generated, strongly connected,  extended Choquet cone.} Thus, each idempotent $w\in \Idem(C)$ is compact and, by strong connectedness,  $C_w\neq \{w\}$ for all $w\neq \infty$ (here $\infty$ denotes the largest element in $C$). 

Let $w\in \Idem(C)$ and $x\in C_w$. If $z\in C$ is such that $z+w=x$, we call $z$ and extension of $x$. The set of extensions of $x$ is downward directed: if $z_1$ and $z_2$ are extensions of $x$, then so is $z_1\wedge z_2$. Consider the element $\tilde x=\inf\{z\in C:z+w=x\}$. By the continuity of addition, $\tilde x$ is also an extension of $x$, which we call the minimum extension. 

\begin{lemma}\label{irreducibles}
Let $w \in \Idem(C)$. Let $x\in C_w\backslash\{w\}$ be an element generating an extreme ray in $C_w$, and let $\tilde x$ denote the minimum extension of $x$. 

\begin{enumerate}[\rm (i)]
\item
$\tilde x$ generates an extreme ray in $C_{\s(\tilde x)}$.

\item
If $y,z\in C$ are such that $y+z=\tilde x$, then either $y\leq z$ or $z\leq y$.
\end{enumerate}
\end{lemma}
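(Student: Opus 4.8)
The plan is to prove a single statement that yields both parts at once: for any $y,z\in C$ with $y+z=\tilde x$, the elements $y$ and $z$ are comparable, and moreover if both lie in $C_v$ (where $v:=\s(\tilde x)$) then each is a scalar multiple of $\tilde x$. First I would record the basic structural facts. Since $\tilde x\le x$ and the support map is monotone with $\s(y+z)=\s(y)\vee\s(z)$ (both immediate from the formula $\s(\cdot)=\lim_n\tfrac1n(\cdot)$ in Lemma \ref{supportlemma}(i)), we get $v=\s(\tilde x)\le\s(x)=w$, so $v+w=w$, and $\tilde x\in C_v$ with $\tilde x\ne v$ (otherwise $x=\tilde x+w=w$, contradicting $x\in C_w\setminus\{w\}$).

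The key auxiliary observation, which is what lets me cancel the large idempotent $w$, is that minimum extensions commute with scalars: $\widetilde{\alpha x}=\alpha\tilde x$ for every $\alpha\in(0,\infty)$. Indeed $\alpha\tilde x+w=\alpha\tilde x+\alpha w=\alpha(\tilde x+w)=\alpha x$, so $\alpha\tilde x$ is an extension of $\alpha x$; and if $u+w=\alpha x$ then $\tfrac1\alpha u+w=x$ forces $\tfrac1\alpha u\ge\tilde x$, i.e.\ $u\ge\alpha\tilde x$, so $\alpha\tilde x$ is the minimum one.

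Given a decomposition $y+z=\tilde x$, I would add $w$ to obtain $(y+w)+(z+w)=\tilde x+w=x$ (using $2w=w$); since $\s(y),\s(z)\le v\le w$, both summands lie in $C_w$. As $x$ generates an extreme ray of $C_w$, and $C_w$ is cancellative with compact base and isomorphic to some $[0,\infty)^{n_w}$ (Lemma \ref{fingenCw}, Theorem \ref{compactbase}), extremality gives $y+w=\alpha x$ and $z+w=\beta x$ with $\alpha,\beta\ge 0$ and $\alpha+\beta=1$. In the degenerate case $\alpha=0$ one has $y+w=w$, so $y\le w$, while $z+w=x$ makes $z$ an extension of $x$; minimality and $z\le\tilde x$ force $z=\tilde x$, whence $y\le\s(\tilde x)=v\le z$, so $y\le z$ (and if $y\in C_v$ then $\s(y)=v\le y\le v$ gives $y=v$). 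In the main case $\alpha,\beta>0$, the relations $y+w=\alpha x$ and $z+w=\beta x$ say that $y$ and $z$ are extensions of $\alpha x$ and $\beta x$, so the auxiliary observation yields $y\ge\alpha\tilde x$ and $z\ge\beta\tilde x$. Writing $y=\alpha\tilde x+p$ and $z=\beta\tilde x+q$ and summing gives $\tilde x=(\alpha+\beta)\tilde x+p+q=\tilde x+p+q$, hence $p+q\le\s(\tilde x)=v$; since then $p\le v=\s(\alpha\tilde x)$, $p$ is absorbed by $\alpha\tilde x$, giving $y=\alpha\tilde x$, and likewise $z=\beta\tilde x$, which are comparable.

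This simultaneously proves (ii) in all cases, and proves (i): for a decomposition $\tilde x=y+z$ with $y,z\in C_v$, the degenerate case yields $\{y,z\}=\{v,\tilde x\}$ and the main case yields $y=\alpha\tilde x$, $z=\beta\tilde x$, so in either case $y$ and $z$ lie on the ray through $\tilde x$; thus $\tilde x$ generates an extreme ray of $C_v$. The one genuine difficulty is the cancellation of $w$: the naive route through Lemma \ref{supportlemma}(ii) is useless because $\s(w)=w$ leaves the inequality unchanged, and it is precisely the identity $\widetilde{\alpha x}=\alpha\tilde x$ together with the support-absorption argument that replaces it.
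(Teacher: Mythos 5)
Your proof is correct. It opens exactly as the paper's does: add $w$ to the decomposition $y+z=\tilde x$, invoke extremality of $x$ in $C_w$ to get $y+w=\alpha x$ and $z+w=\beta x$ with $\alpha+\beta=1$, and use minimality of the extension to conclude $y\geq\alpha\tilde x$ and $z\geq\beta\tilde x$ (your observation that $\widetilde{\alpha x}=\alpha\tilde x$ is just a cleaner packaging of the paper's step ``$y/\alpha$ is an extension of $x$, hence $\alpha\tilde x\leq y$''). Where you genuinely diverge is the finish. The paper closes the two parts separately: for (i) it cancels $\alpha\tilde x$ inside the cancellative cone $C_v$ (which is why it needs the hypothesis $y,z\in C_v$ there), and for (ii) it normalizes to $\alpha\leq\tfrac12\leq\beta$ and applies Lemma \ref{supportlemma}(ii) to get only comparability. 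You instead write $y=\alpha\tilde x+p$, $z=\beta\tilde x+q$, observe that $p+q$ is absorbed by $\tilde x$ and hence $p,q\leq\s(\tilde x)=\s(\alpha\tilde x)$, and conclude by absorption that $y=\alpha\tilde x$ and $z=\beta\tilde x$ outright. This single argument needs no cancellation, handles (i) and (ii) simultaneously, and proves something strictly stronger than the paper's (ii): in the non-degenerate case an arbitrary decomposition of $\tilde x$ in $C$ (not just in $C_v$) consists of scalar multiples of $\tilde x$, from which comparability is immediate. The degenerate case $\alpha=0$ is handled correctly in both treatments. The only cost of your route is the small extra lemma on scalar multiples of minimum extensions, which you prove correctly; everything checks out against the definitions of $\s(\cdot)$ and the algebraic order.
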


\begin{proof}
Set $v=\s(\tilde x)$.

(i) Let $y,z\in C_v$ be such that $y+z=\tilde x$. Adding $w$ on both sides we get  $(y+w)+(z+w)=x$. Since 
$y+w,z+w\in C_w$, and $x$ generates an extreme ray in $C_w$, both $y+w$ and $z+w$ are either positive scalar multiples of $x$ or equal to $w$. Assume that $y+w=w$ and $z+w=x$. The latter says that $z$ is an extension of $x$. Hence $y+z=\tilde x\leq z$ in $C_v$. By cancellation in $C_v$ (Lemma \ref{supportlemma}), we get $y=v$ and $z=\tilde x$. Suppose on the other hand that $y+w=\alpha x$ and $z+w=\beta x$ for positive scalars $\alpha,\beta$ such that $\alpha+\beta=1$. Then $y/\alpha$ and $z/\beta$ are extensions of $x$. We deduce that  $\alpha \tilde x\leq y$ and $\beta \tilde x\leq z$. Hence,
\[
\alpha \tilde x + z \leq y+z=\tilde x=\alpha \tilde x+\beta \tilde x.
\]  
By cancellation in $C_v$, $z\leq \beta\tilde x$, and so $z=\beta\tilde x$. Similarly, $y=\alpha\tilde x$. Thus, $\tilde x$ generates an extreme ray in $C_v$.

(ii) The argument is similar to the one used in (ii). After arriving at $y+w=\alpha x$ and $z+w=\beta x$, we assume without loss of generality that $\alpha\leq \frac 12\leq \beta$. Using again that $\tilde x$ is the minimum extension of $x$, we get $z\geq \tilde x/2\geq y/2+z/2$, and applying Lemma \ref{supportlemma} (ii), we arrive at $z/2\geq y/2$.
\end{proof}

\begin{remark}
The property of $\tilde x$ in Lemma \ref{irreducibles} (ii) says that $\tilde x$ is an irreducible element of the cone $C$ in the
sense defined by Thiel in \cite{thiel}.
\end{remark}

Next, we construct a suitable set of generators of $C$.  For each $w\in \Idem(C)$, let $X_w$ denote the set of minimal extensions of  all elements $x\in C_w\backslash\{w\}$  that generate an extreme ray in $C_w$. Consider the set $\bigcup_{w\in \Idem(C)} X_w$, which is closed under scalar multiplication. We form a set $X$ by picking a representative from each ray in $\bigcup_{w\in \Idem(C)} X_w$.  

\begin{proposition}\label{Xrepresentation}
Let $X\subseteq C$ be as described in the paragraph above. Each $y\in C$ has a unique representation of the form
\[
y=\sum_{i=1}^n \alpha_ix_i + w,
\]
where $x_i\in X$ and $\alpha_i\in (0,\infty)$ for all $i$, and $w\in \Idem(C)$ is such that
$\s(x_i)\leq w$ but $x_i\nleq w$ for all $i$.
\end{proposition}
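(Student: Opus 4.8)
The plan is to treat existence and uniqueness separately, in both cases descending to the cancellative cone $C_{\s(y)}$, which by Lemma \ref{fingenCw} is isomorphic to $[0,\infty)^{n}$ and hence enjoys unique decompositions into extreme rays. For \emph{existence}, I would set $w=\s(y)$, so $y\in C_w$, and write the unique extreme-ray decomposition $y=\sum_j\beta_je_j$ in $C_w\cong[0,\infty)^{n_w}$, where $\beta_j>0$ and $e_1,\dots,e_m\in C_w\setminus\{w\}$ generate distinct extreme rays. Replacing each $e_j$ by its minimal extension $\tilde e_j\in X_w$ (so $\tilde e_j+w=e_j$) and then by the chosen ray-representative $x_j\in X$, say $\tilde e_j=\gamma_jx_j$, I get
\[
y=\sum_j\beta_je_j=\sum_j\beta_j(\tilde e_j+w)=\sum_j\beta_j\tilde e_j+w=\sum_j(\beta_j\gamma_j)\,x_j+w,
\]
using $t\cdot w=w$ and $w+w=w$. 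Setting $\alpha_j=\beta_j\gamma_j$ yields the required form: $\s(x_j)=\s(\tilde e_j)\le w$ since $\tilde e_j\le e_j\in C_w$, while $x_j\nleq w$ because $x_j\le w$ would force $e_j=\tilde e_j+w=w$.

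For \emph{uniqueness}, suppose $y=\sum_{i=1}^n\alpha_ix_i+w$ is any representation as stated, with the $x_i$ pairwise distinct. First $w$ is forced to be $\s(y)$: since each $\s(x_i)\le w$ is absorbed by $w$, Lemma \ref{supportlemma}(i) gives
\[
\s(y)=\lim_k\tfrac1ky=\sum_i\s(x_i)+w=w.
\]
So $y\in C_w$, and writing $f_i=x_i+w\in C_w\setminus\{w\}$ we have $y=\sum_i\alpha_if_i$ in $C_w$. The next step is to recover each $x_i$ from $f_i$ as its minimal extension. Every $x\in X$ is \emph{reduced} (equal to the minimal extension of itself in $C_{u_i}$, $u_i=\s(x_i)$, since a strictly smaller extension would contradict minimality of $x$ inside the $X_{w_0}$ it came from) and generates an extreme ray of $C_{u_i}$ by Lemma \ref{irreducibles}(i). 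Letting $g=\tilde f_i\le x_i$ be the minimal extension of $f_i$, the element $z:=g+u_i$ lies in $C_{u_i}$, satisfies $z\le x_i$ and $z+w=f_i=x_i+w$; extremality of $x_i$ in $C_{u_i}\cong[0,\infty)^{n_{u_i}}$ forces $z=\lambda x_i$, and then $z+w=x_i+w$ gives $\lambda=1$ by cancellation in $C_w$. Thus $g+u_i=x_i$, and reducedness of $x_i$ yields $g=x_i$, i.e. $x_i=\tilde f_i$.

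The hard part will be showing that each $f_i=x_i+w$ generates an \emph{extreme} ray of $C_w$, so that $y=\sum_i\alpha_if_i$ is literally the coordinate decomposition of $y$ in $C_w\cong[0,\infty)^{n_w}$; this is the statement that ``adding $w$'' carries the extreme ray of $x_i$ in $C_{u_i}$ to an extreme ray of $C_w$, which is the converse direction to Lemma \ref{irreducibles}(i) and does not follow formally. I would establish it using Riesz decomposition in $C$ (available because $C$, being finitely generated with a finite—hence algebraic—idempotent lattice and strongly connected, is a projective limit of cones $[0,\infty]^N$ by Theorem \ref{mainchar}, and the decomposition property passes to projective limits of compact cones by a compactness argument). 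Concretely: if $f_i$ were not extreme, split $f_i=p+q$ in $C_w$ with $p$ on a single extreme ray and $q$ omitting that ray; from $x_i\le f_i=p+q$, Riesz decomposition gives $x_i=p_0+q_0$ with $p_0\le p$, $q_0\le q$, and after absorbing $u_i$ into $p,q\in C_w$ one may take $p_0,q_0\in C_{u_i}$ with $p_0+q_0=x_i$, whence $p_0=sx_i$, $q_0=tx_i$, $s+t=1$. If $s>0$ then $p=p+w\ge p_0+w=sf_i$, impossible since $p$ omits a ray on which $f_i$ is supported; and $s=0$ gives $q\ge x_i$, hence $q\ge f_i$ and $p=w$, a contradiction. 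With extremality in hand, distinct $x_i$ give distinct rays $[f_i]$ (else $x_i=\tilde f_i$ and $x_j=\tilde f_j$ would share a ray of $X$, forcing $x_i=x_j$), so $y=\sum_i\alpha_if_i$ is the unique decomposition of $y$ into distinct extreme rays of $C_w$; this pins down the rays $[f_i]$ and the vectors $\alpha_if_i$, hence $x_i=\tilde f_i$ and then $\alpha_i$ by cancellation, completing uniqueness.
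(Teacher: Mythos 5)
Your proof is correct and follows the same skeleton as the paper's: existence by decomposing $y$ into extreme rays of $C_{\s(y)}\cong[0,\infty)^{n}$ and replacing each ray generator by the minimal extension lying in $X$; uniqueness by forcing $w=\s(y)$ and reducing to the uniqueness of extreme-ray decompositions in $C_w$. The genuine difference is that you actually prove the two facts the paper's uniqueness argument merely asserts: that each $x_i+w$ generates an extreme ray of $C_w$, and that $x_i$ is recovered from $x_i+w$ as its minimal extension (your ``reducedness'' argument). Both arguments are sound, and the extremality step is a real gap-fill, not a detour. The one place where your route is heavier than necessary is the justification of Riesz decomposition in $C$: you obtain it by realizing $C$ as a projective limit of cones $[0,\infty]^N$ via Theorem \ref{mainchar}, which is logically admissible (that theorem is proved before Section \ref{fingen}) but undercuts the section's stated aim of giving a construction independent of the main theorem. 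In fact Riesz decomposition holds in any extended Choquet cone by a two-line argument from distributivity of addition over $\wedge$: if $x\le y_1+y_2$, set $x_1=x\wedge y_1$; then $x_1+y_2=(x+y_2)\wedge(y_1+y_2)\ge x$, so there is $z_0$ with $x_1+z_0=x$, and $x_2:=z_0\wedge y_2$ satisfies $x_1+x_2=(x_1+z_0)\wedge(x_1+y_2)=x\wedge(x_1+y_2)=x$. Substituting this for the projective-limit appeal makes your proof complete, self-contained, and in fact more detailed than the one in the paper.
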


\begin{proof}
Let $y\in C$, and set $w=\s(y)$. If $y=w$ then its representation is simply $y=w$. Suppose that $y\neq w$. In $C_w$, express $y$ as a sum of elements that lie in extreme rays (Lemma \ref{fingenCw}). By the construction of $X$, these elements have the form $\alpha_i(x_i+w)$, with $x_i\in X$ and $\alpha_i\in (0,\infty)$.  We thus have that 
\[
y=\sum_{i=1}^n\alpha_i(x_i+w) =\sum_{i=1}^n \alpha_i x_i + w.
\] 
We have $x_i+w\in C_w\backslash\{w\}$ for all $i$; equivalently, $\s(x_i)\leq w$ and $x_i\nleq w$ for all $i$. Thus, this is the desired representation.

To prove uniqueness of the representation, suppose that 
\[
y=\sum_{i\in I} \alpha_i x_i + w=\sum_{j\in J} \beta_j x_j + w'.  
\]
Since $\s(x_i)\leq w$ for all $i$, the support of $y$ is $w$. Thus, $w=w'$. We can now rewrite the equation above as  
\[
y=\sum_{i\in I} \alpha_i (x_i + w)=\sum_{j\in J} \beta_j (x_j + w).  
\]
This equation occurs in $C_w\cong [0,\infty)^{n_w}$. Further, $x_i+w$ and $x_j+w$ generate extreme rays of $C_w$ for all $i,j$. It follows that $I=J$ and that the two representations are the same up to relabeling of the terms. 
\end{proof}

\subsection{Constructing the vector space}

We continue to denote by $X$ the subset of $C$ defined in the previous subsection. For each $w\in \Idem(C)$, define 
\[
O_w=\{x\in X: x\nleq w\}.
\]

\begin{lemma}\label{Otopology}
Let $w_1,w_2\in \Idem(C)$. Then 
\begin{enumerate}[\rm (i)]
\item
$O_{w_1}\cup O_{w_2}=O_{w_1\wedge w_2}$.

\item
$O_{w_1}\cap O_{w_2}=O_{w_1+w_2}$.

\item
$O_{w_1}\subseteq O_{w_2}$ if and only if $w_1\geq w_2$.
\end{enumerate}
\end{lemma}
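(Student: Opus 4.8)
The plan is to translate each of the three set identities into an elementary statement about the relation $x\le w$ for $x\in X$ and $w$ an idempotent, and to isolate the two halves that genuinely require the special nature of $X$. I will use throughout the facts, established earlier, that for an idempotent $w$ one has $tw=w$ for all $t\in(0,\infty)$, that $w_1+w_2=w_1\vee w_2$, and that $x\le w\iff x+w=w$. Taking complements inside $X$, part (i) is equivalent to the assertion that $x\le w_1$ and $x\le w_2$ iff $x\le w_1\wedge w_2$, which is just the universal property of the meet and uses nothing about $X$. Likewise the ``easy'' halves of (ii) and (iii)---namely $O_{w_1}\cap O_{w_2}\supseteq O_{w_1+w_2}$ (equivalently, $x\le w_i$ for some $i$ forces $x\le w_1+w_2$) and the implication $w_1\ge w_2\Rightarrow O_{w_1}\subseteq O_{w_2}$---follow immediately from $w_i\le w_1+w_2$ and transitivity. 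So the whole content lies in two implications: the reverse inclusion of (ii), i.e. $x\le w_1+w_2\Rightarrow x\le w_1$ or $x\le w_2$ for $x\in X$; and the forward implication of (iii), $O_{w_1}\subseteq O_{w_2}\Rightarrow w_1\ge w_2$.

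For the reverse inclusion of (ii)---which I expect to be the main obstacle---the key inputs are the irreducibility of the generators $x\in X$ (Lemma \ref{irreducibles}(ii), i.e. $y+z=x$ forces $y\le z$ or $z\le y$) together with distributivity of addition over $\wedge$. Given $x\le w_1+w_2$, set $a=x\wedge w_1$ and $b=x\wedge w_2$, so that $a\le w_1$, $b\le w_2$, and $a,b\le x$. Expanding $a+b$ by distributivity of $+$ over $\wedge$ yields $a+b=2x\wedge(x+w_1)\wedge(x+w_2)\wedge(w_1+w_2)$, and each of the four meetands dominates $x$ (the last by hypothesis), so $a+b\ge x$. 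Now write $x=a+c$; irreducibility gives $a\le c$ or $c\le a$. If $c\le a$, then $x=a+c\le w_1+w_1=w_1$ and we are done; otherwise $a\le c$, so $2a\le a+c=x$, i.e. $a\le\tfrac12 x$. Running the symmetric argument with $b$ either yields $x\le w_2$ directly or gives $b\le\tfrac12 x$. In the latter case $a+b\le x$, which together with $a+b\ge x$ forces $a+b=x$; a final application of irreducibility to $x=a+b$ gives $a\le b$ or $b\le a$, whence $x\le 2b\le w_2$ or $x\le 2a\le w_1$. In every branch $x\le w_1$ or $x\le w_2$, as required.

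For the forward implication of (iii) I will argue by contraposition, producing an element of $X$ lying in $O_{w_1}\setminus O_{w_2}$ whenever $w_2\nleq w_1$. Put $u=w_1\wedge w_2$, so that $u<w_2$ (both idempotent). Since in the finitely generated setting every idempotent is compact and $C$ is strongly connected, Theorem \ref{kernelpropertythm} furnishes a non-idempotent $z$ with $u\le z\le w_2$ and $\s(z)=u$. As $z\ne u=\s(z)$, its canonical representation (Proposition \ref{Xrepresentation}) is $z=\sum_{i=1}^n\alpha_i x_i+u$ with $n\ge 1$, $x_i\in X$, $\s(x_i)\le u$, and $x_i\nleq u$. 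Each term satisfies $\alpha_i x_i\le z\le w_2$, and since $w_2$ is idempotent, scaling by $\alpha_i^{-1}$ gives $x_i\le\alpha_i^{-1}w_2=w_2$. Thus $x_i\le w_2$ while $x_i\nleq u=w_1\wedge w_2$, which forces $x_i\nleq w_1$; hence $x_i\in O_{w_1}\setminus O_{w_2}$, contradicting $O_{w_1}\subseteq O_{w_2}$. This finishes the argument once the routine complementation bookkeeping for (i)--(iii) is recorded.
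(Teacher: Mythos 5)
Your proof is correct and follows essentially the same route as the paper's: complementation disposes of (i) and the easy inclusions, the hard half of (ii) is the same case analysis on $x\wedge w_1$ and $x\wedge w_2$ driven by Lemma \ref{irreducibles}(ii) and distributivity of $+$ over $\wedge$, and the hard half of (iii) uses strong connectedness to manufacture an element of $X$ below $w_2$ but not below $w_1\wedge w_2$. The only (harmless) variation is in (iii), where you extract the witness from the canonical representation of Proposition \ref{Xrepresentation} applied to a non-idempotent $z$ with $\s(z)=w_1\wedge w_2$, whereas the paper picks an extreme-ray element of the face $\{x\in C_{w_1\wedge w_2}:x\le w_2\}$ and takes its minimal extension directly.
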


\begin{proof}
(i) It is more straightforward to work with the complements of the sets: $x\notin O_{w_1\wedge w_2}$ if and only if  $x\leq w_1\wedge w_2$, if and only if $x\leq w_1$ and $x\leq w_2$, i.e., $x\notin O_{w_1}$ and $x\notin O_{w_2}$.

(ii) Again, we work with complements. Let's show that $O_{w_1+w_2}^c\subseteq O_{w_1}^c\cup O_{w_2}^c$ (the opposite inclusion is clear). Let $x\in O_{w_1+w_2}^c$, i.e., $x\leq w_1+w_2$. Choose $z$ such that $x\wedge w_1+z=x$. Recall that the elements of $X$ are minimal extensions of non-idempotent elements that generate an extreme ray. Thus,  
by Lemma \ref{irreducibles} (ii), either $x\wedge w_1\leq z$ or $z\leq x\wedge w_1$. If $z\leq x\wedge w_1$,
then 
\[
x=x\wedge w_1+z\leq 2(x\wedge w_1)\leq w_1.
\] 
Hence $x\in O_{w_1}^c$, and we are done. Suppose that $x\wedge w_1\leq z$. It follows that $2(x\wedge w_1)\leq x$. Now repeat the same argument with $x$ and $w_2$. We are done unless we also have that $2(x\wedge w_2)\leq x$. In this case, adding the inequalities we get 
$2(x\wedge w_1)+ 2(x\wedge w_2)\leq 2x$, i.e., $x\wedge w_1 + x\wedge w_2 \leq x$. 
But $x\leq x\wedge w_1+x\wedge w_2$ (since $x\leq w_1+w_2$). Hence,  $x=x\wedge w_1+x\wedge w_2$. Applying Lemma \ref{irreducibles} (ii) again we get that either $x\leq 2(x\wedge w_1)\leq w_1$ or   $x\leq 2(x\wedge w_2)\leq w_2$. Hence, $x\in O_{w_1}^c\cup O_{w_2}^c$, as desired.

(iii) Suppose that $O_{w_1}\subseteq O_{w_2}$. By (i), $O_{w_1\wedge w_2}=O_{w_1}\cup O_{w_2}=O_{w_2}$. Assume, for the sake of contradiction, that $w_1\wedge w_2\neq w_2$. Since $C$ is strongly connected, there exists $x\in C_{w_1\wedge w_2}\setminus\{w_1\wedge w_2\}$ such that $x \leq w_2$. We can choose $x$ in an extreme ray of $C_{w_1\wedge w_2}$, since the set of all $x\in C_{w_1\wedge w_2}$ such that $x\leq w_2$ is a face. Consider the minimum extension $\tilde x$ of $x$. Adjusting $x$ by a scalar multiple, we may assume that $\tilde x\in X$. Now $\tilde x\leq w_2$, i.e, $\tilde x\notin O_{w_2}$. But we cannot have $\tilde x\leq w_1\wedge w_2$, since this would imply that 
\[
x=\tilde x+w_1\wedge w_2 = w_1\wedge w_2.
\] 
Thus, $x\in O_{w_1\wedge w_2}$. This contradicts that  $O_{w_1\wedge w_2}=O_{w_2}$.   
\end{proof}


Let $w\in \Idem(C)$. Define
\begin{align*}
P_w &=\{x\in O_w:\s(x)\leq w\},\\
\widetilde P_{w} &=P_w\cup O_w^c=\{x\in X:\s(x)\leq w\}.
\end{align*}
Observe that if $y\in C$, and $y=\sum_{i=1}^{n}\alpha_ix_i+w$ is the representation of $y$ described in Proposition \ref{Xrepresentation}, then $x_i\in P_{w}$ for all $i, 1\le i\le n$.

\begin{lemma}\label{MTconditions}
Let $w_1,w_2\in \Idem(C)$. The following statements hold:
\begin{enumerate}[\rm (i)]
\item
$\widetilde P_{w_1\wedge w_2}=\widetilde P_{w_1}\cap \tilde P_{w_2}$.
\item
If $w_1\ngeq w_2$ then $P_{w_1}\backslash O_{w_2}\neq \varnothing$.
\end{enumerate}
\end{lemma}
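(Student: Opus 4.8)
The plan is to dispatch (i) immediately from the alternative description $\widetilde P_w=\{x\in X:\s(x)\le w\}$ recorded right after the definition of $\widetilde P_w$, and to prove (ii) by transporting, into the present setting, the extreme-ray argument used in the proof of Lemma~\ref{Otopology}(iii).

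For part (i), I would observe that for any $x\in X$ one has $\s(x)\le w_1\wedge w_2$ if and only if $\s(x)\le w_1$ and $\s(x)\le w_2$, since $w_1\wedge w_2$ is the infimum of $w_1$ and $w_2$. Reading this through $\widetilde P_w=\{x\in X:\s(x)\le w\}$ gives $x\in\widetilde P_{w_1\wedge w_2}$ iff $x\in\widetilde P_{w_1}\cap\widetilde P_{w_2}$, which is exactly the claim. This step is routine.

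For part (ii), set $v=w_1\wedge w_2$. The hypothesis $w_1\ngeq w_2$ means $w_2\nleq w_1$, hence $v\neq w_2$ and so $v<w_2$. Since $C$ is finitely generated, every idempotent is compact, and absolute compactness entails relative compactness, so $v$ is compact relative to $w_2$. By strong connectedness (Theorem~\ref{kernelpropertythm}) there is a non-idempotent $y$ with $v\le y\le w_2$ and $\s(y)=v$; in particular $y\in C_v\setminus\{v\}$, so $C_v\neq\{v\}$ and $C_v\cong[0,\infty)^{n_v}$ by Lemma~\ref{fingenCw}. As in the proof of Lemma~\ref{Otopology}(iii), the set $F=\{z\in C_v:z\le w_2\}$ is a face of $C_v$ (if $a+b\le w_2$ with $a,b\in C_v$ then $a,b\le w_2$), and it is nonzero because $y\in F$; being a nonzero face of the simplicial cone $C_v$, it contains a generator $x_0$ of an extreme ray of $C_v$ with $x_0\le w_2$.

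The concluding step would pass to the minimal extension. Let $\tilde x_0$ be the minimal extension of $x_0$, and let $x\in X$ be the chosen representative of the ray $(0,\infty)\cdot\tilde x_0\subseteq X_v$, say $x=\lambda\tilde x_0$. Then $x+v=\lambda(\tilde x_0+v)=\lambda x_0=:x_0'$, an extreme-ray generator of $C_v$ with $x_0'\le w_2$ and $x_0'\neq v$. I would then verify the three required conditions: first, $x\le x+v=x_0'\le w_2$, so $x\notin O_{w_2}$; second, applying the additive map $\s(\cdot)$ (Lemma~\ref{supportlemma}(i)) to $x+v=x_0'$ and using $\s(v)=v=\s(x_0')$ yields $\s(x)+v=v$, whence $\s(x)\le v\le w_1$, so $x\in\widetilde P_{w_1}$; third, if $x\le w_1$ then $x\le w_1\wedge w_2=v$, forcing $x+v=v$, i.e.\ $x_0'=v$, a contradiction, so $x\nleq w_1$, i.e.\ $x\in O_{w_1}$. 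Combining $x\in O_{w_1}$ with $\s(x)\le w_1$ gives $x\in P_{w_1}$, and together with $x\notin O_{w_2}$ this shows $x\in P_{w_1}\setminus O_{w_2}$.

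The main obstacle is part (ii), and within it the crux is producing a usable element of $X$: strong connectedness must be invoked to guarantee that the face $F$ is nonzero, an extreme ray of the simplicial cone $C_v$ lying inside $F$ must be extracted, and the bookkeeping of support idempotents together with the scalar normalization (so that the representative of the minimal extension genuinely lands in $X$ while all three inequalities survive) must be tracked carefully. Part (i) is immediate by comparison.
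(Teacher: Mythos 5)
Your proof is correct and follows essentially the same strategy as the paper's: part (i) is read off from the description $\widetilde P_w=\{x\in X:\s(x)\le w\}$, and part (ii) uses strong connectedness to produce a non-idempotent in a gap between idempotents, extracts an extreme ray from a face of the resulting simplicial subcone, and normalizes its minimal extension into $X$. The only real difference is the choice of gap: the paper applies strong connectedness to $w_1\le w_1+w_2$ and finds the extreme ray in the face $\{y\in C_{w_1}:y\le w_1+w_2\}$, then invokes Lemma \ref{Otopology}(ii) to land in $O_{w_2}^c$, whereas you apply it to $w_1\wedge w_2\le w_2$ and work in $C_{w_1\wedge w_2}$, which makes the verification $x\le w_2$ immediate and bypasses Lemma \ref{Otopology}(ii) entirely; your bookkeeping of the scalar normalization and of $\s(x)\le w_1\wedge w_2$ is sound.
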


\begin{proof}
(i) This is straightforward: $\s(x)\le w_1$ and $\s(x)\leq w_2$ if and only if $\s(x)\le w_1\wedge w_2$.
	
(ii) Suppose that $w_1\not\ge w_2$. Let $w_3=w_1+w_2$. By Lemma \ref{Otopology} (ii), $O_{w_1}\cap O_{w_2}=O_{w_3}$. Also $w_1\le w_3$ and $w_1\ne w_3$. Since $C$ is strongly connected, there exists $y\in C_{w_1}\setminus\{w_1\}$ 
such that $w_1\le y\le w_3$. Choose $y$ on  an extreme ray (always possible, since the set of all $y\in C_{w_1}$ such that $y\le w_3$ is a face) and adjust it by a scalar so that its minimum extension $\tilde y$ belongs to $X$. Since $\tilde y+w_1\in C_{w_1}\backslash\{w_1\}$, we have that $\tilde y\nleq w_1$ and $\s(\tilde y)\leq w_1$.
That is, $\tilde y\in P_{w_1}$. Since $\tilde y\le w_3$, we also have that $\tilde y\in O_{w_3}^c\subseteq O_{w_2}^c$. We have thus obtained an element $\tilde y\in P_{w_1}\setminus O_{w_2}$.
\end{proof}

Let us say that a function $f\colon X\to \R$ is positive provided that there exists $w\in \Idem(C)$ such that $f(x)=0$ for $x\notin O_w$ and $f(x)>0$ for $x\in P_w$. We call $w$ the support of $f$ and denote it by $\supp(f)$.

\begin{lemma}\label{supportV}
The support of a positive function is unique. Further, if $f,g\colon X\to \R$ are positive then 
$\supp(f+g)=\supp(f)\wedge \supp(g)$.
\end{lemma}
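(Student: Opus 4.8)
The plan is to treat the two assertions separately, deriving uniqueness first and then invoking it to pin down $\supp(f+g)$.

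For uniqueness, suppose $f$ admits two supports $w_1,w_2$. The key observation is that Lemma \ref{MTconditions}(ii) forces $w_1$ and $w_2$ to be comparable in both directions. Concretely, I would argue by contradiction: if $w_1\ngeq w_2$, then Lemma \ref{MTconditions}(ii) produces a point $x\in P_{w_1}\setminus O_{w_2}$. Since $w_1$ is a support, $x\in P_{w_1}$ gives $f(x)>0$; but $x\notin O_{w_2}$ means $x\leq w_2$, so since $w_2$ is also a support, $f(x)=0$ --- a contradiction. Hence $w_1\geq w_2$, and the symmetric argument gives $w_2\geq w_1$, so $w_1=w_2$.

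For the formula, set $w_1=\supp(f)$, $w_2=\supp(g)$ and $w=w_1\wedge w_2$; I would show that $f+g$ is positive with support $w$, after which the uniqueness just proved yields $\supp(f+g)=w$. The vanishing condition is immediate: if $x\leq w$, then $x\leq w_1$ and $x\leq w_2$, so $f(x)=g(x)=0$ and $(f+g)(x)=0$. For strict positivity on $P_w$, I use Lemma \ref{Otopology}(i), which identifies $O_w=O_{w_1}\cup O_{w_2}$; thus $x\in P_w$ means $\s(x)\leq w$ together with $x\nleq w_1$ or $x\nleq w_2$.

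The main subtlety --- and essentially the only place where care is needed --- is that a positive function is left \emph{unconstrained} on the region $O_w\setminus P_w$ (where $\s(x)\nleq w$), so a priori $f$ or $g$ could be negative there, which would threaten positivity of the sum. The resolution is that the defining constraint $\s(x)\leq w=w_1\wedge w_2$ for $x\in P_w$ excludes exactly this bad region for \emph{both} functions. Indeed, since $\s(x)\leq w_2$, the point $x$ lies in $\widetilde P_{w_2}=P_{w_2}\cup O_{w_2}^c$, so $g(x)>0$ or $g(x)=0$; in particular $g(x)\geq 0$, and symmetrically $f(x)\geq 0$. Combining this with, say, the case $x\nleq w_1$ and $\s(x)\leq w_1$, which give $x\in P_{w_1}$ and hence $f(x)>0$, I conclude $(f+g)(x)=f(x)+g(x)>0$; the case $x\nleq w_2$ is handled symmetrically. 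This shows $f+g$ is positive with support $w_1\wedge w_2$, and uniqueness finishes the proof.
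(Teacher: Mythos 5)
Your proof is correct and follows essentially the same route as the paper: uniqueness via the point supplied by Lemma \ref{MTconditions}(ii), and the support formula via the observation that any $x\in P_{w_1\wedge w_2}$ lies in $\widetilde P_{w_1}\cap\widetilde P_{w_2}$, so each of $f,g$ is nonnegative at $x$ and at least one is strictly positive. Your explicit flagging of the unconstrained region $O_w\setminus P_w$ is a nice clarification of why the $\widetilde P$ sets are the right tool, but the underlying case analysis is the paper's.
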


\begin{proof}
Let $w_1,w_2\in \Idem(C)$ be both supports of $f$. Suppose that $w_1\ne w_2$, and without loss of generality, that $w_1\not\ge w_2$. Then there exists $x\in P_{w_1}\cap O_{w_2}^c$ (by Lemma \ref{MTconditions}). On one hand, $x\in P_{w_1}$ implies that $f(x)>0$. On the other hand, $x\in O_{w_2}^c$ implies that $f(x)=0$, a contradiction. Thus $w_1=w_2$, whereby proving the first part of the lemma.
	
To prove the second part, assume that $f$ and $g$ are positive functions on $X$, and set  $v= \supp(f)$ and $w= \supp(g)$. Clearly $f+g$ vanishes on $ O_{v}^c\cap O_{w}^c= O_{v\wedge w}^c$. Let $x\in P_{v\wedge w}$. Then, by Lemma \ref{MTconditions} (i), $x\in \tilde{P}_{v}\cap\tilde{P}_{w}$. Thus, $x$ is in one of the following sets: $P_{v}\cap P_{w}$, $P_{\nu}\cap O_{w}^c$, or $P_{w}\cap O_{\nu}^c$. In all cases we see that $(f+g)(x)>0$. Indeed, if $x\in P_{\nu}\cap P_{w}$ then $f(x),g(x)>0$; if $x\in  P_{\nu}\cap O_{w}^c$ then $f(x)>0$ and $g(x)=0$; if $x\in P_{w}\cap O_{\nu}^c$ then $f(x)=0$ and $g(x)>0$. Therefore $ \supp(f+g)=v\wedge w$.
\end{proof}

Let us denote by $V_C$ the vector space  of $\R$-valued functions on $X$ and by $V_C^+$ the set of positive functions in $V_C$. 

\begin{theorem}
The pair $(V_C,V_C^+)$ is an ordered vector space having the Riesz interpolation property.
\end{theorem}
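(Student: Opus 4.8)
The plan is to verify in turn that $V_C^+$ is a convex cone, that it is salient, that it is generating, and finally that it has the refinement property, which for a directed ordered vector space is equivalent to Riesz interpolation.

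First, $V_C^+$ is closed under multiplication by positive scalars by inspection. That it is closed under addition, and contains $0$, is exactly the content of Lemma \ref{supportV}: if $f,g\in V_C^+$ then $f+g\in V_C^+$ with $\supp(f+g)=\supp(f)\wedge\supp(g)$, and the zero function lies in $V_C^+$ with support $\infty$ (indeed $O_\infty=\varnothing$, so the defining conditions for support $\infty$ read $f\equiv 0$). Consequently $f\le g\iff g-f\in V_C^+$ is a translation- and scale-invariant preorder. For antisymmetry, suppose $f,-f\in V_C^+$. By Lemma \ref{supportV}, $\supp(f)\wedge\supp(-f)=\supp(f+(-f))=\supp(0)=\infty$, so $\supp(f)=\supp(-f)=\infty$ (as $\infty$ is the top idempotent); but the only positive function of support $\infty$ is $0$, whence $f=0$. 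Thus $V_C^+\cap(-V_C^+)=\{0\}$, so $(V_C,V_C^+)$ is an ordered vector space. It is moreover directed: the constant function $\mathbf 1$ belongs to $V_C^+$, with support the least idempotent $0$, since $O_0=X$ and $P_0=\{x:\s(x)=0\}$, on which $\mathbf 1$ is positive; and for any $f\in V_C$ and large enough $M>0$ both $M\mathbf 1$ and $M\mathbf 1+f$ lie in $V_C^+$, so $f=(M\mathbf 1+f)-M\mathbf 1\in V_C^+-V_C^+$.

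For the Riesz property it suffices, since $(V_C,V_C^+)$ is directed, to establish the refinement property: whenever $a_1+a_2=b_1+b_2$ with $a_i,b_j\in V_C^+$, there exist $c_{ij}\in V_C^+$ with $c_{i1}+c_{i2}=a_i$ and $c_{1j}+c_{2j}=b_j$. A preliminary observation drives the construction: by Lemma \ref{Otopology}, the assignment $w\mapsto O_w$ is an order-reversing injection of $\Idem(C)$ into $(2^X,\cup,\cap)$ carrying $\wedge$ to $\cup$ and $\vee\,(=+)$ to $\cap$; hence $\Idem(C)$ is a distributive lattice. Writing $\alpha_i=\supp(a_i)$, $\beta_j=\supp(b_j)$, we have $\alpha_1\wedge\alpha_2=\supp(a_1+a_2)=\supp(b_1+b_2)=\beta_1\wedge\beta_2=:e$, and I would build each $c_{ij}$ with prescribed support $\alpha_i\vee\beta_j$. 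Distributivity makes the margins consistent at the level of supports: $(\alpha_i\vee\beta_1)\wedge(\alpha_i\vee\beta_2)=\alpha_i\vee(\beta_1\wedge\beta_2)=\alpha_i\vee e=\alpha_i$ (as $e\le\alpha_i$), and symmetrically for the columns.

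It remains to fix the values of the $c_{ij}$, which I would do one point of $X$ at a time. At a fixed $x\in X$ the equation $a_1(x)+a_2(x)=b_1(x)+b_2(x)$ is a two-by-two margin problem whose solutions form a one-parameter family $c_{11}(x)=t$, $c_{12}(x)=a_1(x)-t$, $c_{21}(x)=b_1(x)-t$, $c_{22}(x)=a_2(x)-b_1(x)+t$. Membership of $c_{ij}$ in $V_C^+$ with support $\alpha_i\vee\beta_j$ prescribes the sign of $c_{ij}(x)$: it must vanish when $x\le\alpha_i\vee\beta_j$, be strictly positive when $x\in P_{\alpha_i\vee\beta_j}$, and is unconstrained when $\s(x)\nleq\alpha_i\vee\beta_j$. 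Each requirement cuts out a point, an open half-line, or all of $\R$ in the parameter $t$, and the task is to show these four sets share a common point. The distributive identities above guarantee that the vanishing requirements are mutually consistent and compatible with the margins---for instance, if both $c_{11}(x)$ and $c_{12}(x)$ are forced to vanish then $x\le(\alpha_1\vee\beta_1)\wedge(\alpha_1\vee\beta_2)=\alpha_1$, so indeed $a_1(x)=0$---while the strict-positivity requirements, being open conditions, are met on suitable subintervals. The feasibility of choosing $t=t(x)$ in every case, carried out by a case analysis on the positions of $\s(x)$ and of the least idempotent above $x$ relative to $\alpha_1,\alpha_2,\beta_1,\beta_2$ in the distributive lattice $\Idem(C)$, is the technical heart of the argument and the one place where the irreducibility of the elements of $X$ (Lemma \ref{irreducibles}) and the structure of the sets $P_w,\widetilde P_w$ (Lemma \ref{MTconditions}) are brought fully to bear. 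Once $t(x)$ is fixed for each of the finitely many $x\in X$, the resulting $c_{ij}$ lie in $V_C^+$ with the required supports (by uniqueness of support, Lemma \ref{supportV}) and satisfy the margin equations pointwise, yielding refinement and hence the Riesz interpolation property.
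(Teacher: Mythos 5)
Your verification that $(V_C,V_C^+)$ is an ordered vector space is correct and essentially identical to the paper's: closure of $V_C^+$ under addition is Lemma \ref{supportV}, the property $V_C^+\cap(-V_C^+)=\{0\}$ follows from $\supp(f)\wedge\supp(-f)=\supp(0)=\infty$, and the strictly positive functions generate $V_C$. The divergence, and the problem, is in the Riesz interpolation part. The paper does not prove interpolation directly: it observes that Lemma \ref{MTconditions} (i) and (ii) are exactly the hypotheses of \cite[Corollary 5.1]{maloney-tikuisis}, a result of Maloney and Tikuisis characterizing when a finite dimensional ordered vector space of this combinatorial type (positivity prescribed by a family of pairs $(O_w,P_w)$ indexed by a lattice) has Riesz interpolation, and then invokes that result. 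You instead set out to prove refinement from scratch, via the pointwise two-by-two margin problem with its one-parameter family of solutions $t\mapsto\bigl(t,\,a_1(x)-t,\,b_1(x)-t,\,a_2(x)-b_1(x)+t\bigr)$ and the ansatz $\supp(c_{ij})=\alpha_i\vee\beta_j$.

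That skeleton is reasonable, but the proof stops exactly where the work begins: you state that the simultaneous satisfiability of the four constraints on $t(x)$ ``is the technical heart of the argument'' and never carry it out. This is a genuine gap, not a routine verification. For instance, if $x\le\alpha_1\vee\beta_1$ and $x\le\alpha_2\vee\beta_2$, both diagonal entries are forced to vanish, which pins down $t=0$ and $t=b_1(x)-a_2(x)$ simultaneously; one must then prove $a_2(x)=b_1(x)$, and doing so requires deploying $O_{w_1}\cap O_{w_2}=O_{w_1+w_2}$ (Lemma \ref{Otopology} (ii), which is where the irreducibility of Lemma \ref{irreducibles} actually enters) to conclude that $x\le\alpha_1$ or $x\le\beta_1$, and similarly for the second row, before the margin equation closes the argument. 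One must also account for the fact that $a_i$ and $b_j$ may take \emph{negative} values on $O_w\setminus P_w$, which your ``open half-line'' picture does not engage with, and verify that the strict-positivity requirements on $P_{\alpha_i\vee\beta_j}$ leave a nonempty intersection of the resulting intervals. Carrying out this case analysis in full is precisely the content of the Maloney--Tikuisis result. As written, the proposal either needs that citation (in which case it reduces to the paper's proof) or a substantial amount of combinatorics that is not present.
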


\begin{proof}
By the previous lemma, $V_C^+$ is closed under addition. Clearly, $V_C^+$ is closed under multiplication by positive scalars. Since the pointwise strictly positive functions belong to $V_C^+$ and span $V_C$, we have   $V_C^+-V_C^+=V_C$. Also, $V_C^+\cap -V_C^+=\{0\}$, for if $f$ and $-f$ are positive then, by the previous lemma, 
\[
\supp(f)\geq \supp(f+-f)=\supp(0)=\infty,
\] 
which implies that $f=0$. Thus,  $(V_C,V_C^+)$ is an ordered vector space. 

In  \cite{maloney-tikuisis}, Maloney and Tikuisis obtained conditions guaranteeing that the Riesz interpolation property holds in a finite dimensional ordered vector space. The properties of the sets $P_w$ obtained in Lemma \ref{MTconditions} (i) and (ii) are precisely those properties in \cite[Corollary 5.1]{maloney-tikuisis} shown to guarantee that the Riesz interpolation property  holds in $(V_C,V_C^+)$.
\end{proof}

Let us define a pairing $(\cdot,\cdot)\colon C\times V_C^+\to [0,\infty]$ as follows: for each $y\in C$ and $f\in V_C^+$, write  $y=\sum_{i=1}^n \alpha_i x_i + w$, the representation of $y$ described in Proposition \ref{Xrepresentation}, and then set
\[
(y,f)=
\begin{cases}
\sum\limits_{i=1}^{n}\alpha_if(x_i) & \hbox{if }w\le \supp(f),\\
\infty &\hbox{otherwise}.
\end{cases}
\]

\begin{theorem}
The pairing defined above is bilinear. Moreover, the map $x\mapsto (x,\cdot)$, from $C$
to $\mathrm{Hom}(V_C^+,[0,\infty])$, is an isomorphism of extended Choquet cones.
\end{theorem}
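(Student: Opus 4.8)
The plan is to verify bilinearity first, and then to analyze the map $\Phi\colon C\to\Hom(V_C^+,[0,\infty])$, $\Phi(y)=(y,\cdot)$, showing successively that it is a continuous cone morphism, injective, and surjective. Homogeneity of the pairing in each variable is immediate from Proposition \ref{Xrepresentation} and the conventions, so the only delicate point in bilinearity is additivity in the first variable. Writing $y=\sum_i\alpha_i x_i+w$ and $y'=\sum_j\beta_j x_j'+w'$ in canonical form, the support idempotent of $y+y'$ is $w\vee w'=w+w'$, and in passing to the canonical form of $y+y'$ exactly those generators with $x_i\le w+w'$ (resp. $x_j'\le w+w'$) are absorbed into the idempotent part. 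The key observation is that whenever $(y+y',f)$ is finite, i.e.\ $w+w'\le\supp(f)$, each absorbed generator $x$ satisfies $x\le w+w'\le\supp(f)$, hence $f(x)=0$; so the absorbed terms contribute nothing and $(y+y',f)=(y,f)+(y',f)$. The finite/infinite dichotomy is compatible because $\s(y+y')=w+w'$ lies below $\supp(f)$ if and only if both $w$ and $w'$ do. Additivity and homogeneity in the second variable follow from $\supp(f+g)=\supp(f)\wedge\supp(g)$ (Lemma \ref{supportV}) by the same bookkeeping, so that $\Phi(y)\in\Hom(V_C^+,[0,\infty])$.

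Linearity of $\Phi$ is bilinearity in the first variable. For continuity I would fix $f$, set $v=\supp(f)$, and show that $h_f:=(\cdot,f)$ lies in $\A(C)$: one checks $\supp(h_f)=v$, and that for $y\in C_v$ the representation of $y$ involves only generators in $P_v$, so $h_f|_{C_v}$ is finite, linear and strictly positive off $v$, whence $h_f\in\A_v(C)\subseteq\A(C)$ by Theorem \ref{ACwbijection}. This gives continuity of $\Phi$ into the topology of pointwise convergence; being a continuous injection between compact Hausdorff cones, it is then automatically an isomorphism onto its image. For injectivity, note that for each $w\in\Idem(C)$ the indicator of $P_w$ (zero on $\{x\le w\}$, positive on $P_w$) is a positive function of support $w$, so $(y,\cdot)$ recovers $\s(y)=\bigwedge\{\supp(f):(y,f)<\infty\}$; fixing $w=\s(y)=\s(y')$, the support-$w$ functions restrict on $P_w$ to all strictly positive vectors and $(y,f)=\sum_{x\in P_w}\alpha_x f(x)$ is linear in $f|_{P_w}$, so evaluating against them separates the coefficients and forces the two canonical representations to agree.

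The heart of the matter is surjectivity. Given $\lambda\in\Hom(V_C^+,[0,\infty])$, put $D_\lambda=\{\supp(f):\lambda(f)<\infty\}$. One first checks that $\lambda(f)<\infty$ depends only on $\supp(f)$ (two positive functions of equal support $v$ dominate scalar multiples of one another, because a function is positive regardless of its values on $O_v\setminus P_v$), that $D_\lambda$ is $\wedge$-closed (Lemma \ref{supportV}), and that it is upward closed (for $v\le v'$ with $v\in D_\lambda$ one dominates a support-$v'$ function by a scalar multiple of a support-$v$ function, using Lemma \ref{Otopology}); being a filter in the finite lattice $\Idem(C)$ it is principal, $D_\lambda=\{v:v\ge w\}$. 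Extending $\lambda$ from the face $F_\lambda=\{f:\supp(f)\ge w\}$ to a linear functional $\bar\lambda$ on $U=F_\lambda-F_\lambda\subseteq\R^{O_w}$, the decisive step is to show that $\bar\lambda$ annihilates the $\R^{O_w\setminus P_w}$-directions of $U$: for $x_0\in O_w\setminus P_w$ and any support-$w$ function $f$, the functions $f+t\delta_{x_0}$ (where $\delta_{x_0}$ is the indicator of $\{x_0\}$) remain positive of support $w$ for \emph{every} real $t$, so $t\mapsto\bar\lambda(f+t\delta_{x_0})$ is an everywhere nonnegative affine function of $t\in\R$, hence constant, giving $\bar\lambda(\delta_{x_0})=0$. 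Therefore $\bar\lambda(f)=\sum_{x\in P_w}c_x f(x)$, and the same freedom of sign at a single coordinate of $P_w$ forces $c_x\ge 0$. Setting $y=\sum_{x\in P_w}c_x\,x+w$ yields $\s(y)=w$ and $(y,f)=\lambda(f)$ for all $f\in F_\lambda$, while the infinite values match because $D_{\Phi(y)}=\{v\ge w\}=D_\lambda$; thus $\Phi(y)=\lambda$.

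The main obstacle is precisely this descent step. A priori the coordinates of $\bar\lambda$ along $O_w\setminus P_w$ need not vanish, and it is the positivity of $\lambda$ together with the unconstrained behaviour of positive functions on $O_w\setminus P_w$ (which is exactly what Lemma \ref{MTconditions} and Lemma \ref{irreducibles} are designed to encode) that kills them and lets $\bar\lambda$ factor through restriction to $P_w$. I expect the secondary technical points to be the upward closedness of $D_\lambda$ and the verification that $\supp(h_f)=\supp(f)$, both of which feed the arguments above and are handled by the domination and absorption estimates already used in the proof of Lemma \ref{supportV} and Theorem \ref{ACwbijection}.
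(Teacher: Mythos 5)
Your proof is correct and follows essentially the same route as the paper's: bilinearity via the canonical representations of Proposition \ref{Xrepresentation}, injectivity by separating coefficients using functions of prescribed support, and surjectivity by locating the minimal support idempotent $w$, extending $\lambda$ linearly to the functions supported on $O_w$, and killing the coordinates on $O_w\setminus P_w$ via the two-sided sign freedom there (your phrase ``freedom of sign at a single coordinate of $P_w$'' should really be the one-sided limiting argument $\lambda(\mathbbm{1}_x+\epsilon\mathbbm{1}_{P_w})\ge 0$, $\epsilon\to 0$, but the conclusion $c_x\ge 0$ is the same). You are in fact slightly more careful than the paper on two points: the possible absorption of generators $x_i\le w+w'$ when adding canonical representations, which you correctly dispose of by noting that $f$ vanishes on any such $x_i$, and the explicit verification that each $(\cdot,f)$ lies in $\A(C)$, which supplies the topological half of ``isomorphism of extended Choquet cones'' that the paper leaves implicit.
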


\begin{proof}
Let $x,y\in C$ and $f\in V_C^+$. Write
\begin{align*}
x &= \sum_{i=1}^m \alpha_ix_i+v,\\
y &= \sum_{j=1}^n \beta_j y_j+w,
\end{align*}
with $v,w\in \Idem(C)$ and $x_i,y_j\in X$ as in Proposition \ref{Xrepresentation}.  Then
\[
x+y=\sum_{i=1}^m \alpha_ix_i+\sum_{j=1}^n \beta_j y_j+v+w.
\]
Observe that $\s(x_i),\s(y_j)\leq v+w$ and that $\alpha_i,\beta_j\in (0,\infty)$ for all $i,j$.
Thus, the sum on the right side is the representation of $x+y$ described in
Proposition \ref{Xrepresentation}, except for the possible repetition of elements of $X$ appearing both among the  $x_i$s and the $y_j$s. If $v+w\leq \supp(f)$, then $v\leq \supp(f)$ and $w\leq \supp(f)$, and
so 
\[
(x,f)+(y,f)=\sum_{i=1}^m \alpha_i f(x_i) + \sum_{j=1}^n \beta_jf(y_j)=(x+y,f). 
\]
If, on the other hand, $v+w\nleq \supp(f)$, then either $v\nleq \supp(f)$ or $w\nleq \supp(f)$,
and in either case $(x,f)+(y,f)=\infty=(x+y,f)$. This proves additivity on the first coordinate. 
Homogeneity with respect to scalar multiplication follows automatically from additivity.

Let $f,g\in V^+_C$ and $w\in \Idem(C)$. Then $w\le\supp(f+g)$ if and only if $w\le\supp(f)$ and 
$w\le\supp(g)$ (Lemma \ref{supportV}). This readily shows linearity on the second coordinate.

For each $x\in C$, let $\Lambda_x\in \mathrm{Hom}(V_C^+,[0,\infty])$ be defined by the pairing above: $\Lambda_x(f)=(x,f)$ for all $f\in V_C^+$. Let $\Lambda\colon C\to \mathrm{Hom}(V_C^+,[0,\infty])$ be the map given by $y\mapsto\Lambda_y$ for all $y\in C$. To prove that $\Lambda$ is injective, suppose that $y,z\in C$ are such that $\Lambda_y=\Lambda_z$. Choose any $f\in V^+_C$ such that $\supp(f)=\s(y)$. If $\s(y)\not\le \s(z)$ then $\Lambda_y(f)$ is finite, while $\Lambda_z(f)=\infty$. This contradicts that $\Lambda_y=\Lambda_z$. Hence $\s(y)\le \s(z)$. By a similar argument $\s(z)\le \s(y)$, and so we get equality. Set $w=\s(y)=\s(z)$. Then we can write 
\begin{align*}
y &=\sum_{i=1}^m\alpha_iy_i+w,\\
z &=\sum_{i=1}^n\beta_iz_i+w
\end{align*}
with $y_i,z_i\in P_{w}$ for all $i$. Let $f\in V_C$ be such that $\supp(f)=w$. Then 
$f(y_i),f(z_i)>0$ and
\begin{equation}\label{Lambdayz}
\sum_{i=1}^{m}\alpha_if(y_i)=\Lambda_y(f)=\Lambda_z(f)=\sum_{i=1}^{n}\beta_if(z_i).
\end{equation} 
Let $V_{w}^+= \{f\in\ V^+_C\colon \supp(f)=w\}$, i.e., $f\in V_w^+$
if $f$ is positive on $P_w$ and zero outside $O_w$. It is clear that  $V_w^+-V_w^+$ consists of all the functions
on $X$ that vanish outside $O_w$. It then follows from \eqref{Lambdayz}  that  $n=m$ and that, up to relabelling, $y_i=z_i$ for all $1\le i\le n$. Consequently $y=z$.

Let us show that $\Lambda$ is surjective. Let $\lambda\in \mathrm{Hom}(V_C^+,[0,\infty])$. By Lemma \ref{supportV}, the set 
\[
\{w\in \Idem(C): w=\supp(f)\hbox{ for some }f\in V_C^+\hbox{ such that }\lambda(f)<\infty\}
\] 
is closed under infima. Since this set is also finite, it has a minimum element $w$. We claim that for each $f\in V_C^+$ we have
\[
\lambda(f)<\infty\Leftrightarrow w\leq \supp(f).
\]
Indeed, from the definition of $w$ it is clear that if $\lambda(f)<\infty$ then $w\leq \supp(f)$. Suppose on the other hand that $f\in V_C^+$ is such that $w\leq \supp(f)$. Let $f_0\in V_w^+$ be such that $\lambda(f_0)<\infty$. Then $\alpha f_0 - f$ is positive (with support $w$) for a sufficiently large scalar $\alpha\in (0,\infty)$. Thus, $\lambda(f)\leq \alpha\lambda(f_0)<\infty$.

Let us extend $\lambda$ by linearity to the vector subspace $V_w:=V_w^+-V_w^+$. As remarked above, $V_{w}$ consists of all the functions $f\colon X\to \R$ vanishing on the complement of  $O_w$. That is, $V_{w}=\mathrm{span}(\{\mathbbm{1}_x\colon x\in O_w\})$, where $\mathbbm{1}_x$ denotes the characteristic function of $\{x\}$. 

If $x\in P_w$, then $\mathbbm{1}_x+\epsilon\mathbbm{1}_{P_w}\in V_w^+$ for all $\epsilon>0$; here $\mathbbm{1}_{P_w}$ denotes  the characteristic function of $P_w$. It follows that $\lambda(\mathbbm{1}_x+\epsilon\mathbbm{1}_{P_w})\geq 0$, and letting $\epsilon\to 0$, that $\lambda(\mathbbm{1}_x)\geq 0$ for all $x\in P_w$. If  $x\in O_w\backslash P_w$, then $\lambda(\mathbbm{1}_{P_w}-\alpha\mathbbm{1}_x)\geq 0$ for  all $\alpha\in \R$. It follows that $\lambda(\mathbbm{1}_x)=0$ for all $x\in O_w\setminus P_w$. Thus 
\[
\lambda(f)=\sum_{x\in P_{w}}\lambda(\mathbbm{1}_x)f(x)
\]
for all $f\in V_w$. Since $V_v^+\subseteq V_w$ for any idempotent $v$ such that $w\leq v$, the formula above holds for all $f\in V_C^+$ such that $w\leq \supp(f)$. 

Define
\[
y=\sum_{x\in P_{w}}\lambda(\mathbbm{1}_x)x+w. 
\]
By the previous arguments,  $\lambda(f)=\Lambda_y(f)$ for all $f$ such that $w\leq \supp(f)$. On the other hand, $\lambda(f)=\infty=\Lambda_y(f)$ for all $f$ such that $w\nleq \supp(f)$. Hence, $\lambda=\Lambda_y$.
\end{proof}

\bibliographystyle{plain}
\bibliography{ExtCho}

\begin{thebibliography}{10}

\bibitem{alfsen}
Erik~M. Alfsen.
\newblock {\em Compact convex sets and boundary integrals}.
\newblock Springer-Verlag, New York-Heidelberg, 1971.
\newblock Ergebnisse der Mathematik und ihrer Grenzgebiete, Band 57.

\bibitem{edwards}
Ramon {Antoine}, Francesc {Perera}, Leonel {Robert}, and Hannes {Thiel}.
\newblock Edwards' condition for quasitraces on c*-algebras.
\newblock {\em arXiv e-prints}, page arXiv:1909.12787, Sep 2019.

\bibitem{ultraCu}
Ramon {Antoine}, Francesc {Perera}, Leonel {Robert}, and Hannes {Thiel}.
\newblock Traces on ultrapowers of c*-algebras.
\newblock {\em arXiv e-prints}, 2020.

\bibitem{tensorthiel}
Ramon Antoine, Francesc Perera, and Hannes Thiel.
\newblock Tensor products and regularity properties of {C}untz semigroups.
\newblock {\em Mem. Amer. Math. Soc.}, 251(1199):viii+191, 2018.

\bibitem{blackadar}
Bruce~E. Blackadar.
\newblock Traces on simple {AF} {$C^*$}-algebras.
\newblock {\em J. Functional Analysis}, 38(2):156--168, 1980.

\bibitem{blanchard-kirchberg}
Etienne Blanchard and Eberhard Kirchberg.
\newblock Non-simple purely infinite {$C^*$}-algebras: the {H}ausdorff case.
\newblock {\em J. Funct. Anal.}, 207(2):461--513, 2004.

\bibitem{bratteli-elliott}
Ola Bratteli and George~A. Elliott.
\newblock Structure spaces of approximately finite-dimensional
  {$C^*$}-algebras. {II}.
\newblock {\em J. Functional Analysis}, 30(1):74--82, 1978.

\bibitem{brown-ciuperca}
Nathanial~P. Brown and Alin Ciuperca.
\newblock Isomorphism of {H}ilbert modules over stably finite {$C^*$}-algebras.
\newblock {\em J. Funct. Anal.}, 257(1):332--339, 2009.

\bibitem{CEI}
Kristofer~T. Coward, George~A. Elliott, and Cristian Ivanescu.
\newblock The {C}untz semigroup as an invariant for {$C^*$}-algebras.
\newblock {\em J. Reine Angew. Math.}, 623:161--193, 2008.

\bibitem{davies}
E.~B. Davies and G.~F. Vincent-Smith.
\newblock Tensor products, infinite products, and projective limits of
  {C}hoquet simplexes.
\newblock {\em Math. Scand.}, 22:145--164 (1969), 1968.

\bibitem{EHS}
Edward~G. Effros, David~E. Handelman, and Chao~Liang Shen.
\newblock Dimension groups and their affine representations.
\newblock {\em Amer. J. Math.}, 102(2):385--407, 1980.

\bibitem{ERS}
George~A. Elliott, Leonel Robert, and Luis Santiago.
\newblock The cone of lower semicontinuous traces on a {$C^*$}-algebra.
\newblock {\em Amer. J. Math.}, 133(4):969--1005, 2011.

\bibitem{GHK}
G.~Gierz, K.~H. Hofmann, K.~Keimel, J.~D. Lawson, M.~Mislove, and D.~S. Scott.
\newblock {\em Continuous lattices and domains}, volume~93 of {\em Encyclopedia
  of Mathematics and its Applications}.
\newblock Cambridge University Press, Cambridge, 2003.

\bibitem{goodearl-wehrung}
K.~R. Goodearl and F.~Wehrung.
\newblock Representations of distributive semilattices in ideal lattices of
  various algebraic structures.
\newblock {\em Algebra Universalis}, 45(1):71--102, 2001.

\bibitem{jacelon}
Bhishan Jacelon.
\newblock A simple, monotracial, stably projectionless $c^*$-algebra.
\newblock {\em J. Lond. Math. Soc. (2)}, 87(2):365--383, 2013.

\bibitem{lawson}
Jimmie~D. Lawson.
\newblock Embeddings of compact convex sets and locally compact cones.
\newblock {\em Pacific J. Math.}, 66(2):443--453, 1976.

\bibitem{lazar-lindenstrauss}
A.~J. Lazar and J.~Lindenstrauss.
\newblock Banach spaces whose duals are {$L\sb{1}$} spaces and their
  representing matrices.
\newblock {\em Acta Math.}, 126:165--193, 1971.

\bibitem{maloney-tikuisis}
Gregory~R. Maloney and Aaron Tikuisis.
\newblock A classification of finite rank dimension groups by their
  representations in ordered real vector spaces.
\newblock {\em J. Funct. Anal.}, 260(11):3404--3428, 2011.

\bibitem{pedersen}
Gert~K. Pedersen.
\newblock {\em {$C^*$}-algebras and their automorphism groups}.
\newblock Pure and Applied Mathematics (Amsterdam). Academic Press, London,
  2018.
\newblock Second edition of [ MR0548006], Edited and with a preface by S\o ren
  Eilers and Dorte Olesen.

\bibitem{rordam-cones}
Mikael R\o~rdam.
\newblock Fixed-points in the cone of traces on a {$C^{\ast}$}-algebra.
\newblock {\em Trans. Amer. Math. Soc.}, 371(12):8879--8906, 2019.

\bibitem{robertFS}
Leonel Robert.
\newblock The cone of functionals on the {C}untz semigroup.
\newblock {\em Math. Scand.}, 113(2):161--186, 2013.

\bibitem{ruzicka}
Pavel R\r{u}\v{z}i\v{c}ka.
\newblock A distributive semilattice not isomorphic to the maximal semilattice
  quotient of the positive cone of any dimension group.
\newblock {\em J. Algebra}, 268(1):290--300, 2003.

\bibitem{thiel}
Hannes Thiel.
\newblock Notes on algebraically ordered compact cones.
\newblock unpublished notes, 2018.

\bibitem{wehrungexample}
F.~Wehrung.
\newblock Semilattices of finitely generated ideals of exchange rings with
  finite stable rank.
\newblock {\em Trans. Amer. Math. Soc.}, 356(5):1957--1970, 2004.

\bibitem{wehrung}
Friedrich Wehrung.
\newblock Injective positively ordered monoids. {I}, {II}.
\newblock {\em J. Pure Appl. Algebra}, 83(1):43--82, 83--100, 1992.

\end{thebibliography}

\end{document}